 \newcommand{\linkdest}[2]{\Hy@raisedlink{\hypertarget{#1}{#2}}}
\definecolor{luh-dark-blue}{rgb}{0.0, 0.313, 0.608}
\numberwithin{equation}{section}
\newtheoremstyle{thmlemcorr}{10pt}{10pt}{\itshape}{}{\bfseries}{.}{10pt}{{\thmname{#1}\thmnumber{ #2}\thmnote{ (#3)}}}
\newtheoremstyle{thmlemcorr*}{10pt}{10pt}{\itshape}{}{\bfseries}{.}\newline{{\thmname{#1}\thmnumber{ #2}\thmnote{ (#3)}}}
\newtheoremstyle{remexample}{10pt}{10pt}{}{}{\bfseries}{.}{10pt}{{\thmname{#1}\thmnumber{ #2}\thmnote{ (#3)}}}
\newtheoremstyle{ass}{10pt}{10pt}{}{}{\bfseries}{.}{10pt}{{\thmname{#1}\thmnumber{ A#2}\thmnote{ (#3)}}}
\theoremstyle{thmlemcorr}
\newtheorem{theorem}{Theorem}
\numberwithin{theorem}{section}
\newtheorem{lemma}[theorem]{Lemma}
\newtheorem{corollary}[theorem]{Corollary}
\newtheorem{proposition}[theorem]{Proposition}
\newtheorem{definition}[theorem]{Definition}
\theoremstyle{thmlemcorr*}
\newtheorem*{theorem*}{Theorem}
\newtheorem{lemma*}[theorem]{Lemma}
\newtheorem{corollary*}[theorem]{Corollary}
\newtheorem{proposition*}[theorem]{Proposition}
\newtheorem{problem*}[theorem]{Problem}
\newtheorem{conjecture*}[theorem]{Conjecture}
\newtheorem{definition*}[theorem]{Definition}
\newtheorem{assumption*}[theorem]{Assumption}
\theoremstyle{remexample}
\newtheorem{remark}[theorem]{Remark}
\theoremstyle{ass}
\newcommand{\Fcal}{\mathcal{F}}
\newcommand{\Gcal}{\mathcal{G}}
\newcommand{\Hcal}{\mathcal{H}}
\newcommand{\Kcal}{\mathcal{K}}
\newcommand{\Lcal}{\mathcal{L}}
\newcommand{\Ncal}{\mathcal{N}}
\newcommand{\Ocal}{\mathcal{O}}
\newcommand{\Rcal}{\mathcal{R}}
\newcommand{\Scal}{\mathcal{S}}
\newcommand{\Ucal}{\mathcal{U}}
\newcommand{\Xcal}{\mathcal{X}}
\newcommand{\Ycal}{\mathcal{Y}}
\newcommand{\de}{\, \mathrm{d}}
\renewcommand{\div}{\operatorname{div}}
\DeclareMathOperator{\spn}{span}
\DeclareMathOperator{\ran}{ran}
\renewcommand{\Re}{\operatorname{Re}}
\renewcommand{\Im}{\operatorname{Im}}
\newcommand{\N}{\mathbb{N}}
\newcommand{\R}{\mathbb{R}}
\newcommand{\C}{\mathbb{C}}
\newcommand{\Z}{\mathbb{Z}}
\newcommand{\loc}{\mathrm{loc}}
\newcommand{\per}{\mathrm{per}}
\newcommand{\eps}{\varepsilon}
\DeclareMathOperator{\Id}{Id}
\def\div{\mathrm{div\,}}
\def\XXint#1#2#3{{\setbox0=\hbox{$#1{#2#3}{\int}$}
\vcenter{\hbox{$#2#3$}}\kern-.5\wd0}}
\renewcommand{\eps}{\varepsilon}
\renewcommand{\phi}{\varphi}
\begin{document}


\title[]{Thermocapillary Thin Films: Periodic Steady States and Film Rupture}

\author{Gabriele Bruell}
\address{\textit{Gabriele Bruell:} Centre for Mathematical Sciences, Lund University, P.O. Box 118, 221 00 Lund, Sweden}
\email{gabriele.brull@math.lth.se}

\author{Bastian Hilder}
\address{\textit{Bastian Hilder:}  Department of Mathematics, Technische Universität München, Boltzmannstraße 3, 85748 Garching b.~München, Germany}
\email{bastian.hilder@tum.de}

\author{Jonas Jansen}
\address{\textit{Jonas Jansen:}  Centre for Mathematical Sciences, Lund University, P.O. Box 118, 221 00 Lund, Sweden}
\email{jonas.jansen@math.lth.se}

\begin{abstract}
    We study stationary, periodic solutions to the thermocapillary thin-film model
    \begin{equation*}
    \partial_t h + \partial_x \Bigl(h^3(\partial_x^3 h - g\partial_x h) + M\frac{h^2}{(1+h)^2}\partial_xh\Bigr) = 0,\quad t>0,\ x\in \R,
    \end{equation*}
    which can be derived from the B\'enard--Marangoni problem via a lubrication approximation. When the Marangoni number \(M\) increases beyond a critical value \(M^*\), the constant solution becomes spectrally unstable via a (conserved) long-wave instability and periodic stationary solutions bifurcate. For a fixed period, we find that these solutions lie on a global bifurcation curve of stationary, periodic solutions with a fixed wave number and mass. Furthermore, we show that the stationary periodic solutions on the global bifurcation branch converge to a weak stationary periodic solution which exhibits film rupture. The proofs rely on a Hamiltonian formulation of the stationary problem and the use of analytic global bifurcation theory. Finally, we show 
    the instability of the bifurcating solutions close to the bifurcation point and give a formal derivation of the amplitude equation governing the dynamics close to the onset of instability. 
\end{abstract}
\vspace{4pt}

\maketitle

\noindent\textsc{MSC (2020): 35B36, 70K50, 35B32, 37G15, 35Q35, 35K59, 35K65, 35D30, 35Q79, 76A20, 35B10, 35B35}

\noindent\textsc{Keywords: thermocapillary instability, thin-film model, global bifurcation theory, film rupture, quasilinear degenerate-parabolic equation, stationary solutions}






\section{Introduction}

Instabilities in thin fluid films triggered by temperature gradients can lead to the formation of interesting nonlinear waves including pattern formation. Since the first experiments of B\'enard \cite{bénard1900,benard1901} in 1900 and 1901, understanding these instabilities has attracted a lot of attention. Pearson \cite{pearson1958} found that the instabilities and the resulting hexagonal patterns observed by B\'enard are due to a temperature dependence in the surface tension. Accordingly, these instabilities are called \emph{thermocapillary instabilities}.

The present manuscript is concerned with a thin fluid film located on a heated plane with a free top surface. In this setting two main effects can be observed. The first one is the emergence of periodic, hexagonal convection cells \cite{schatz1995}. Since these patterns have minimal surface deformations, this instability is called non-deformational instability. The second one is that spontaneous film rupture can occur, see \cite{vanhook1995,vanhook1997} for experimental results and \cite{krishnamoorthy1995} for a numerical study. This is referred to as a deformational instability.

As a first step to understand the deformational instability, we study the one-dimensional thermocapillary thin-film model
\begin{equation}\label{eq:thin-film-equation}
    \partial_t h + \partial_x \Bigl(h^3(\partial_x^3 h - g\partial_x h) + M\frac{h^2}{(1+h)^2}\partial_xh\Bigr) = 0,\quad t>0,\ x\in \R.
\end{equation}
Here, \(h=h(t,x)\) denotes the height of a thin fluid film on an impermeable heated flat plane. Additionally, \(g\) is a gravitational constant and \(M\) a scaled Marangoni number. Using a lubrication approximation, equation \eqref{eq:thin-film-equation} can formally be derived from the full B\'enard--Marangoni problem, that is the Navier--Stokes equations coupled with a transport-diffusion equation for the temperature, see Section \ref{sec:modelling} for details.

Beyond the physical importance, equation \eqref{eq:thin-film-equation} is also interesting from a pure mathematical perspective since it is a quasilinear fourth-order partial differential equation which is degenerate-parabolic in the film height.

The aim of this paper is a rigorous mathematical analysis of stationary solutions to \eqref{eq:thin-film-equation}. In particular, we study the existence and stability of stationary periodic solutions to \eqref{eq:thin-film-equation}. For Marangoni numbers beyond a critical value, we prove the global bifurcation of periodic stationary solutions, emerging from the spectrally unstable constant surface. Asympotically, the solutions on the global bifurcation branch converge to a perodic stationary solution to \eqref{eq:thin-film-equation} exhibiting film rupture. While this establishes the existence of static film-rupture solutions, the dynamical formation of film rupture remains an open problem.

\begin{figure}[h]
    \centering
    \includegraphics[width = 0.9\textwidth]{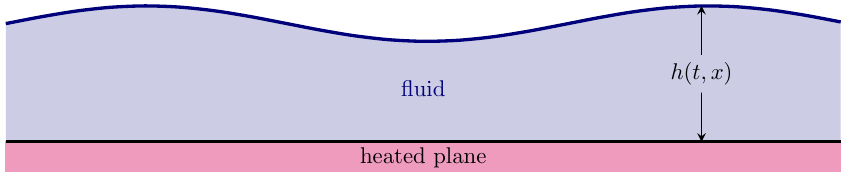}
    \caption{Thin fluid film resting on a heated plane.}
    \label{fig:thin-film}
\end{figure}

\subsection{Main results of the paper}

We summarise the main results and techniques of this paper. We prove that

\begin{itemize}
    \item there is a critical Marangoni number \(M^*=4g\) at which the steady state $\bar{h} = 1$ destabilizes and  for every \(k_0>0\) there exists a global bifurcation branch, bifurcating at \(M=M^* + 4k_0^2\), of $\tfrac{2\pi}{k_0}$-periodic stationary solutions to \eqref{eq:thin-film-equation} which are bounded in \(W^{2,p}_{\mathrm{per}}\) for all \(p\in [1,\infty)\) and the minimum approaches \(0\) along the bifurcation curve;
    \item for every $k_0 > 0$ there exist $M_\infty > 0$ and $h_\infty \in W^{2,p}_\mathrm{per}$ for all \(1\leq p <\infty\) which is a periodic stationary weak solution to \eqref{eq:thin-film-equation}  with wave number $k_0$ and satisfies $h_\infty\bigl(\pm \tfrac{\pi}{k_0}\bigr) = 0$;
    \item at the critical Marangoni number \(M^*\) the constant steady-state solution destabilises via a (conserved) long-wave instability, see Figure \ref{fig:spectral-curves}, and is spectrally unstable for \(M>M^*\). Furthermore, the bifurcating periodic solution is spectrally unstable close to the bifurcation point.
\end{itemize}

\begin{remark}
    Throughout the paper and without loss of generality, we study only solutions, which bifurcate from the constant steady state $\bar{h} = 1$.
\end{remark}

\noindent\textbf{\textsc{Global bifurcation of periodic steady states. }} Since the constant steady-state solution destabilises at $M^*$ via a (conserved) long-wave instability, we expect that periodic stationary solutions to \eqref{eq:thin-film-equation} with arbitrary period bifurcate from the constant state for $M > M^\ast$. We fix a wave number $k_0 > 0$ and study the existence of $\tfrac{2\pi}{k_0}$-periodic solutions using classical bifurcation theory, see e.g.~\cite{buffoni2003,kielhöfer2012}. The core point in our analysis is that, after two integrations, the stationary problem for \eqref{eq:thin-film-equation} can be written as a second-order differential equation
\begin{equation}\label{eq:integrated-eq-introduction}
    \partial_x^2 v = gv - M\Bigl(\frac{1}{2+v} + \log\Bigl(\frac{1+v}{2+v}\Bigr) \Bigr) + MK,
\end{equation}
with $v = h - 1$ and a constant of integration $K$. The constant $K$ changes along the bifurcation curve, which is used to guarantee that the mass over one period of $v$ is conserved. 

Most importantly for the subsequent analysis, it turns out that \eqref{eq:integrated-eq-introduction} has a Hamiltonian structure with Hamiltonian
\begin{equation*}
    \Hcal(v,w) = \frac{1}{2}w^2 - \frac{g}{2} v^2 + M(1+v)\log\Bigl(\frac{1+v}{2+v}\Bigr) - MKv,
\end{equation*}
where $w = v'$. Using this structure, the dynamics of \eqref{eq:integrated-eq-introduction} can be understood using phase-plane analysis by exploiting the fact that all orbits lie on the level sets of the Hamiltonian, see Lemma \ref{lem:orbits-Hamiltonian-system} and Figure \ref{fig:phase-space}. It turns out that the Hamiltonian system has two fixed points, a center and a saddle point. The main observation used in out analysis is that the center has a neighborhood of periodic orbits, which is enclosed by the level set of the saddle point, see Figure \ref{fig:phase-space}.

In particular, linearising the Hamiltonian system about $v = 0$, we obtain the eigenvalues $\lambda_\pm = \pm \sqrt{g-\tfrac{1}{4}M}$. Hence, at $M = M^*$ the two eigenvalues collide at $\lambda = 0$ and split up into two purely imaginary eigenvalues for $M > M^*$. Thus, due to the Lyapunov subcenter theorem, see e.g.~\cite[Thm. 4.1.8]{schneider2017}, we expect the bifurcation of $\tfrac{2\pi}{k_0}$-periodic solutions at $M^*(k_0) = M^* + 4k_0^2$.

We make this intuition rigorous by studying the bifurcation problem for $\tfrac{2\pi}{k_0}$-periodic, even solutions to \eqref{eq:integrated-eq-introduction}. It turns out that the restriction to even solutions removes the translation invariance and results in a one-dimensional kernel and the bifurcation of periodic solutions can be established by the Crandall--Rabinowitz theorem, see Theorem \ref{thm:local-bifurcation} for details.

We then extend the local curve to a global bifurcation branch using analytic global bifurcation theory, see Theorem \ref{thm:global-bifurcation} and Proposition \ref{prop:global-bifurcation}. Typically, the main challenge is then to study the behaviour of solutions along the bifurcation curve since the general global bifurcation theorem only establishes that at least one of the following alternatives must hold. The curve forms a closed loop, the solutions blow-up along the curve or the curve approaches the boundary of the phase space, see Proposition \ref{prop:global-bifurcation}.

The scenario that the global bifurcation curve forms a closed loop is ruled out by nodal properties (Proposition \ref{prop:no-closed-loop}). This refers to the observation that solutions on the global bifurcation curve are symmetric and increasing on $(-\tfrac{\pi}{k_0},0)$. Therefore, the minimal and maximal points of the solutions are invariant along the global bifurcation curve and the solutions form a nodal pattern, see Remark~\ref{rem:nodal-property}. This also establishes that the periodic solutions have a global minimum at $x = \pm \tfrac{\pi}{k_0}$. Furthermore, using phase-plane analysis of the Hamiltonian system and elliptic regularity theory, we prove that the solutions along the bifurcation curve stay bounded in $H^2_{\mathrm{per}}\times \R$, see Lemma \ref{lem:uniform-bound-M} and Proposition \ref{prop:uniform-bound-H2}. Therefore, we may conclude that the bifurcation curve approaches the boundary of the phase plane. In the present setting, this corresponds to the case that $v$ approaches $-1$. In view of $h = v+1$, it means that the global bifurcation curve of stationary, periodic solutions to \eqref{eq:thin-film-equation} approaches a film-rupture state, see Proposition \ref{prop:film-rupture}.

The qualitative structure of these results is presented in Figure \ref{fig:bifurcation-diagram} and the main result is summarised in Theorem \ref{thm:main-global-bifurcation} below.

\begin{figure}[H]
    \centering
        \includegraphics[width=0.8\textwidth]{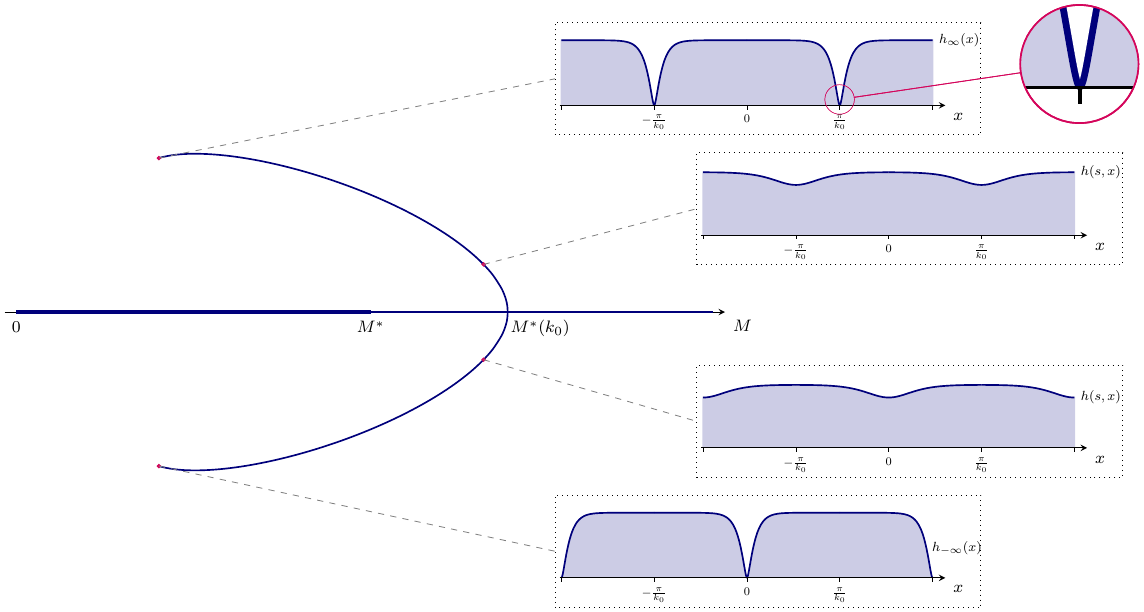}
    \caption{Schematic depiction of the bifurcation diagram of periodic, stationary solutions obtained in this paper. The constant solution is spectrally stable for $M<M^*$ and spectrally unstable for $M>M^*$. At $M^*(k_0)$, periodic solutions with wave number $k_0$ bifurcate subcritically from the constant state. Along the bifurcation curve the minimum of the periodic solution approaches zero.} 
    \label{fig:bifurcation-diagram}
\end{figure}

\begin{theorem}[Global bifurcation]\label{thm:main-global-bifurcation}
    Let \(k_0 > 0\) be a given wave number and \(M^*(k_0) = M^* + 4k_0^2\). 
    Then at \((v,M)=(0,M^\ast(k_0))\) a subcritical pitchfork bifurcation occurs and the corresponding bifurcation branch can be extended to a global bifurcation curve
    \begin{equation*}
        \{(v(s),M(s)) : s \in \R\} \subset \Ucal \times \R := \Bigl\{v\in H^2_{\mathrm{per}} \,:\, v\text{ even, }\int_{-\frac{\pi}{k_0}}^{\frac{\pi}{k_0}} v\de x = 0 \text{ and } v > -1\Bigr\} \times \R
    \end{equation*}
    of $\tfrac{2\pi}{k_0}$-periodic stationary solutions to \eqref{eq:thin-film-equation}.
    The bifurcation curve is uniformly bounded in $W^{2,p}_\mathrm{per}$ for all \(p\in [1,\infty)\) and does not return to the trivial curve.
    Moreover, film rupture occurs, that is $v(s)$ satisfies
    \begin{equation*}
        \inf_{s\in \R} \min_{x\in \bigl[-\tfrac{\pi}{k_0},\tfrac{\pi}{k_0}\bigr]}v(s) = -1.
    \end{equation*}
\end{theorem}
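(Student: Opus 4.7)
The plan is to recast the stationary problem, in the form of the twice-integrated equation~\eqref{eq:integrated-eq-introduction}, as a zero-finding problem for the analytic operator
\begin{equation*}
\Fcal(v,M) = \partial_x^2 v - gv + M\Bigl(\tfrac{1}{2+v} + \log\tfrac{1+v}{2+v}\Bigr) - MK(v)
\end{equation*}
on the open set $\Ucal$ of even, mean-zero $H^2_{\mathrm{per}}$ functions with $v>-1$. The integration constant $K(v)$ is slaved to $v$ via the mean-zero constraint and satisfies $K(0)=\tfrac{1}{2}-\log 2$, while the restriction to even functions kills the translation invariance of the underlying ODE. A direct computation shows that the Fréchet derivative at the trivial solution reduces to $\partial_x^2 + \tfrac{M}{4} - g$, so its kernel is one-dimensional and spanned by $\cos(k_0 x)$ exactly at $M^\ast(k_0) = M^\ast + 4k_0^2$, while $\partial_M\partial_v\Fcal|_{(0,M^\ast(k_0))}[\cos(k_0 x)]=\tfrac{1}{4}\cos(k_0 x)$ lies in the kernel and hence outside the range, providing the transversality condition. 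The Crandall--Rabinowitz theorem thus yields an analytic local branch, and the subcritical pitchfork character is determined from the second-order Taylor coefficient of the nonlinearity at $v=0$ by a Lyapunov--Schmidt expansion.

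\textbf{Global extension and exclusion of the loop alternative.} Since $\Fcal$ is analytic on $\Ucal\times\R$ and the linearisation is Fredholm of index zero, the analytic global bifurcation theorem of Buffoni--Toland extends the local curve to a maximal analytic branch $\{(v(s),M(s))\}_{s\in\R}$ satisfying one of three alternatives: closed loop, loss of compactness in $H^2_{\mathrm{per}}\times\R$, or approach to $\partial(\Ucal\times\R)$. Closed loops are excluded by nodal analysis: the phase-plane structure furnished by the Hamiltonian $\Hcal$ shows that each $v(s)$ corresponds to a periodic orbit around the centre with exactly one maximum and one minimum per period, and combined with evenness this forces strict monotonicity of $v(s)$ on $(-\pi/k_0,0)$ and locates the minimum at $x=\pm\pi/k_0$. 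These nodal properties are preserved along the analytic curve and rule out a return to the trivial branch, since any such return would have to occur at a bifurcation point with kernel spanned by $\cos(nk_0 x)$ for some $n\ge 2$, whose tangent direction is incompatible with the preserved monotonicity.

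\textbf{A priori bounds and film rupture.} To exclude the blow-up alternative I would use the Hamiltonian to bootstrap. A uniform bound $M(s)\le M_\infty$ (Lemma~\ref{lem:uniform-bound-M}) follows by evaluating $\Hcal$ at the two extrema of $v(s)$ and exploiting the resulting algebraic relation between the level of $\Hcal$, $M$, $K$, and the range of $v$. With $M$ controlled, the orbit identity
\begin{equation*}
\tfrac{1}{2}(v')^2 = \Hcal + \tfrac{g}{2}v^2 - M(1+v)\log\tfrac{1+v}{2+v} + MKv
\end{equation*}
supplies a pointwise bound on $v'$ as long as $v>-1$ pointwise, and together with an $L^\infty$ bound on $v$ obtained from the boundedness of the orbit in phase space this yields a uniform $W^{1,\infty}$ estimate. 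Inserting this into~\eqref{eq:integrated-eq-introduction} and using that $\log(1+v)\in L^p$ whenever $\{1+v=0\}$ has measure zero (automatic for $H^2$-functions) produces the uniform $W^{2,p}_{\mathrm{per}}$ bound for every $p<\infty$ (Proposition~\ref{prop:uniform-bound-H2}). Only the boundary alternative then remains, which reads $\inf_s\min_x v(s) = -1$; by the monotonicity above this infimum is attained at $x=\pm\pi/k_0$, which is precisely the film-rupture statement.

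The main technical obstacle is the last bootstrap: the logarithmic nonlinearity is singular exactly at the boundary $v=-1$ that the branch is approaching, so the $W^{2,p}$ estimate must not rely on a uniform distance from this set. The resolution uses the Hamiltonian identity above to control the rate at which $v'$ can grow as $v\downarrow -1$, ensuring that $\log(1+v)$ remains uniformly $L^p$-integrable along the branch and that the limiting profile is well-defined in every $W^{2,p}_{\mathrm{per}}$.
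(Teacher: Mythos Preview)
Your overall strategy --- Crandall--Rabinowitz for the local curve, Buffoni--Toland for the global extension, nodal properties in a cone to exclude the closed loop, then Hamiltonian-based a priori bounds to force the boundary alternative --- is exactly the paper's route, and the logic is sound.

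There is one genuine gap. Your sketch of the uniform bound $M(s)\le M_\infty$ via ``evaluating $\Hcal$ at the two extrema of $v(s)$ and exploiting the resulting algebraic relation'' cannot work as stated: the identity $\Gcal(v_{\min})=\Gcal(v_{\max})$ holds for \emph{every} periodic orbit of the planar Hamiltonian system, regardless of $M$, so by itself it cannot rule out $M\to\infty$. What singles out the orbits on the bifurcation branch is the \emph{fixed period} $2\pi/k_0$, and this is the constraint the paper actually exploits. After rescaling $\tilde\Hcal=\tfrac{1}{2}w^2+\Gcal(v)/M$, the potential $\Gcal/M$ converges uniformly on compacta to an $M$-independent convex function $\Gcal_\infty$, and one shows that the periods of the admissible orbits of $\tilde\Hcal$ are bounded by a constant $C$ independent of $M$; since $T_{\Hcal}=T_{\tilde\Hcal}/\sqrt{M}$, this forces $T_\Hcal\to 0$ as $M\to\infty$, contradicting $T_\Hcal=2\pi/k_0$. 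The lower bound $M(s)\ge M_l>0$ (which you do not mention but which is needed, for instance to make $\partial_x^2 v$ large near the minimum) and the lower bound on $K(s)$ are obtained by related phase-plane arguments: if either quantity degenerates, the largest $v$-value reached by any periodic orbit becomes negative, which is incompatible with the mean-zero constraint. None of this follows from the Hamiltonian level at the extrema alone.

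Two smaller points. First, your statement that the nodal property is ``preserved along the analytic curve'' is the heart of the no-loop argument and needs a mechanism; the paper supplies one by showing that if a nontrivial solution in $\Kcal$ were a limit of solutions outside $\Kcal$, there would be an interior point with $v'=v''=0$, and uniqueness for the differentiated ODE then forces $v\equiv 0$. Second, your parenthetical ``$\log(1+v)\in L^p$ whenever $\{1+v=0\}$ has measure zero'' is beside the point for the \emph{uniform} $W^{2,p}$ estimate, since each $v(s)$ has empty zero set; the issue is to control $\|\log(1+v(s))\|_{L^p}$ uniformly as $\min v(s)\to -1$. Your Hamiltonian-identity route for this can be made to work, but the paper's argument is more direct: reading $\partial_x^2 v\ge M|\log(1+v)|-C$ straight from the equation gives $\partial_x^2 v>2$ near the minimum once $M\ge M_l>0$, and Taylor's theorem then yields $1+v(x)\ge |x-x_{\min}|^2$, making $|\log(1+v)|^p$ uniformly integrable.
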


\begin{remark}
    The bifurcation curve in the schematic depiction in Figure \ref{fig:bifurcation-diagram} extends below $M^\ast$. This can indeed be verified numerically via continuation using pde2path, see \cite{uecker2014,uecker2021} and \cite{zotero-3112}. The numerical results are depicted in Figure \ref{fig:numerical-continuation-plots}. In particular, it suggests that stationary periodic solutions close to film rupture occur at $M < M^\ast$ for which the constant steady state is stable. However, an analytic proof of this observation is still open.
\end{remark}

\noindent\textbf{\textsc{Film rupture. }} Due to the uniform bounds obtained in the analysis of the global bifurcation branch, we obtain limit points $h_{\infty}\in W^{2,p}_{\mathrm{per}}$ for all $p \in [1,\infty)$ and $M_{\infty}>0$ of the branch. Furthermore, we know that $h_{\infty}$ exhibits film rupture and thus, if it is a periodic stationary solution to \eqref{eq:thin-film-equation}, its second derivative blows up at the film-rupture points. In particular, $h_\infty$ is not in $W^{2,\infty}_\mathrm{per}$. Hence, $h_\infty$ cannot be a classical solution to \eqref{eq:thin-film-equation} and in order to show that these limit points are still stationary solutions to \eqref{eq:thin-film-equation}, we introduce the following weak formulation.

\begin{definition}\label{def:weak-solution}
    A function \(h\in H^1_{\loc}(\R)\) with \(\partial_x^3 h \in L^2_{\loc}(\{h>0\})\) and \(h^3(\partial_x^3h-g\partial_x h) + M \frac{h^2}{(1+h)^2}\partial_x h \in L^2_{\loc}(\R)\) is a weak stationary solution to the thin-film equation \eqref{eq:thin-film-equation} if
\begin{equation*}
    \int_{\{h>0\}} \bigl(h^3(\partial_x^3h-g\partial_x h) + M \frac{h^2}{(1+h)^2}\partial_x h\bigr)\partial_x \phi \de x = 0
\end{equation*}
holds for all \(\phi\in H^1(\R)\) with compact support.
\end{definition}

In order to pass to the limit in the nonlinear flux $h^3(\partial_x^3 h - g\partial_x h) + M\tfrac{h^2}{(1+h)^2}\partial_xh$ of \eqref{eq:thin-film-equation}, we obtain uniform bounds on \(\partial_x^3 h(s)\) locally on the positivity set of \(h_{\infty}\). Combining this with the uniform bound on the nonlinear term provided by the equation, we show that \(h_{\infty}\) is a periodic weak stationary solution to \eqref{eq:thin-film-equation}. Moreover, $h_{\infty}^3\partial_x^3 h_{\infty}$ can be extended continuously into the set \(\{h_{\infty}=0\}\) which consists precisely of \(x=\pm \frac{\pi}{k_0} + 2j\tfrac{\pi}{k_0}\), \(j\in \Z\). We summarise this result in the following theorem (see also Theorem \ref{thm:weak-solutions} for more details).

\begin{theorem}[Film rupture]\label{thm:main-film-rupture}
    For all wave numbers $k_0 > 0$ exist \(M_{\infty} > 0\) and a function \(h_{\infty} \in W^{2,p}_{\mathrm{per}}\) for all \(p\in [1,\infty)\) with \(\partial_x^3 h_{\infty} \in L^2_{\loc}(\{h_{\infty} >0\})\) such that  \(h_{\infty}\) is a weak solution to the equation
    \begin{equation*}
        \partial_x\Bigl(h_{\infty}^3 \bigl(\partial_x^3 h_{\infty}-g\partial_x h_{\infty}\bigr) + M_{\infty}\frac{h_{\infty}^2}{(1+h_{\infty})^2}\partial_x h_{\infty}\Bigr) = 0
    \end{equation*}
    in the sense of Definition \ref{def:weak-solution}. 
    Moreover, \(h_{\infty}\) is even, periodic, non-decreasing in $[-\tfrac{\pi}{k_0},0)$, and \(h_{\infty}\geq 0\) with \(h_{\infty}(x) = 0\) precisely in \(x= \pm\tfrac{\pi}{k_0}+2j\tfrac{\pi}{k_0}\), \(j\in \Z\).
   The nonlinear flux
    \begin{equation*}
        h^3(\partial_x^3 h - g\partial_x h) + M\tfrac{h^2}{(1+h)^2}\partial_xh
    \end{equation*}
    can be continuously extended into the set $\{h_\infty = 0\}$ and vanishes everywhere.
\end{theorem}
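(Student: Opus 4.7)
\emph{Step 1 (Extraction of a limit).} Let $(v(s),M(s))_{s\in\R}$ be the global bifurcation curve of Theorem \ref{thm:main-global-bifurcation}. Since the curve is uniformly bounded in $W^{2,p}_\mathrm{per}\times\R$ for every $p\in[1,\infty)$, weak compactness together with the compact Sobolev embedding $W^{2,p}_\mathrm{per}\hookrightarrow C^{1,\alpha}_\mathrm{per}$ (for $\alpha<1-1/p$) produces a sequence $s_n\to\infty$ along which $v(s_n)\rightharpoonup v_\infty$ in $W^{2,p}_\mathrm{per}$, $v(s_n)\to v_\infty$ in $C^{1,\alpha}_\mathrm{per}$, and $M(s_n)\to M_\infty\in[0,\infty)$. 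Set $h_\infty:=1+v_\infty$. The $C^1$-convergence transfers evenness, periodicity, and monotonicity on $[-\tfrac{\pi}{k_0},0]$ from $v(s)$ to $v_\infty$. Combined with the film-rupture statement $\inf_s\min_x v(s)=-1$ and the fact that every $v(s)$ attains its minimum at $\pm\tfrac{\pi}{k_0}$, this yields $h_\infty\geq 0$, $h_\infty(\pm\tfrac{\pi}{k_0})=0$, and (by strict monotonicity) no other zeros in a period.

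\emph{Step 2 (Limiting equation and positivity of $M_\infty$).} Integrating \eqref{eq:integrated-eq-introduction} over one period and using $\int v(s)\de x=0$ gives
\begin{equation*}
    K(s)=\Bigl\langle \tfrac{1}{2+v(s)} + \log\bigl(\tfrac{1+v(s)}{2+v(s)}\bigr)\Bigr\rangle.
\end{equation*}
The uniform $C^1$-bound on $v(s)$ forces $1+v(s,x)$ to exceed a linear profile near the rupture points, which renders the logarithmic singularity integrable uniformly in $s$ and hence $K(s)$ uniformly bounded; along a further subsequence $K(s_n)\to K_\infty\in\R$. On every compact subset of $\{h_\infty>0\}$ we have $v_\infty\geq -1+\delta$ and therefore $v(s_n)\geq -1+\delta/2$ for $n$ large, so the right-hand side of \eqref{eq:integrated-eq-introduction} is smooth in $v$ and we may pass to the limit: $v_\infty$ solves \eqref{eq:integrated-eq-introduction} classically on $\{h_\infty>0\}$ with parameters $(M_\infty,K_\infty)$. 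Bootstrapping then yields $v_\infty\in C^\infty(\{h_\infty>0\})$, so $\partial_x^3 h_\infty\in L^2_\loc(\{h_\infty>0\})$. If $M_\infty=0$, the limit equation on $(-\tfrac{\pi}{k_0},\tfrac{\pi}{k_0})$ reduces to $\partial_x^2 v_\infty=g v_\infty$, whose only even continuous solution is $v_\infty(x)=A\cosh(\sqrt g\,x)$; the boundary condition $v_\infty(\pm\tfrac{\pi}{k_0})=-1$ forces $A<0$, whereas $\int v_\infty\de x=0$ forces $A=0$. This contradiction gives $M_\infty>0$.

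\emph{Step 3 (Vanishing of the flux and weak formulation).} Differentiating \eqref{eq:integrated-eq-introduction} and multiplying by $h_\infty^3$ on $\{h_\infty>0\}$ yields $\partial_x\Fcal=0$, where
\begin{equation*}
    \Fcal := h_\infty^3\bigl(\partial_x^3 h_\infty - g\partial_x h_\infty\bigr) + M_\infty\tfrac{h_\infty^2}{(1+h_\infty)^2}\partial_x h_\infty.
\end{equation*}
Thus $\Fcal$ is constant on the positivity component $(-\tfrac{\pi}{k_0},\tfrac{\pi}{k_0})$. Evenness and smoothness of $h_\infty$ at $x=0$ imply $\partial_x h_\infty(0)=\partial_x^3 h_\infty(0)=0$, hence $\Fcal(0)=0$ and so $\Fcal\equiv 0$ on $\{h_\infty>0\}$. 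This single identity simultaneously provides the continuous extension of $\Fcal$ by $0$ across the rupture set, places $\Fcal$ in $L^2_\loc(\R)$ (it equals $0$ almost everywhere), and makes the integral identity of Definition \ref{def:weak-solution} trivial for every $\phi\in H^1(\R)$ with compact support.

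\emph{Main obstacle.} The delicate point is the uniform control of the parameters $M(s)$ and $K(s)$ as the phase-space orbits approach the saddle separatrix of the Hamiltonian $\Hcal$; the integrability of the logarithmic singularity combined with the Hamiltonian structure of \eqref{eq:integrated-eq-introduction} handles this. In contrast, what would normally be the principal difficulty---passing to the limit in a singular fourth-order PDE---becomes essentially automatic, because the enforced evenness makes the stationary flux $\Fcal$ vanish identically on the positivity set, bypassing any need for strong compactness of $\partial_x^3 v(s_n)$ near the rupture points.
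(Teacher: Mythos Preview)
The overall shape tracks the paper, but Step~2 contains a concrete gap that propagates.

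Your bound on $K(s)$ rests on the claim that the uniform $C^1$-bound forces $1+v(s,x)$ to exceed a linear profile near $x_{\min}=\pm\pi/k_0$. This fails: since $v(s)$ is even and $\tfrac{2\pi}{k_0}$-periodic, $\partial_x v(s)(x_{\min})=0$, so a $C^1$-bound by itself gives no lower growth rate there. The paper instead obtains a \emph{quadratic} lower bound $1+v(s,x)\geq |x-x_{\min}|^2$ by using the ODE to show $\partial_x^2 v(s)\geq 2$ on a uniform neighbourhood of $x_{\min}$ for large $s$ (Proposition~\ref{prop:uniform-bound-H2}); alternatively, $K(s)$ is bounded by a pure phase-plane argument that has nothing to do with the profile (Lemma~\ref{lem:uniform-bound-M}). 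This matters downstream in two places. First, your proof of $M_\infty>0$ tacitly uses $M(s_n)K(s_n)\to 0$; without it the limit equation is $\partial_x^2 v_\infty = gv_\infty + c_\infty$, and the constraints you actually invoke --- evenness, $v_\infty(\pm\pi/k_0)=-1$, mean zero --- do \emph{not} force a contradiction (one checks that $A\cosh(\sqrt g\,x)-c_\infty/g$ solves all three with $A=-1/(\cosh y-\tfrac{\sinh y}{y})\neq 0$, $y=\sqrt g\,\pi/k_0$). The missing ingredient is $\partial_x v_\infty(\pm\pi/k_0)=0$, which does follow from $v_\infty\in C^1$ together with even-periodicity but which you do not use. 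Second, the assertion that the rupture set reduces to $\{\pm\pi/k_0\}$ ``by strict monotonicity'' is unjustified: the $C^1$-limit is a priori only non-decreasing, and ruling out a flat interval at height $-1$ is done in the paper via the $K$-bound and Fatou's lemma.

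Your Step~3, by contrast, streamlines the paper. The paper proves uniform $L^2_{\loc}$-bounds on $\partial_x^3 v(s_n)$ over compact subsets of $\{h_\infty>0\}$ (Lemma~\ref{lem:uniform-bound-third-derivative}) so as to pass to the limit in the flux along the sequence. You avoid this by working directly with the limit: once $v_\infty$ satisfies the integrated equation on $\{h_\infty>0\}$, differentiating and multiplying by $h_\infty^3$ yields $\Fcal\equiv 0$ there outright --- in fact the resulting identity is precisely $\Fcal=0$, so even your evenness step is superfluous. This is a genuine shortcut.
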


\noindent\textbf{\textsc{Instability of periodic stationary states. }} Eventually, we discuss the stability of the periodic solutions close to their bifurcation point. This relies on the observation that for $M > M^*$ the constant steady-state solution $\bar{h} = 1$ to \eqref{eq:thin-film-equation} is spectrally unstable. By perturbation arguments we obtain spectral instability in $L^2(\R)$ of periodic solutions, which are sufficiently small. We refer to Section \ref{sec:stability} and Theorem \ref{thm:instability} for details.

\begin{theorem}[Instability]
   Let \(k_0>0\) and 
    \begin{equation*}
        \{(v(s),M(s)) : s \in (-\eps,\eps)\} \subset \Ucal \times \R
    \end{equation*}
    the bifurcation branch obtained in Theorem \ref{thm:main-global-bifurcation}. Then, there exists \(s_0>0\) such that \(h(s) = 1+v(s)\) is a $L^2$-spectrally unstable solution to \eqref{eq:thin-film-equation} for every \(|s|<s_0\).
\end{theorem}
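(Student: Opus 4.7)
The plan is to exploit the spectral instability of the trivial state $\bar{h} = 1$ at $M = M(s)$, which persists for $|s|$ small since $M$ is continuous along the bifurcation curve and $M(0) = M^*(k_0) > M^*$. The main tool is a Bloch--Floquet decomposition combined with analytic perturbation theory.

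First, I would linearise \eqref{eq:thin-film-equation} around $h(s) = 1 + v(s)$ to obtain a fourth-order linear operator $L_s$ acting on $L^2(\R)$ with $\tfrac{2\pi}{k_0}$-periodic coefficients. Since $v(0) = 0$ and $v(s) \to 0$ in $W^{2,p}_{\per}$ as $s \to 0$ (by Theorem \ref{thm:main-global-bifurcation}), the film height $h(s)$ stays uniformly bounded away from $0$ for small $|s|$, so $L_s$ is uniformly elliptic of order four and the family $s \mapsto L_s$ is analytic in an appropriate operator-norm topology near $s = 0$.

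The key step is to apply the Bloch--Floquet transform, giving
\begin{equation*}
    \sigma_{L^2(\R)}(L_s) = \bigcup_{\xi \in [-k_0/2,\,k_0/2]} \sigma(L_{s,\xi}),
\end{equation*}
where each Bloch operator $L_{s,\xi}$ acts on $\tfrac{2\pi}{k_0}$-periodic functions with compact resolvent, hence purely discrete spectrum. At $s = 0$, a direct computation using $M^*(k_0) = 4g + 4k_0^2$ gives the constant-coefficient operator $L_0 = -\partial_x^4 - k_0^2 \partial_x^2$, so that $L_{0,\xi}$ is diagonalised by the Fourier basis $\{e^{i(nk_0+\xi)x}\}_{n \in \Z}$ with eigenvalues
\begin{equation*}
    \lambda_n(\xi) = (nk_0 + \xi)^2 \bigl(k_0^2 - (nk_0 + \xi)^2\bigr).
\end{equation*}
Fixing any $\xi_* \in (0, k_0/2)$ yields $\lambda_0(\xi_*) = \xi_*^2 (k_0^2 - \xi_*^2) > 0$, which is a simple, isolated, strictly positive eigenvalue of $L_{0,\xi_*}$. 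By Kato--Rellich analytic perturbation theory applied to the family $s \mapsto L_{s,\xi_*}$, this eigenvalue extends to an analytic branch $s \mapsto \mu(s)$ of eigenvalues of $L_{s,\xi_*}$, and by continuity $\Re \mu(s) > 0$ for all $|s| < s_0$ with $s_0$ sufficiently small. Inserting back into the Bloch decomposition shows $\mu(s) \in \sigma_{L^2(\R)}(L_s)$, proving that $h(s)$ is $L^2$-spectrally unstable.

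The main obstacle is the translation invariance of \eqref{eq:thin-film-equation} together with the mass constraint, which together force $L_{s,0}$ to possess a (possibly double) zero eigenvalue along the entire branch. A direct perturbation argument at $\xi = 0$ would have to track these neutral modes carefully, but this degeneracy is entirely bypassed by choosing a Bloch parameter $\xi_* \neq 0$ within the unstable band $(0, k_0)$ provided by the long-wave instability. A secondary technical point is the rigorous justification of the Bloch--Floquet decomposition and the operator-analyticity of $s \mapsto L_{s,\xi_*}$ for our non-self-adjoint but uniformly elliptic fourth-order operator with bounded periodic coefficients; this follows from standard theory once the regularity of $v(s)$ provided by Theorem \ref{thm:main-global-bifurcation} is invoked.
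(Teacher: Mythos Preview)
Your argument is correct but follows a genuinely different route from the paper. The paper linearises about $h_{\mathrm{per}}$ to obtain $\Lcal_{\mathrm{per},M}$, writes it as $\Lcal_M + s\Rcal$ where $\Lcal_M$ is the constant-coefficient linearisation about $\bar h=1$ at the Marangoni number $M=M(s)$, verifies that $\Rcal$ is $\Lcal_{\mathrm{per},M}$-bounded uniformly in $s$, and then appeals directly to Kato's abstract perturbation results on the gap metric between closed operators (\cite[Thms.~IV.1.1, IV.2.17, IV.3.1]{kato1995}) to conclude that the interval of positive $L^2(\R)$-spectrum of $\Lcal_M$ cannot entirely disappear from $\sigma_{L^2(\R)}(\Lcal_{\mathrm{per},M})$ for $s$ small. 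No Bloch--Floquet decomposition and no eigenvalue tracking enter.

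Your approach instead reduces via Bloch--Floquet to a family of operators with compact resolvent and then applies Kato--Rellich to a single, explicitly identified simple eigenvalue $\lambda_0(\xi_*)=\xi_*^2(k_0^2-\xi_*^2)>0$ at a fixed Bloch parameter $\xi_*\in(0,k_0/2)$. This is more constructive: you actually produce an unstable Bloch mode and show it persists, and your choice $\xi_*\neq 0$ neatly sidesteps the neutral modes from translation and mass conservation. The cost is the additional machinery (Bloch transform, verification of simplicity and analyticity of $s\mapsto L_{s,\xi_*}$), whereas the paper's argument is shorter and treats the spectrum as a whole without ever isolating an eigenvalue. Both are standard and equally valid here.
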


\subsection{Physical background and modelling}\label{sec:modelling}

In this section, we present the derivation of equation \eqref{eq:thin-film-equation} using a lubrication approximation and starting from a Navier--Stokes system, see also \cite{nepomnyashchy2012} or \cite{nazaripoor2018}.

We consider an incompressible, viscous, Newtonian fluid resting on a impermeable heated plane. We assume that our film is homogeneous in one horizontal direction and that that the free surface is described by the graph of a function \(h=h(t,x)\) such that the fluid domain at time \(t>0\) is given by
\begin{equation*}
    \{(t,x,z) \, : \, x\in \R, \ 0 < z < h(t,x)\}.
\end{equation*}

Thermal variations in the fluid change its density due to buoyancy effects. In the Boussinesq approximation, one then models the density \(\rho(t,x)\) of the fluid to depend linearly on the temperature variations
\begin{equation*}
    \rho(t,x) = \rho_0 - \beta T(t,x),
\end{equation*}
where \(\beta >0\) is the coefficient of thermal expansion. Typically for thin films, these density variations are assumed to be very small and dominated by surface-tension effects, see e.g. \cite{vanhook1995}. Thus, in the following we will set \(\beta=0\). Hence, the density of the fluid is constant and we set \(\rho=\rho_0=1\). 

Under these assumptions, the equations of motion for the fluid are given by the Navier--Stokes system coupled to a transport-diffusion equation for the distribution of heat through the fluid
\begin{equation}\label{eq:Navier-Stoks-dim}
    \arraycolsep=2pt
    \def\arraystretch{1.2}
    \left\{
    \begin{array}{rcl}
        \partial_t u + (u\cdot\nabla)u & = & - \nabla p + \mu \Delta u - ge_z,  \\
        \div u & = & 0,   \\
        \partial_t T + (u\cdot \nabla) T & = & \chi \Delta T,
    \end{array}
    \right.
\end{equation}
with $t > 0$, $x \in \R$ and $0 < z < h(t,x)$. Here, \(u=(u_1(t,x,z),u_2(t,x,z))\) denotes the velocity field of the fluid, \(p=p(t,x,z)\) is the pressure, and \(T=T(t,x,z)\) the temperature. Furthermore, \(\mu>0\) is the kinematic viscosity, \(g>0\) the gravity of Earth, and \(\chi >0\) the thermal diffusivity. Eventually, \(e_z\) is the unit normal vector in \(z\)-direction.

We introduce the following boundary conditions for the fluid and the temperature. At the free surface, we assume a stress-balance condition and a kinematic boundary condition
\begin{equation}\label{eq:upper-boundary-fluid-dim}
    \arraycolsep=2pt
    \def\arraystretch{1.2}
    \left\{
    \begin{array}{rcl}
        \Sigma(p,u) n & = & \sigma \kappa n - (\partial_x \sigma) \tau, \\
        \partial_t h + u_1\partial_x h & = & u_2, 
    \end{array}
    \right.
\end{equation}
where the latter ensures that fluid particles on the surface stay on the surface. Here, \(\Sigma(p,u) = -p\Id + \mu \frac{\nabla u + \nabla u^T}{2}\) denotes the Cauchy stress tensor, \(\sigma=\sigma(T)\) denotes the surface tension, \(\kappa\) is the curvature of the free surface, \(\tau\) is the tangent vector, and \(n\) the outer unit normal at the free surface, which are given by
\begin{equation*}
    \kappa=\frac{\partial_x^2 h}{(1+|\partial_x h|^2)^{3/2}}, \quad \tau = \frac{1}{\sqrt{1+|\partial_x h|^2}}(1,\partial_xh), \quad n =  \frac{1}{\sqrt{1+|\partial_x h|^2}}(-\partial_xh,1).
\end{equation*}
Notice that the surface tension is not constant, but depends on the temperature.

At the solid bottom, we assume a no-slip boundary condition, that is
\begin{equation}\label{eq:lower-boundary-fluid-dim}
    u = 0, \quad t>0,\ x\in \R,\ z=0.
\end{equation}
For the temperature, we assume that at the surface holds
\begin{equation}\label{eq:upper-boundary-temp-dim}
    k \nabla T \cdot n = -K(T-T_g),\quad t>0,\ x\in \R,\ z=h(t,x),
\end{equation}
where \(k>0\) is the heat conductivity of the fluid, \(K>0\) is the heat exchange coefficient, and \(T_g\) the characteristic temperature of the surrounding gas. Moreover, at the solid bottom, we assume that
\begin{equation}\label{eq:lower-boundary-temp-dim}
    T = T_s,\quad t>0,\ x\in \R,\ z=0.
\end{equation}

Studying a thin fluid film, we assume that the surface profile varies on long spatial scales compared to the height of the film. We rescale the non-dimensional variables and perform a so-called lubrication approximation (or long-wave approximation). We assume that the typical horizontal length scale \(L=\tfrac{H}{\eps}\) with $0 < \eps \ll 1$ is very large compared to the vertical length scale \(H\). In order to account for the capillary dynamics, the time variable is rescaled of order \(\eps^2\). Then, the horizontal and vertical velocities are of order  \(\eps U_1\) and \(\eps^2 U_1\), respectively. The rescaled variables are
\begin{equation*}
\begin{split}
    \tilde{x} = \frac{\eps x}{H}, \quad \tilde{z} = \frac{z}{H}, \quad \tilde{t} = \frac{\eps^2}{t}{t_0}, \quad \tilde{u}_1 = \frac{u_1}{\eps U_1}, \quad \tilde{u}_2 = \frac{u_2}{\eps^2 U_1}, \quad \tilde{p} = \frac{p}{P}, \quad \tilde{T} = \frac{T}{\theta},\quad P = \frac{\mu U_1}{H},
\end{split}
\end{equation*}
where \(H,U_1,t_0,P\) and \(\theta\) are of order one. We also rescale \(\tilde{T}_s = \tfrac{T_s}{\theta}\) and \(\tilde{T}_g = \tfrac{T_g}{\theta}\).

Dropping the tildes, we find that the bulk equations \eqref{eq:Navier-Stoks-dim} in non-dimensional variables are given by 
\begin{equation}\label{eq:bulk-equations-non-dim}
    \arraycolsep=2pt
    \def\arraystretch{1.2}
    \left\{
    \begin{array}{rcll}
         \mathrm{Re}\eps^3(\partial_t u_1 + u_1\partial_x u_1 + u_2\partial_z u_1) & = & -\eps \partial_x p + \eps^3 \partial_x^2 u_1 +  \eps \partial_z^2 u_1, \\
         \mathrm{Re}\eps^4(\partial_t u_2 + u_1\partial_x u_2 + u_2 \partial_z u_2) & = & -\partial_z p + \eps^4 \partial_x^2 u_2 + \eps^2 \partial_z^2 u_2 - \mathrm{Ga}, \\
         \partial_x u_1 + \partial_z u_2 & = & 0,\\
         \eps^2 (\partial_t T + (u\cdot \nabla) T) & = & \frac{1}{\mathrm{Pr}} \left( \eps^2\partial_x^2 T + \partial_z^2 T\right)
    \end{array}
    \right.
\end{equation}
for \(t>0\), \(x\in \R\) and \(0<z<h(t,x)\).

We assume that the surface tension decreases linearly with temperature, that is there exists \(\alpha >0\) such that
\begin{equation*}
    \sigma = \sigma_0 - \alpha T.
\end{equation*}
Then, the boundary conditions \eqref{eq:upper-boundary-fluid-dim}--\eqref{eq:lower-boundary-temp-dim} in non-dimensional variables are given by
\begin{equation}\label{eq:boundary-conditions-non-dim}
\arraycolsep=2pt
\def\arraystretch{1.2}
\left\{
\begin{array}{rcll}
    -p + \frac{1}{1+\eps^2 |\partial_xh|^2} \bigl(\eps^2 |\partial_xh|^2 \partial_x u_1 - \eps^2\partial_xh(\partial_z u_1 + \eps^2 \partial_xu_2) + \eps^2 \partial_z u_2\bigr) & = & \frac{1}{\mathrm{Ca}}\frac{\partial_x^2h}{(1+\eps^2|\partial_xh|^2)^{\frac{3}{2}}}, \\
    \frac{1}{1+\eps^2|\partial_xh|^2}\bigl(\eps \partial_z u_1 + \eps^3(\partial_xu_2-|\partial_xh|^2 \partial_z u_1) + \eps^5 |\partial_xh|^2 \partial_xu_2\bigr) & = & -\eps \frac{\mathrm{Ma}}{\mathrm{Pr}} \partial_x T,\\ 
    \partial_t h + u_1 \partial_x h & = & u_2,\\
     \frac{1}{\sqrt{1+\eps^2|\partial_x h|^2}}\bigl(-\eps^2 \partial_x h\partial_x T + \partial_z T\bigr) & = & -\mathrm{Bi}(T-T_g), \\
\end{array}
\right.
\end{equation}
at the free boundary \(z=h(t,x)\). At the bottom $z=0$ we have that $u = 0$ and $T = T_s$.

In \eqref{eq:bulk-equations-non-dim} and \eqref{eq:boundary-conditions-non-dim}, the non-dimensional constants are defined as
\begin{equation*}
\begin{split}
    \text{Reynolds number: } & \mathrm{Re} = \frac{U_1 H}{\mu}, \qquad& 
    \text{Galileo number: }& \mathrm{Ga} = \frac{g H^2}{\mu U_1},\\
    \text{Prandtl number: } & \mathrm{Pr} = \frac{U_1H}{\chi}, &
    \text{Capillary number: } & \mathrm{Ca} = \frac{\mu U_1}{\sigma \eps^2},\\
    \text{Marangoni number: } & \mathrm{Ma} = \frac{2\alpha \theta H}{\mu\chi}, &
    \text{Biot number: } & \mathrm{Bi} = \frac{KH}{k}. \\
\end{split}
\end{equation*}

Furthermore, since we assume the capillary effects to dominate, the capillary number \(\mathrm{Ca}\) is of order one, that is $\sigma$ is of order $\varepsilon^{-2}$. Additionally, we assume that the Marangoni number is of order one, which implies that $\alpha$ is also of order one. Hence, variations of the surface tensions due to the temperature will be very small.

Now, we perform the lubrication approximation by keeping only the equations of lowest order in \(\eps\) in \eqref{eq:bulk-equations-non-dim} and \eqref{eq:boundary-conditions-non-dim}. That is, we obtain the following approximative system of equations for the bulk of the fluid
\begin{equation}\label{eq:bulk-non-dim}
    \arraycolsep=2pt
    \def\arraystretch{1.2}
    \left\{
    \begin{array}{rcll}
        \partial_x p & = & \partial_z^2 u_1, \\
        \partial_z p & = & -\mathrm{Ga}, \\
        \partial_x u_1 + \partial_z u_2 & = & 0, \\
        \partial_z^2T & = & 0
    \end{array}
    \right.
\end{equation}
and for the boundary conditions at the boundary \(z=h(t,x)\)
\begin{equation}\label{eq:boundary-non-dim}
    \arraycolsep=2pt
    \def\arraystretch{1.2}
    \left\{
    \begin{array}{rcll}
        -p & = & \frac{1}{\mathrm{Ca}} \partial_x^2 h, \\
        \partial_z u_1 & = & - \frac{\mathrm{Ma}}{\mathrm{Pr}} \partial_x T,\\
        \partial_t h + u_1\partial_x h & = & u_2, \\
        \partial_z T & = & - \mathrm{Bi}(T-T_g).
    \end{array}
    \right.
\end{equation}

Using the equations for the temperature and the boundary condition at the bottom, we find that 
\begin{equation*}
    T = T_s + \frac{\mathrm{Bi} (T_g - T_s)}{1 + \mathrm{Bi} h}z
\end{equation*}
and hence
\begin{equation*}
    \partial_x T|_{z=h} = - \frac{\mathrm{Bi}(T_s-T_g)}{(1+\mathrm{Bi}h)^2} \partial_x h.
\end{equation*}

Due to the incompressibility condition \eqref{eq:bulk-equations-non-dim}.3, the kinematic boundary condition \eqref{eq:boundary-non-dim}.3 can be written as
\begin{equation}\label{eq:integrated-kinematic}
    \partial_t h + \partial_x \int_{0}^{h} u_1 \de z = 0.
\end{equation}
In view of \eqref{eq:bulk-non-dim}.2 and \eqref{eq:boundary-non-dim}.1, the pressure is given by
\begin{equation*}
    p = - \mathrm{Ca}^{-1} \partial_x^2 h + \mathrm{Ga} (h-z).
\end{equation*}
Using the no-slip boundary condition at the solid bottom \eqref{eq:bulk-non-dim}.1 and \eqref{eq:boundary-non-dim}.2, the horizontal velocity is then given by
\begin{equation*}
    u_1 = \bigl(\mathrm{Ca}^{-1} \partial_x^3 h - \mathrm{Ga}\partial_x h\bigr)\bigl(hz - \tfrac{1}{2}z^2\bigr) + \frac{\mathrm{Ma}}{\mathrm{Pr}} \frac{\mathrm{Bi}(T_s-T_g)}{(1+\mathrm{Bi}h)^2} \partial_x h z.
\end{equation*}
Substituting this into \eqref{eq:integrated-kinematic} and evaluating the integral, we obtain the equation
\begin{equation*}
    \partial_t h + \partial_x \left(\tfrac{1}{3} h^3 \bigl(\mathrm{Ca}^{-1} \partial_x^3 h - \mathrm{Ga}\partial_x h\bigr)  +\frac{\mathrm{Ma}}{\mathrm{Pr}} \frac{\mathrm{Bi}(T_s-T_g)h^2}{(1+\mathrm{Bi}h)^2} \partial_x h  \right) = 0,
\end{equation*}
which is \eqref{eq:thin-film-equation} by setting \(\mathrm{Bi}=1\), \(\mathrm{Ca}^{-1} =3\), \(\mathrm{Ga}=3g\), and \(\tfrac{\mathrm{Ma}(T_s-T_g)}{\mathrm{Pr}}=M\).

\subsection{Related results}

We give a brief overview of related results in the mathematical and physical literature that are related to our problem.

\noindent\textbf{\textsc{Related results for thermocapillary thin-film models. }} While, to the best of our knowledge, there exist no previous rigorous analytical mathematical results on thermocapillary thin-film models, there is a vast literature on physical and numerical results for the model studied in this paper, as well as for many related models.

Equation \eqref{eq:thin-film-equation} has been studied numerically in \cite{thiele2004}. There, the same instability and bifurcating periodic solutions have been investigated numerically. The two-dimensional version of equation \eqref{eq:thin-film-equation} has been studied numerically in \cite{oron2000} and \cite{nazaripoor2018} for Newtonian fluids, and in \cite{mohammadtabar2022} for non-Newtonian fluids.

The amplitude equations which arise at the onset of instability in equation \eqref{eq:thin-film-equation} and which are derived in Section \ref{sec:stability} and Appendix \ref{sec:appendix} have been previously studied in \cite{nepomnyashchii1990,nepomnyashchy1994,nepomnyashchy2015,funada1987}.

While this paper considers the deformational model given by equation \eqref{eq:thin-film-equation}, in different parameter regimes a non-deformational model has been derived in \cite{sivashinsky1982}. This models the physical system purely in terms of the temperature in a fixed fluid domain. A coupled model of equations for both the film height and temperature is derived by long-wave approximation in \cite{shklyaev2012} and \cite{samoilova2015}. In this coupled model, a Turing (or monotonic) and Turing--Hopf (or oscillatory) instability is observed. The emergence of the same instability in the full B\'enard-Marangoni problem has recently been verified by \cite{samoilova2014}. In \cite{batson2019}, a thin liquid film on a thick substrate has been studied numerically via a nonlocally coupled system. Problem \eqref{eq:thin-film-equation} has been studied for non-uniform heating in \cite{yeo2003}. Finally, there also exists a vast literature on multi-layer thermocapillary models out of which we mention \cite{nepomnyashchy1997,nepomnyashchy2007,nepomnyashchy2012a}.

For more details on the physics and modelling of thermocapillary fluid films, we refer the reader to the review paper \cite{oron1997}. We also mention the recent review \cite{witelski2020}. There, film rupture on hydrophobic surfaces is investigated. Notice that this model is very different to the model presented here as the main driving forces there are van der Waals forces.

\noindent\textbf{\textsc{Related results in global bifurcation. }} Global bifurcation has been a powerful tool to investigate patterns in many fluid models. 
We would like to point out that the thin-film equation \eqref{eq:thin-film-equation} can be written as a generalised Cahn--Hilliard model
\begin{equation*}
    \partial_t h + \partial_x\bigl(h^3 \partial_x(\partial_x^2 h- f(h))\bigr) = 0 \quad \text{with}\quad f(h) = gh - M\Bigl(\frac{1}{1+h} + \log\bigl(\frac{h}{1+h}\bigr)\Bigr),
\end{equation*}
see \eqref{eq:steady-state-equation}. Global bifurcation results for the Cahn--Hilliard equation have been previously obtained in \cite{kielhöfer1997,healey2015}.

Furthermore, fingering patterns in the Muskat problem, which models a two-layer fluid problem with a denser top fluid, have been obtained via a global bifurcation technique in \cite{ehrnström2013}.

Eventually, in water-wave models, global bifurcation results have been used to verify Whitham's conjecture of the existence of highest traveling waves with a sharp crest, see \cite{ehrnström2019}, and  \cite{bruell2021a, hildrum2023a} for related model equations.

\subsection{Outline of the paper}

The structure of the present paper is as follows: in Section \ref{sec:Hamiltonian} we introduce the second-order equation for the stationary solutions and derive its Hamiltonian structure. Via phase-plane analysis, we then classify the steady states. 

In Section \ref{sec:local}, we provide the functional analytic setup, perform a local bifurcation analysis, and include the asymptotics of the bifurcation curve close to the bifurcation point.

In Section \ref{sec:global} we use analytic global bifurcation theory to extend the bifurcation curve. Moreover, we rule out the possibility of the curve to be a closed loop and show that the curve approaches a state of film rupture.

We continue the analysis of the global bifurcation curve in Section \ref{sec:film-rupture}, where we provide uniform bounds on the global bifurcation curve and identify the limit points as periodic weak stationary solutions to \eqref{eq:thin-film-equation} exhibiting film rupture.

We study the stability and instability of the constant steady state in Section \ref{sec:stability} and show that at the onset of the bifurcation the bifurcation curve consists of spectrally unstable periodic solutions. Furthermore, we demonstrate that close to the critical Marangoni number $M^*$ a (conserved) long-wave instability occurs and the dynamics of \eqref{eq:thin-film-equation} is formally captured by a Sivashinsky equation. We give a formal derivation of this amplitude equation in Appendix \ref{sec:appendix}.


\section{Classification of steady states and the Hamiltonian system}\label{sec:Hamiltonian}

In this section, we study positive steady states of equation \eqref{eq:thin-film-equation}. First, we derive a second-order equation for the steady-state problem to \eqref{eq:thin-film-equation} and analyse its linearisation. Then we show that the second-order equation is a Hamiltonian system which we study via a phase-plane analysis to classify the stationary solutions to \eqref{eq:thin-film-equation}.

Stationary solutions to \eqref{eq:thin-film-equation} satisfy the fourth-order ordinary differential equation
\begin{equation*}
    \partial_x\Bigl(h^3(\partial_x^3 h - g\partial_x h) + M \dfrac{h^2}{(1+h)^2}\partial_x h \Bigr) = 0.
\end{equation*}
Integrating once with respect to $x$ yields
\begin{equation*}
    h^3(\partial_x^3 h - g\partial_x h) + M \dfrac{h^2}{(1+h)^2}\partial_x h = 0.
\end{equation*}
We set the integration constant to zero to keep the flat surface as an admissible steady state of \eqref{eq:thin-film-equation}. Since we look for positive steady states, we may divide by \(h^3\). Integrating once more, we obtain the second-order ordinary differential equation
\begin{equation}\label{eq:steady-state-equation}
    \partial_x^2 h = gh - M\Bigl(\frac{1}{1+h} + \log\Bigl(\frac{h}{1+h}\Bigr) \Bigr) + \tilde{K},
\end{equation}
where \(\tilde{K}\in \R\) is an integration constant. Notice that to each constant solution \(\bar{h}\) corresponds a different integration constant \(\tilde{K}=\tilde{K}(\bar{h};M)\) given by
\begin{equation*}
    \tilde{K}(\bar{h};M) := -g\bar{h} + M\Bigl(\frac{1}{1+\bar{h}} + \log\Bigl(\frac{\bar{h}}{1+\bar{h}}\Bigr) \Bigr).
\end{equation*}

Without loss of generality we set in the sequel \(\bar{h}=1\) and \(\tilde{K}(1;M)=-g + M\bigl(\tfrac{1}{2} + \log\bigl(\tfrac{1}{2}\bigr)\bigr)\). To study \eqref{eq:steady-state-equation} around the steady state \(h=\bar{h}\), we write \(h=\bar{h} + v\) and obtain the equation 
\begin{equation*}
    \partial_x^2 v = g(1+v) - M\Bigl(\frac{1}{2+v} + \log\Bigl(\frac{1+v}{2+v}\Bigr) \Bigr) + \tilde{K}.
\end{equation*}
By setting
\begin{equation}\label{eq:definition-K-scalar}
    K(\bar{v}) := \frac{1}{M}(\tilde{K}(1+\bar{v};M) - g), 
\end{equation}
for \(\bar{v}\in \R\), and thus \(K=K(0) =\tfrac{1}{2} + \log\bigl(\tfrac{1}{2}\bigr)\), we rewrite the equation above as
\begin{equation}\label{eq:shifted-steady-state-equation}
    \partial_x^2 v = gv - M\Bigl(\frac{1}{2+v} + \log\Bigl(\frac{1+v}{2+v}\Bigr) \Bigr) + MK.
\end{equation}
Equation \eqref{eq:shifted-steady-state-equation} can be written as a first-order system, which is given by
\begin{equation}
    \left\{\begin{split}
        \partial_x v &= w, \\
        \partial_x w &= gv - M\Bigl(\frac{1}{2+v} + \log\Bigl(\frac{1+v}{2+v}\Bigr) \Bigr) + MK.
    \end{split}\right.
    \label{eq:first-order-system}
\end{equation}
To study bifurcations of the trivial state, we linearise the system about $(v,w) = (0,0)$ and obtain
\begin{equation*}
    L = \begin{pmatrix}
        0 & 1 \\
        g - \frac{1}{4}M & 0
    \end{pmatrix}.
\end{equation*}
The eigenvalues of $L$ are given by 
\begin{equation*}
    \lambda_\pm = \pm \sqrt{g - \frac{1}{4}M}.
\end{equation*}
If $M < M^\ast := 4g$ the eigenvalues are real, where one is positive and one negative. At $M = M^\ast$ the eigenvalues collide at $\lambda = 0$ and split into two complex conjugated, purely imaginary eigenvalues for $M > M^\ast$.

\begin{figure}
    \centering
    \includegraphics[width = 0.9\textwidth]{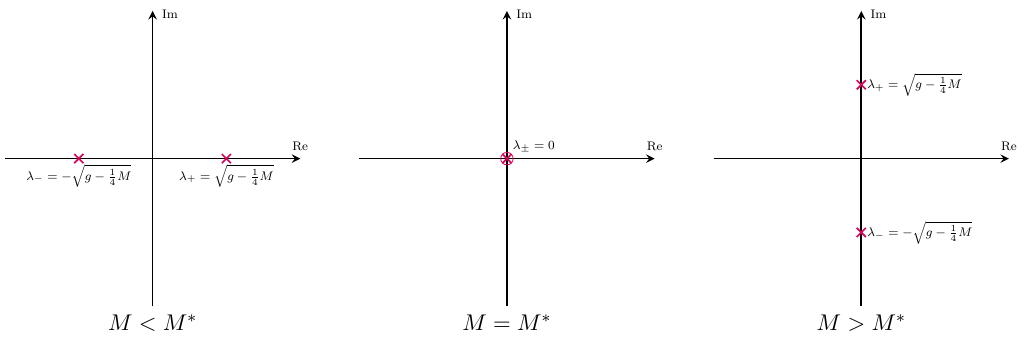}
    \caption{Plot of the different spectral situations. For $M < M^*$, the matrix $L$ has two real eigenvalues. These eigenvalues collide at zero if $M = M^*$ and then form a pair of purely imaginary, complex conjugated eigenvalues for $M > M^*$.}
    \label{fig:eigenvalues-bifurcation}
\end{figure}

This suggests that for $M < M^\ast$ a homoclinic orbit bifurcates from the trivial state, that is, a solution $(v_\mathrm{hom},w_\mathrm{hom})$ of \eqref{eq:shifted-steady-state-equation} satisfying
\begin{equation*}
    \lim_{\vert x \vert \rightarrow \infty} (v_\mathrm{hom},w_\mathrm{hom})(x) = (0,0).
\end{equation*}
For $M > M^\ast$, we expect the bifurcation of periodic solutions with period close to $\frac{2\pi}{\vert \lambda_\pm \vert}$.
This is indeed the case as the following analysis shows.

The main observation to understand the bifurcation structure of \eqref{eq:shifted-steady-state-equation} is that \eqref{eq:first-order-system} is a Hamiltonian system with Hamiltonian
\begin{equation}\label{eq:Hamiltonian}
    \Hcal(v,w) = \frac{1}{2}w^2 - \frac{g}{2} v^2 + M(1+v)\log\Bigl(\frac{1+v}{2+v}\Bigr) - MKv
\end{equation}
on the phase space \((v,w) \in \Omega:=(-1,\infty)\times \R\). Notice that we require \(v>-1\) since we are interested in positive solutions for \(h=1+v\). However, the Hamiltonian is also defined on the boundary of phase space \(v=-1\).

The Hamiltonian system plays a crucial role in the remainder of the paper. Therefore, we collect some results about its dynamics under the more general assumption that 
\begin{equation}\label{eq:constant-assumption}
    K \leq K(0) = \frac{1}{2} + \log\bigl(\frac{1}{2}\bigr)
\end{equation}
 in the following lemma.
This more general assumption implies that if there are two fixed points \((v_l,w_l)\), \((v_u,w_u)\) of the Hamiltonian system, then it must hold \(v_l\leq 0 \leq v_u\) and we have \(v_l=v_u=0\) if and only if \(K = K(0)\) and \(M=M^*\). We will need the more general choice of \(K\leq K(0)\) later when studying the global bifurcation curves of the periodic solutions, see the proof of Proposition \ref{prop:film-rupture}.

\begin{figure}[H]
    \centering
    \includegraphics[width=0.5\textwidth]{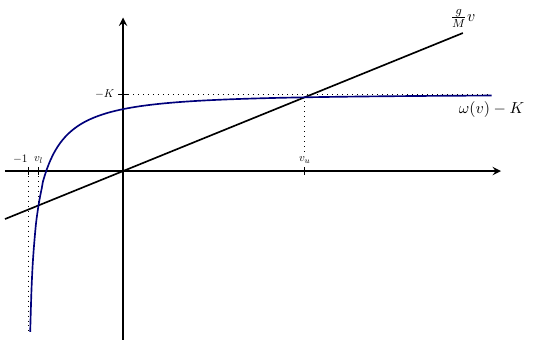}
    \caption{Graphical representation of the fixed points of the Hamiltonian system \eqref{eq:Hamiltonian}, which lie exactly at the intersection of \(\omega(v)-K\) and the line \(\tfrac{g}{M}v\).}
    \label{fig:fixed-points}
\end{figure}

\begin{lemma}\label{lem:orbits-Hamiltonian-system}
    If $K$ satisfies assumption \eqref{eq:constant-assumption}, the Hamiltonian dynamics generated by \eqref{eq:Hamiltonian} satisfies the following statements.
    \begin{enumerate}[label=(\arabic*)]
        \item\label{it:orbit-1} There are two fixed points $(v_l,w_l), (v_u,w_u)\in  \Omega$ with $w_l=w_u = 0$ and $-1 < v_l \leq 0 \leq v_u$, where equality \(v_l=v_u=0\) holds if and only if $K = K(0)$ and $M = M^\ast$.
        \item\label{it:orbit-2} If $v_l < v_u$ the lower fixed point $(v_l,0)$ is a minimum of $\Hcal$ and there is a neighborhood $U$ of $(v_l,0)$ in $\Omega$, which is filled with periodic orbits.
        \item\label{it:orbit-3} If $v_l < v_u$ the upper fixed point $(v_u,0)$ is a saddle point and if additionally $\Hcal(-1,0) \geq \Hcal(v_u,0)$ there exists a homoclinic orbit $(v_\mathrm{hom},w_\mathrm{hom})$ satisfying
        \begin{equation*}
            \lim_{\vert x \vert \rightarrow +\infty} (v_\mathrm{hom},w_\mathrm{hom}) = (v_u,0).
        \end{equation*}
        The orbit $\{(v_\mathrm{hom}(x),w_\mathrm{hom}(x)) \in \Omega \,:\, x \in \R\}$ is the boundary of $U$. If $\Hcal(-1,0) < \Hcal(v_u,0)$, then the boundary of \(U\) consists of the level set of \(\Hcal(-1,0)\).
    \end{enumerate}
\end{lemma}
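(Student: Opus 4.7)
The plan is to reduce the analysis of the first-order system \eqref{eq:first-order-system} to that of the scalar function $\omega(v) := \tfrac{1}{2+v} + \log\bigl(\tfrac{1+v}{2+v}\bigr)$ on $(-1, \infty)$. A direct calculation gives
\begin{equation*}
    \omega'(v) = \frac{1}{(1+v)(2+v)^2} > 0 \quad \text{and} \quad \omega''(v) = -\frac{3v+4}{(1+v)^2(2+v)^3} < 0,
\end{equation*}
so $\omega$ is strictly increasing and strictly concave on its domain, with $\omega(v) \to -\infty$ as $v \to -1^+$ and $\omega(v) \to 0$ as $v \to \infty$. The fixed points of \eqref{eq:first-order-system} in $\Omega$ are therefore precisely the pairs $(v,0)$ with $v$ a zero of the strictly concave function $F(v) := \omega(v) - K - \tfrac{g}{M}v$, and the whole lemma reduces to zero-counting of $F$ together with a Hessian computation for $\Hcal$.

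For part~(1), the strict concavity of $F$ bounds the number of zeros by two. Since $F(-1^+) = F(\infty) = -\infty$ and hypothesis \eqref{eq:constant-assumption} yields $F(0) = K(0) - K \geq 0$, the intermediate value theorem gives two zeros $v_l \leq 0 \leq v_u$ counted with multiplicity. Inspecting the sign of $F'(0) = \tfrac{1}{4} - \tfrac{g}{M}$ then shows that the degenerate case $v_l = v_u = 0$ is possible only when simultaneously $K = K(0)$ and $M = M^\ast$.

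For parts~(2) and~(3), differentiate to obtain $\Hcal_{ww} \equiv 1$, $\Hcal_{vw} \equiv 0$, and, after simplification, $\Hcal_v(v,0) = MF(v)$, $\Hcal_{vv}(v,0) = MF'(v)$. Assuming $v_l < v_u$, strict concavity of $F$ combined with $F(v_l) = F(v_u) = 0$ forces $F'(v_l) > 0$ and $F'(v_u) < 0$. Hence the Hessian of $\Hcal$ at $(v_l,0)$ is positive definite and the point is a strict local minimum; the Morse lemma then supplies a neighbourhood $U$ whose energy level sets $\{\Hcal = c\}$ with $c$ slightly above $\Hcal(v_l,0)$ are smooth closed curves encircling $(v_l,0)$, and these are periodic orbits by conservation of $\Hcal$. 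The indefinite Hessian at $(v_u,0)$ identifies this fixed point as a saddle.

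The homoclinic analysis then comes from studying the one-dimensional profile $v \mapsto \Hcal(v,0)$, which by $\Hcal_v(v,0) = MF(v)$ is strictly decreasing on $(-1,v_l)$, strictly increasing on $(v_l,v_u)$, and strictly decreasing on $(v_u,\infty)$. Writing $c^\ast := \Hcal(v_u,0)$, in the case $\Hcal(-1,0) \geq c^\ast$ the intermediate value theorem yields $v_\ast \in (-1,v_l]$ with $\Hcal(v_\ast,0) = c^\ast$, and the component of $\{\Hcal = c^\ast\}$ contained in $[v_\ast,v_u] \times \R$ is a closed curve through the saddle; parameterising by the flow gives the homoclinic orbit and shows it bounds $U$ by monotonicity of $c \mapsto \{\Hcal \leq c\}$. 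If instead $\Hcal(-1,0) < c^\ast$, no such $v_\ast$ exists and the encircling loops cease to close within $\Omega$ precisely when the energy reaches $\Hcal(-1,0)$, giving the claimed boundary of $U$. The main obstacle I anticipate is the bookkeeping of edge cases (equality in \eqref{eq:constant-assumption}, the critical Marangoni value, and the borderline $\Hcal(-1,0) = c^\ast$), but the scalar concavity of $F$ makes each case essentially transparent.
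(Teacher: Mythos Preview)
Your proposal is correct and follows essentially the same approach as the paper: both reduce to analysing the concave scalar function $\omega(v) - K - \tfrac{g}{M}v$ (equivalently, the paper's $\Gcal'(v)/M$) and then classify the fixed points via the Hessian of $\Hcal$. The one notable difference is that to determine the signs of $\Hcal_{vv}$ at $(v_l,0)$ and $(v_u,0)$, the paper splits into cases $M \lessgtr M^\ast$ and uses monotonicity of $v \mapsto \Gcal''(v)$, whereas you observe directly that strict concavity of $F$ together with $F(v_l) = F(v_u) = 0$ forces $F'(v_l) > 0 > F'(v_u)$; this is a cleaner route to the same conclusion and avoids the case analysis entirely.
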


\begin{figure}[H]
    \centering
    \begin{minipage}[t]{0.49\textwidth}
            \includegraphics[width=1\textwidth]{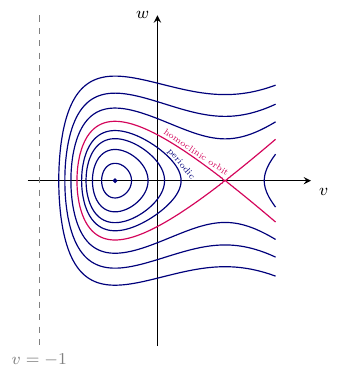}
    \end{minipage}
    \begin{minipage}[t]{0.49\textwidth}
            \includegraphics[width=1\textwidth]{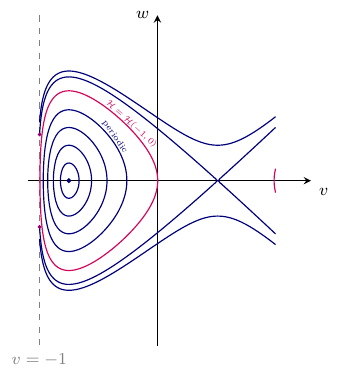}
    \end{minipage}
    \caption{Plot of the phase portrait for the Hamiltonian dynamics \eqref{eq:Hamiltonian}. On the left, the periodic orbits are contained by the homoclinic orbit, while on the right the periodic orbits are contained by the level set of $\Hcal(-1,0)$ and no homoclinic orbit exists. In particular, all level sets of outside of the level set of $\Hcal(-1,0)$ intersect $v=-1$ and cannot form periodic orbits.}
    \label{fig:phase-space}
\end{figure}

\begin{proof}
    We first note that $(v,w)$ is a fixed point of the Hamiltonian dynamics generated by $\Hcal$ if $\nabla \Hcal(v,w) = 0$. Therefore, we obtain $w = 0$ and
    \begin{equation}\label{eq:fixed-point}
        \frac{g}{M} v = \frac{1}{2+v} + \log\Bigl(\frac{1+v}{2+v}\Bigr) - K =: \omega(v) - K.
    \end{equation}
    Then $v \mapsto \omega(v)$ is strictly concave and $\omega(0) - K \geq 0$ due to \eqref{eq:constant-assumption}. Since the left-hand side is a linear function, which vanishes at $v = 0$, \eqref{eq:fixed-point} has precisely two solutions $v_l < 0 < v_u$ if $\omega(0) - K > 0$ and thus $K < K(0)$. If $K = K(0)$, \eqref{eq:fixed-point} has at least one solution $v=0$. This is a double root if and only if $v \mapsto \frac{g}{M}v$ is tangential to $v \mapsto \omega(v) - K(0)$ at $v = 0$. Since \(\frac{d}{dv}\omega(0) = \tfrac{1}{4}\), this holds true if and only if \(M=M^* = 4g\). If $M < M^\ast$, we have \(\frac{g}{M} > \frac{d}{dv}\omega(0)\) and hence there is an additional solution $v_l < 0 = v_u$. Moreover, if $M > M^\ast$, it holds \(\frac{g}{M} < \frac{d}{dv}\omega(0)\) and there is an additional solution $v_l = 0 < v_u$. This proves \ref{it:orbit-1}.

    To show \ref{it:orbit-2} and \ref{it:orbit-3} we first note that the Hamiltonian $\Hcal$ decomposes as $\Hcal(v,w) = \Fcal(w) + \Gcal(v)$ with
    \begin{equation}\label{eq:Gcal}
        \Fcal(w) = \frac{1}{2} w^2, \quad \Gcal(v) = -\frac{g}{2} v^2 + M(1+v)\log\Bigl(\frac{1+v}{2+v}\Bigr) - MKv.
    \end{equation}
    Therefore, the eigenvalues of $D^2\Hcal(v_0,w_0)$ are given by $\Fcal^{\prime\prime}(w_0)$ and $\Gcal^{\prime\prime}(v_0)$. Since $\Fcal^{\prime\prime}(w_0) = 1$, it is sufficient to determine the sign of $\Gcal^{\prime\prime}(v_0)$ given by
    \begin{equation*}
        \Gcal^{\prime\prime}(v_0) = -g+\frac{M}{(v_0+1) (v_0+2)^2}.
    \end{equation*}
    We have $\Gcal^{\prime\prime}(0) = -g + \frac{1}{4}M$ and for fixed \(M>0\), the mapping $v_0 \mapsto \Gcal^{\prime\prime}(v_0)$ is monotonically decreasing. In addition, assuming that $v_l < v_u$, \ref{it:orbit-1} implies that $v \mapsto \Gcal(v)$ has two extrema in the interior of $(-1,\infty)$ with $v_l \leq 0 \leq v_u$. 
    
    For $M > M^\ast$ it holds that $\Gcal^{\prime\prime}(0) > 0$ and since $v_l \leq 0$ we conclude that \(\Gcal^{\prime\prime}(v_l) > 0\). So, \(v_l\) is a minimum of \(\Hcal\). Moreover, \(v_u\) is a maximum of \(\Gcal\) since \(\Gcal(v) \to -\infty\) as \(v\to +\infty\), so \((v_u,0)\) is a saddle point of \(\Hcal\).

    For \(M< M^\ast\), it holds that $\Gcal^{\prime\prime}(0) < 0$. Since \(v_u \geq 0\), we find that \(\Gcal^{\prime\prime}(v_u) < 0\) by using again the monotonicity of \(\Gcal^{\prime\prime}\). This shows that \((v_u,0)\) is a saddle point of \(\Hcal\). Furthermore, \(v_l\) is a minimum of \(\Gcal\), hence \((v_l,0)\) is a minimum for \(\Hcal\).

    If \(M=M^*\) and \(K<K(0)\), we have \(\Gcal^{\prime\prime}(0) = 0\) and \(v_l< 0\) and \(v_u>0\). By the strict monotonicity of \(\Gcal^{\prime\prime}\), we thus find that \(\Gcal^{\prime\prime}(v_l) > 0 > \Gcal^{\prime\prime}(v_u)\). Again, \((v_l,0)\) is a minimum of \(\Hcal\) and \((v_u,0)\) is a saddle point.

    Therefore, if $v_l < v_u$ we find that $(v_l,0)$ is a minimum of $\Hcal$ and $(v_u,0)$ is a saddle point of $\Hcal$. In fact, $(v_l,0)$ is an isolated minimum since $\tfrac{d^4}{dv^4}\Gcal(v_l) > 0$.

    Since $(v_l,0)$ is an isolated minimum of $\Hcal$, there exists a neighborhood $U$ of $(v_l,0)$ such that every point in $U$ belongs to a level set of $\Hcal$, which forms a closed curve around $(v_l,0)$ and does not contain a fixed point of $\Hcal$. Since orbits of Hamiltonian systems are confined to the level sets of the Hamiltonian, the neighborhood $U$ is filled with periodic orbits. This proves \ref{it:orbit-2}. In view of $\Gcal$ having precisely two extrema, the boundary of $U$ is given by the level set of the minimum of $\Hcal(-1,0)$ and $\Hcal(v_u,0)$.

    Finally, if $\Hcal(-1,0) \geq \Hcal(v_u,0)$ the level set of $\Hcal$ containing $(v_u,0)$ contains a closed curve, which loops around $(v_l,0)$. Therefore, the Hamiltonian system has a homoclinic orbit to $(v_u,0)$. Since this closed curve is also the boundary of $U$ part \ref{it:orbit-3} is proved.
\end{proof}

\begin{remark}
    If $\Hcal(-1,0) \geq \Hcal(v_u,0)$ and there exists a homoclinic orbit \((v_\mathrm{hom},w_\mathrm{hom})\), we find that there is a sequence of periodic orbits in \(U\) approaching the homoclinic orbit such that their periods converge to \(+\infty\). This is known in the literature as \emph{blue-sky catastrophe} or \emph{homoclinic period blow-up}, c.f. \cite{devaney,vanderbauwhede1992}.
\end{remark}

The existence result for solutions to the Hamiltonian systems in the previous Lemma \ref{lem:orbits-Hamiltonian-system} implies in particular the existence of solutions to the integrated equation \eqref{eq:shifted-steady-state-equation} and thus, solutions to the thin-film equation \eqref{eq:thin-film-equation}.

The previous analysis demonstrates the existence of both solitary solutions, i.e.\ solutions which converge to the same fixed state for $x \rightarrow \pm \infty$, corresponding to homoclinic orbits, and infinitely many periodic solutions. In particular, for $M < M^\ast$ and $K = K(0)$ we find a homoclinic orbit to the steady state $(v_u,0) = (0,0)$. This establishes the following solution to the thin-film equation \eqref{eq:thin-film-equation}.

\begin{corollary}\label{cor:homoclinic} 
    Fix $\bar{h} = 1$. Then, there exists $M^\ast = 4g$ such that for all $M < M^\ast$, the thin-film equation \eqref{eq:thin-film-equation} has stationary solutions of the form $h(x) = \bar{h} + v(x)$ with $v(x) < 0$ for all $x \in \R$ and
    \begin{equation*}
        \lim_{\vert x \vert \rightarrow \infty} v(x) = 0.
    \end{equation*}
\end{corollary}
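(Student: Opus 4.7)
The plan is to specialize Lemma~\ref{lem:orbits-Hamiltonian-system}(3) to the choice $K = K(0)$ and $M < M^* = 4g$, extract the resulting homoclinic orbit of the Hamiltonian system, and translate it back to a stationary solution of~\eqref{eq:thin-film-equation}. With the normalization $\bar{h} = 1$ the integration constant is fixed as $K = K(0) = \tfrac{1}{2} + \log \tfrac{1}{2}$, and a stationary solution of the form $h = 1 + v$ satisfying $h(x) \to 1$ as $|x| \to \infty$ corresponds precisely to a homoclinic orbit of the first-order Hamiltonian system~\eqref{eq:first-order-system} based at the saddle $(v_u, w_u) = (0, 0)$.

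By Lemma~\ref{lem:orbits-Hamiltonian-system}(1), for $M < M^*$ and $K = K(0)$ the two fixed points are the center $(v_l, 0)$ with $v_l < 0$ and the saddle $(0, 0)$. The key step is to verify the hypothesis $\Hcal(-1, 0) \geq \Hcal(v_u, 0) = \Hcal(0, 0)$ of Lemma~\ref{lem:orbits-Hamiltonian-system}(3). This is a direct evaluation from the explicit expression~\eqref{eq:Hamiltonian}, using that the product $M(1+v)\log\bigl(\tfrac{1+v}{2+v}\bigr)$ has limit zero as $v \to -1^+$, so $\Hcal(-1, 0) = -\tfrac{g}{2} + MK(0)$, while $\Hcal(0,0) = M\log\tfrac{1}{2}$. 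Once the resulting inequality in $M$ and $g$ is checked, part~(3) of the lemma yields a homoclinic orbit $(v_\mathrm{hom}, w_\mathrm{hom})$ converging to $(0,0)$ at both infinities.

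It then remains to observe that $v_\mathrm{hom}(x) < 0$ for every $x \in \R$. Any point on the orbit with $v = 0$ satisfies $\Hcal(0, w) = \Hcal(0, 0)$ by conservation of energy, which forces $w = 0$ and hence coincidence with the saddle; this is impossible at finite $x$ since $(v_\mathrm{hom}, w_\mathrm{hom})$ is not the trivial trajectory. As the orbit loops around the center $(v_l, 0)$ with $v_l < 0$, continuity confines it to the half-plane $v < 0$ for all finite $x$. Setting $h(x) := 1 + v_\mathrm{hom}(x)$ then produces the claimed stationary solution of~\eqref{eq:thin-film-equation}. The only non-routine ingredient is the Hamiltonian inequality $\Hcal(-1, 0) \geq \Hcal(0, 0)$; this is also the point where the precise threshold in $M$ must be pinned down, and is thus the main obstacle of the proof.
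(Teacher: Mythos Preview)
Your approach is the paper's own: the corollary is obtained by specializing Lemma~\ref{lem:orbits-Hamiltonian-system}\ref{it:orbit-3} to $K = K(0)$, noting that for $M < M^\ast$ the saddle sits at $(v_u,0)=(0,0)$, and reading off the homoclinic orbit. Your additional argument that $v_{\mathrm{hom}}(x)<0$ for all finite $x$ (via conservation of $\Hcal$ and the location of the center) is correct and fills in a detail the paper leaves implicit.

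There is, however, a genuine issue at exactly the point you flagged as ``the main obstacle''. Carrying out your own computation,
\begin{equation*}
    \Hcal(-1,0)-\Hcal(0,0) \;=\; \Bigl(-\tfrac{g}{2}+MK(0)\Bigr)-M\log\tfrac{1}{2} \;=\; -\tfrac{g}{2}+\tfrac{M}{2},
\end{equation*}
so the hypothesis $\Hcal(-1,0)\geq \Hcal(v_u,0)$ of Lemma~\ref{lem:orbits-Hamiltonian-system}\ref{it:orbit-3} holds if and only if $M\geq g$. For $0<M<g$ the level set through the saddle escapes the phase space $\Omega$ through $v=-1$ with $w\neq 0$, and there is no homoclinic orbit to $(0,0)$ in $\Omega$. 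Hence the argument---both yours and the paper's, which does not verify the inequality---establishes the corollary only on the range $g\leq M<M^\ast=4g$, not for all $M<M^\ast$ as stated. Your plan is sound, but when you execute the check you should expect to find this restricted range rather than the one claimed.
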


For \(M>M^\ast\), we see that the structure of the orbits changes. There is still a homoclinic orbit, but now about a fixed point \(v_u >0\). Meanwhile, the dynamics close to the fixed point \(v_l=0\) is filled by periodic orbits. In this paper, we will be concerned with a detailed analysis of these periodic orbits. The analysis of the homoclinic orbit found in Corollary \ref{cor:homoclinic} will be left for future studies.

Since \eqref{eq:thin-film-equation} is in divergence form, the mass of the solution is formally conserved. Therefore, it makes physical sense to search for a bifurcating family of periodic solutions, which conserve their mass over a period along the bifurcation curve. This is the main topic of the following sections \ref{sec:local} to \ref{sec:film-rupture}.

\section{Functional analytic setup and local bifurcation}\label{sec:local}

\subsection{Functional analytic setup}

As we have seen in the previous section, for \(M>M^*\) there exist periodic solutions close to the fixed point \(v_l=0\). The structure of these periodic solutions will be studied using bifurcation theory.

Since \eqref{eq:first-order-system} has a Hamiltonian structure, we may analyse the period of periodic orbits bifurcating from \(v_l = 0\) for $M > M^\ast$ using the Lyapunov subcentre theorem, see \cite[Thm. 4.1.8]{schneider2017}. In fact, the wave number is close to $\vert\Im(\lambda_\pm)\vert$, where
\begin{equation*}
    \lambda_\pm = \pm\sqrt{g - \frac{1}{4}M}
\end{equation*}
are the eigenvalues of the linearisation of \eqref{eq:first-order-system} about $(v,w) = (0,0)$. Then their period is close to $\frac{2\pi}{\vert\Im(\lambda_\pm)\vert}$ and since $M > M^\ast = 4g$, the wave number is close to
\begin{equation*}
    k^\ast_M = \sqrt{\frac{1}{4}M-g}.
\end{equation*}
In particular, $k^\ast_M$ vanishes for $M \rightarrow M^\ast$ and $k^\ast_M \rightarrow \infty$ as $M \rightarrow \infty$. Therefore, for every wave number $k > 0$ exists a $M^\ast(k)=M^* + 4k^2$ such that for $M > M^\ast(k)$ periodic solutions with wave number $k$ close to $(v_u,0)$ exist in \eqref{eq:first-order-system}.

In light of this structure, we now fix a wave number $k_0 > 0$ and study the bifurcation of periodic solutions of \eqref{eq:thin-film-equation} with wave number $k_0$, which conserve mass along the bifurcation curve in the sense that
\begin{equation*}
    \frac{k_0}{2\pi} \int_{-\frac{\pi}{k_0}}^{\frac{\pi}{k_0}} h(x) \de x  =: \fint_{-\frac{\pi}{k_0}}^{\frac{\pi}{k_0}} h(x) \de x = \fint_{-\frac{\pi}{k_0}}^{\frac{\pi}{k_0}} \bar{h} \de x = 1.
\end{equation*}

Writing again \(h=\bar{h} + v\), we define
\begin{equation*}
    F(v,M) := \partial_x^2 v - gv + M\Bigl(\frac{1}{2+v} + \log\Bigl(\frac{1+v}{2+v}\Bigr) \Bigr) - MK(v) =: \partial_x^2 v + f_M(v),
\end{equation*}
where 
\begin{equation*}
    K(v) := \fint_{-\frac{\pi}{k_0}}^{\frac{\pi}{k_0}} \frac{1}{2+v} + \log\Bigl(\frac{1+v}{2+v}\Bigr) \de x
\end{equation*}
This choice of $K$ guarantees that the mass of $v$ vanishes over one period, since $F(\cdot, M)$ leaves the space of functions with vanishing integral mean invariant. Furthermore, observe that this is an extension of \(K\) introduced in \eqref{eq:definition-K-scalar} to non-constant functions.

Next, we introduce the function spaces
\begin{equation*}
    \begin{split}
        \Xcal &= \Bigl\{v \in H^2_\mathrm{per} \,:\, \fint_{-\frac{\pi}{k_0}}^{\frac{\pi}{k_0}} v(x) \de x = 0 \text{ and } v \text{ is even}\Bigr\}, \\
        \Ycal &= \Bigl\{v \in L^2_\mathrm{per} \,:\, \fint_{-\frac{\pi}{k_0}}^{\frac{\pi}{k_0}} v(x) \de x = 0 \text{ and } v \text{ is even}\Bigr\}.
    \end{split}
\end{equation*}
Here, for a given wave number \(k_0\), we denote by \(H^2_{\mathrm{per}}\) the space of \(\tfrac{2\pi}{k_0}\)-periodic functions in \(H^2_\loc(\R)\), and \(L^2_{\mathrm{per}}\) the space of \(\tfrac{2\pi}{k_0}\)-periodic \(L^2_{\loc}\)-functions. The space \(\Xcal\) is equipped with the norm of \(H^2_{\mathrm{per}}\), that is the \(H^2\)-norm on \(\bigl(-\tfrac{\pi}{k_0},\tfrac{\pi}{k_0}\bigr)\), and \(\Ycal\) with the \(L^2\)-norm on \(\bigl(-\tfrac{\pi}{k_0},\tfrac{\pi}{k_0}\bigr)\).

We define the open subset \(\Ucal\) of \(\Xcal\) by
\begin{equation*}
    \Ucal := \{v\in \Xcal \,:\, v > -1\}.
\end{equation*}
In view of $h = v+1$, this is the set of all surface profiles, which have strictly positive height. Then, \(F\) is a well-defined map from \(\Ucal \times \R\) to \(\Ycal\) and we consider the bifurcation problem
\begin{equation}\label{eq:bifurcation-problem}
    F(v,M) = 0.
\end{equation}

\subsection{Local bifurcation}\label{sec:local-bifurcation}

We first establish the existence of a local bifurcation curve emerging at $M = M^\ast(k_0) = M^* + 4k_0^2$. The proof relies on an application of the Crandall--Rabinowitz theorem on the bifurcation from a simple eigenvalue.

\begin{theorem}[Local bifurcation]\label{thm:local-bifurcation}
    Fix \(k_0 >0\). Then at \((0,M^*(k_0))\) a subcritical pitchfork bifurcation occurs and there exist $\eps >0$ and a branch of solutions
    \begin{equation*}
        \{(v(s),M(s)) : s \in (-\eps,\eps)\} \subset \Ucal \times \R
    \end{equation*}
    to the bifurcation problem \eqref{eq:bifurcation-problem} with expansions
    \begin{equation}\label{eq:expansions}
        \begin{split}
            v(s) &= s \cos(k_0 x) + \tau(s), \\
            M(s) &= M^\ast(k_0) - \frac{(g+k_0^2)(8g+41k_0^2)}{12k_0^2} s^2 + \Ocal(\vert s \vert^3),
        \end{split}
    \end{equation}
    where $\tau = \Ocal(\vert s \vert^2)$ in $\Xcal$.
\end{theorem}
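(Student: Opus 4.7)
The plan is to apply the Crandall--Rabinowitz theorem on bifurcation from a simple eigenvalue to the real-analytic map $F:\Ucal\times\R\to\Ycal$, and then to extract the leading-order asymptotics of $M(s)$ by a Lyapunov--Schmidt / power-series expansion of $F(v(s),M(s))=0$.

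First I would record that $F(0,M)=0$ for all $M$ (by construction of $K$) and compute the Fr\'echet derivative at the trivial state. Writing $\phi(v):=\tfrac{1}{2+v}+\log\tfrac{1+v}{2+v}$, one has $\phi'(0)=\tfrac14$, and the nonlocal contribution $MK'(0)\dot v=\tfrac{M}{4}\fint\dot v\de x$ vanishes on the zero-mean space $\Xcal$. Hence on $\Xcal$ the linearisation reduces to the constant-coefficient self-adjoint operator $\partial_x^2+(\tfrac{M}{4}-g)$, which is Fredholm of index $0$. Restricted to even, zero-mean, $\tfrac{2\pi}{k_0}$-periodic functions, its kernel is spanned by $\cos(k_0x)$ precisely when $\tfrac{M}{4}-g=k_0^2$, i.e.\ at $M=M^\ast(k_0)$. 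The Crandall--Rabinowitz transversality assumption then amounts to showing that $D_MD_vF(0,M^\ast(k_0))[\cos(k_0x)]=\tfrac14\cos(k_0x)$ does not lie in the range of $D_vF(0,M^\ast(k_0))$, which is immediate since self-adjointness makes the range the $L^2$-orthogonal complement of the kernel. The theorem thus yields the local analytic branch, and the splitting $v(s)=s\cos(k_0x)+\tau(s)$ with $\tau(s)$ orthogonal to $\cos(k_0x)$ and $\tau(s)=\Ocal(s^2)$ is the standard output.

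To compute the $s^2$-coefficient of $M(s)$ I would insert the ansatz $v=s\cos(k_0x)+s^2v_2+s^3v_3+\Ocal(s^4)$ and $M=M^\ast(k_0)+sM_1+s^2M_2+\Ocal(s^3)$ into $F=0$, using the Taylor expansion $\phi(v)-\phi(0)=\tfrac14 v-\tfrac14 v^2+\tfrac{11}{48}v^3+\Ocal(v^4)$ and the representation
\begin{equation*}
F(v,M)=\partial_x^2 v+(\tfrac{M}{4}-g)v-\tfrac{M}{4}(v^2-\langle v^2\rangle)+\tfrac{11M}{48}(v^3-\langle v^3\rangle)+\Ocal(v^4),
\end{equation*}
where $\langle\cdot\rangle$ denotes the period mean. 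At order $s^2$ the Fredholm alternative (projection onto $\cos(k_0x)$) forces $M_1=0$, because the quadratic source $\cos^2(k_0x)-\tfrac12=\tfrac12\cos(2k_0x)$ has no $\cos(k_0x)$-component; solving the remainder on the second harmonic pins down $v_2=A_2\cos(2k_0x)$ with $A_2=-\tfrac{g+k_0^2}{6k_0^2}$. The order-$s^3$ solvability condition (projection onto $\cos(k_0x)$), combined with the identities $2\cos(k_0x)\cos(2k_0x)=\cos(k_0x)+\cos(3k_0x)$ and $\cos^3(k_0x)=\tfrac34\cos(k_0x)+\tfrac14\cos(3k_0x)$, then determines $M_2$.

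The main obstacle, such as it is, is the bookkeeping at order $s^3$: one must observe that the nonlocal average terms contribute nothing at this order, since both $[v^2]_{s^3}=2\cos(k_0x)v_2$ and $[v^3]_{s^3}=\cos^3(k_0x)$ have zero mean, so that only the local quadratic and cubic pieces project onto $\cos(k_0x)$. After this simplification the scalar solvability identity collapses to $\tfrac{M_2}{4}=\tfrac{M^\ast(k_0)}{4}A_2-\tfrac{11M^\ast(k_0)}{64}$, and plugging in $M^\ast(k_0)=4(g+k_0^2)$ together with $A_2=-\tfrac{g+k_0^2}{6k_0^2}$ yields $M_2=-\tfrac{(g+k_0^2)(8g+41k_0^2)}{12k_0^2}<0$. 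The negativity of $M_2$ identifies the bifurcation as a subcritical pitchfork and matches the expansion stated in the theorem.
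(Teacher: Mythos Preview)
Your proof is correct and follows essentially the same route as the paper: the existence of the branch is obtained via the Crandall--Rabinowitz theorem with the same verification that $\partial_vF(0,M^\ast(k_0))=\partial_x^2+k_0^2$ has one-dimensional kernel $\spn\{\cos(k_0\cdot)\}$ in $\Xcal$ and that $\partial^2_{v,M}F(0,M^\ast(k_0))\xi_0=\tfrac14\xi_0\notin\ran L$. For the expansion of $M(s)$ the paper simply invokes the general formulas \cite[Eqs.~I.6.3 and I.6.11]{kielhöfer2012} for $\dot M(0)$ and $\ddot M(0)$ without displaying the intermediate arithmetic, whereas you carry out the underlying Lyapunov--Schmidt projection explicitly; your computation of $A_2=-\tfrac{g+k_0^2}{6k_0^2}$ and the solvability identity $\tfrac{M_2}{4}=\tfrac{M^\ast(k_0)}{4}A_2-\tfrac{11M^\ast(k_0)}{64}$ indeed reproduces the claimed coefficient $-\tfrac{(g+k_0^2)(8g+41k_0^2)}{12k_0^2}$.
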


\begin{proof}
    We first show the existence of a local bifurcation branch using the Crandall--Rabinowitz theorem \cite[Thm. 8.3.1]{buffoni2003}. Since, \(F(0,M)=0\) for all \(M\in \R\), the following conditions need to be satisfied: 
    \begin{enumerate}[label=(\arabic*)]
        \item\label{it:local-1} \(L:=\partial_v F(0,M^\ast(k_0))\) is a Fredholm operator of index zero;
        \item\label{it:local-2} \(\ker L = \spn\{\xi_0\}\), \(\xi_0 \in \Xcal\setminus\{0\}\) is one-dimensional;
        \item\label{it:local-3} transversality condition: \(\partial_{v,M}^2 F(0,M^\ast(k_0))(\xi_0,1) \notin \ran L\).
    \end{enumerate}
    We first calculate
    \begin{equation*}
        L = \partial_v F(0,M^\ast(k_0)) = \partial_x^2 + \Bigl(\frac{1}{4}M^\ast(k_0) - g\Bigr) = \partial_x^2 + k_0^2,
    \end{equation*}
    where the integral term $\partial_v K(v)$ vanishes since $\Xcal$ contains only functions with mean zero. The Fourier symbol of $L$ is given by
    \begin{equation*}
        \hat{L}(\ell) = k_0^2 (1-\ell^2), \quad \ell \in \N \setminus \{0\},
    \end{equation*}
    where $\ell \neq 0$, since all elements of $\Xcal$ have vanishing mean. Additionally, because $\Xcal$ contains only even functions, we  restrict to $\ell \geq 1$. Hence, the kernel of $L$ is given by
    \begin{equation*}
        \ker L = \spn(\cos(k_0 \cdot))
    \end{equation*}
    and the range of \(L\) is given by
    \begin{equation*}
        \ran L = \spn\{\cos(\ell k_0 \cdot) : \ell \geq 2\}.
    \end{equation*}
    This demonstrates that \(L\) is Fredholm with index zero and that the kernel of \(L\) is one-dimensional, spanned by \(\xi_0=\cos(k_0 \cdot)\).

    The transversality condition \ref{it:local-3} is satisfied, since
    \begin{equation*}
        \partial^2_{v,M} F(0,M^\ast(k_0))(\xi_0,1) = \frac{1}{4}\xi_0 \notin \ran L.
    \end{equation*}

    By the Crandall--Rabinowitz theorem, this proves the existence of a local bifurcation branch consisting of \(\frac{2\pi}{k_0}\)-periodic solutions with mean zero
    \begin{equation*}
        \{(v(s),M(s)) : s \in (-\eps,\eps)\} \subset \Ucal \times \R
    \end{equation*}
    bifurcating from the constant steady state at \(M^\ast(k_0)\).

    To derive the expansions \eqref{eq:expansions} we apply \cite[Cor. I.5.2]{kielhöfer2012}, and find that
    \begin{equation*}
        v(s) = s\xi_0 + \tau(s),
    \end{equation*}
    where \(\tau(s)=\Ocal(|s|^2)\) in \(\Xcal\). Moreover, in view of \cite[Eqs. I.6.3 and I.6.11]{kielhöfer2012}, we have
    \begin{equation*}
        \dot{M}(0) = 0, \qquad \ddot{M}(0) = - \frac{(g+k_0^2)(8g+41k_0^2)}{6k_0^2}.
    \end{equation*}
    Since \(\ddot{M}(0) < 0\) and \(\dot{M}(0)=0\), we  conclude that the occurring bifurcation is a subcritical pitchfork bifurcation. 
\end{proof}

\begin{remark}
    Note that the assumption that elements of the function space \(\Xcal\) are even removes the translational invariance of the thin-film equation. This is necessary to guarantee that the kernel is one-dimensional. In fact, all translations of the functions on the bifurcation branch are also periodic solutions to \eqref{eq:thin-film-equation} with mean zero.
\end{remark}

Using the expansions \eqref{eq:expansions} for $(v,M)$ on the bifurcation branch with respect to $s$, we  derive the following expansion for $v$ in terms of $M$. Denoting by \(\delta^2=M^\ast(k_0)-M(s)\), resolving the identity
\begin{equation*}
    \delta^2 = \frac{(g+k_0^2)(8g+41k_0^2)}{12k_0^2} s^2 + \Ocal(\vert s \vert^3)
\end{equation*}
for \(s\) and inserting this into the expansion for \(v\), we obtain the following corollary.

\begin{corollary}\label{cor:expansion}
    For any $k_0 > 0$ exists $\delta_0 > 0$ such for all $\delta \in (0,\delta_0)$ the expansion
    \begin{equation}\label{eq:expansion-bifurcation}
        v(x) = \pm \delta \sqrt{\frac{12k_0^2}{(g+k_0^2)(8g+41k_0^2)}} \cos(k_0 x) + \Ocal(\delta^2)
    \end{equation}
    holds, where again the remainder is small in \(\Xcal\).
\end{corollary}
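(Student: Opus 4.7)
The plan is to read Corollary \ref{cor:expansion} as a reparametrisation of the local bifurcation curve of Theorem \ref{thm:local-bifurcation}, replacing the bifurcation parameter $s$ by $\delta := (M^\ast(k_0) - M(s))^{1/2}$, inverting to get $s(\delta)$, and substituting into the expansion of $v(s)$. The entire argument reduces to a one-dimensional inverse function theorem, so no serious obstacle arises; the only point requiring a brief justification is the smooth extension of a certain quotient across $s = 0$.

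First, with $C := (g+k_0^2)(8g+41k_0^2)/(12k_0^2) > 0$, the expansion of $M(s)$ from Theorem \ref{thm:local-bifurcation} yields
\begin{equation*}
\delta^2 = Cs^2 + r(s), \qquad r(s) = \Ocal(|s|^3),
\end{equation*}
where $M$, and hence $r$, is smooth in $s$ (in fact analytic, since $F$ is analytic). Because $r$ has a triple zero at the origin, the function $\psi(s) := C + r(s)/s^2$ extends smoothly across $s = 0$ with $\psi(0) = C > 0$, so that for $|s|$ small one has $\delta^2 = s^2 \psi(s) = \Phi(s)^2$, where $\Phi(s) := s\sqrt{\psi(s)}$ is smooth near $s = 0$.

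Next, I would apply the inverse function theorem to $\Phi$. Since $\Phi(0) = 0$ and $\Phi'(0) = \sqrt{C} \neq 0$, there exist $\delta_0 > 0$ and a smooth inverse $\Phi^{-1}$ with $\Phi^{-1}(u) = u/\sqrt{C} + \Ocal(u^2)$. The equation $\delta^2 = \Phi(s)^2$ is equivalent to $\Phi(s) = \pm \delta$, so for every $\delta \in (0, \delta_0)$ the two solutions are
\begin{equation*}
s_\pm(\delta) = \pm \delta/\sqrt{C} + \Ocal(\delta^2).
\end{equation*}

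Finally, I would substitute $s = s_\pm(\delta)$ into $v(s) = s\cos(k_0 x) + \tau(s)$ from Theorem \ref{thm:local-bifurcation}, using $\tau(s) = \Ocal(|s|^2)$ in $\Xcal$ together with $|s_\pm(\delta)| \leq C'|\delta|$ for some constant $C' > 0$ to absorb $\tau(s_\pm(\delta))$ into $\Ocal(\delta^2)$ in the $\Xcal$-norm. Rewriting $1/\sqrt{C} = \sqrt{12k_0^2/((g+k_0^2)(8g+41k_0^2))}$ then gives exactly \eqref{eq:expansion-bifurcation}, with the two signs reflecting the pitchfork symmetry of the branch: $s_+(\delta)$ and $s_-(\delta)$ yield periodic solutions related by the half-period translation $x \mapsto x + \pi/k_0$.
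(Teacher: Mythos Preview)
Your proposal is correct and follows exactly the approach sketched in the paper: the paper simply says to resolve $\delta^2 = Cs^2 + \Ocal(|s|^3)$ for $s$ and substitute into the expansion of $v(s)$, and you have carefully spelled out how this inversion works via the smooth extension of $\psi(s) = C + r(s)/s^2$ and the inverse function theorem applied to $\Phi(s) = s\sqrt{\psi(s)}$. The added rigour is welcome, and the remark on the $\pm$ signs corresponding to the two halves of the pitchfork is a nice touch not made explicit in the paper.
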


\section{Global bifurcation}\label{sec:global}

In this section, the local bifurcation curve found in Theorem \ref{thm:local-bifurcation} is extended globally by using analytic global bifurcation theory. Eventually, we study the behaviour of the global bifurcation curve: nodal properties are used to rule out the possibility of the branch to form a closed loop. Furthermore and heavily relying on the Hamiltonian structure of \eqref{eq:steady-state-equation}, we show that the minimum of the solutions along the bifurcation branches approaches $-1$, that means asymptotically the solutions on the global bifurcation branch approach a state of film rupture.

\begin{theorem}[Global bifurcation]\label{thm:global-bifurcation}
    Let \(k_0>0\) and 
    \begin{equation*}
        \{(v(s),M(s)) : s \in (-\eps,\eps)\} \subset \Ucal \times \R
    \end{equation*}
    the bifurcation branch obtained in Theorem \ref{thm:local-bifurcation}. Then, there exists a globally defined continuous curve
    \begin{equation*}
        \{(v(s),M(s)) : s \in \R\} \subset \Ucal \times \R
    \end{equation*}
    consisting of smooth solutions to the bifurcation problem \eqref{eq:bifurcation-problem}, which is bounded in \(\Xcal \times (0,\infty)\) and such that
    \begin{equation*}
        \inf_{s\in \R} \min_{x\in \bigl[-\tfrac{\pi}{k_0},\tfrac{\pi}{k_0}\bigr]}v(s) = -1.
    \end{equation*}
\end{theorem}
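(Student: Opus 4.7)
My approach is the classical scheme for analytic global bifurcation from \cite{buffoni2003}. As a first step I would verify the structural hypotheses: the operator $F : \Ucal \times \R \to \Ycal$ defined in Section \ref{sec:local} is real-analytic since each nonlinearity is a composition of rational and logarithmic maps with a function bounded away from $-1$, and $\partial_v F(v,M)$ is Fredholm of index zero at every $(v,M) \in \Ucal \times \R$ because its principal part $\partial_x^2$ is an isomorphism from $\Xcal$ onto $\Ycal$ and all other contributions are compact perturbations. Combined with the transversality already checked in the proof of Theorem \ref{thm:local-bifurcation}, the analytic global bifurcation theorem produces a continuous curve $\{(v(s),M(s)) : s \in \R\} \subset \Ucal \times \R$ of solutions that extends the local branch and satisfies one of the standard alternatives: either the curve is a closed loop, or as $s \to \pm \infty$,
\begin{equation*}
    \|v(s)\|_{\Xcal} + |M(s)| + \frac{1}{1 + \min_x v(s)(x)} \longrightarrow \infty.
\end{equation*}
Analytic regularity together with elliptic bootstrapping upgrade each $v(s)$ to a smooth function.

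Second, I would rule out the closed-loop alternative via nodal properties, which is where the evenness built into $\Xcal$ becomes crucial. The expansion \eqref{eq:expansion-bifurcation} shows that for small $s$ the solution $v(s)$ is strictly monotone on $(-\pi/k_0, 0)$, with its unique interior maximum at $x = 0$ and minimum at $x = \pm \pi/k_0$. I would show that this nodal class is preserved along the whole curve: since $v(s)$ satisfies the Hamiltonian system \eqref{eq:first-order-system}, its phase-space orbit lies on a single connected level set of $\mathcal{H}$ around the center $(v_l,0)$, and by Lemma \ref{lem:orbits-Hamiltonian-system} this level set is a simple closed curve. Hence $v(s)$ admits exactly one interior extremum of each type per period, with positions pinned by evenness. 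A standard open--closed argument combined with the Crandall--Rabinowitz local uniqueness then excludes any return of the curve to the trivial branch.

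Third, I would establish the uniform bound in $\Xcal \times (0,\infty)$ along the curve, which together with the dichotomy above forces $\inf_s \min_x v(s) = -1$. For $M(s)$, testing \eqref{eq:shifted-steady-state-equation} against $\cos(k_0 x)$ and integrating by parts extracts an identity in which $M(s)$ is essentially determined by the first Fourier coefficient of $v(s)$ against the nonlinearity; uniform control of this nonlinearity on compact subsets of $(-1,\infty)$ yields $0 < M(s) \leq C$ along the curve, with strict positivity passing from the bifurcation point along connectedness. For the $H^2_{\mathrm{per}}$ bound, the conservation law $\mathcal{H}(v(s),\partial_x v(s)) \equiv \mathrm{const}$ expresses $(\partial_x v(s))^2$ pointwise as an explicit function of $v(s)$, $M(s)$ and the integration constant; combined with an $L^\infty$ bound on $v(s)$ from above coming from the phase portrait in Lemma \ref{lem:orbits-Hamiltonian-system}, this controls $\partial_x v(s)$ in $L^\infty$ on any sub-arc bounded away from $\{v = -1\}$, and inserting the result into the equation bounds $\partial_x^2 v(s)$ in $L^p$ for every $p < \infty$.

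The step I expect to be the main obstacle is maintaining these a priori bounds uniformly as $\min v(s)$ approaches $-1$: both the Hamiltonian and the nonlinearity of \eqref{eq:shifted-steady-state-equation} develop singularities at $v = -1$ precisely in the regime that film rupture probes. The key will be to exploit the mean-zero constraint defining $\Xcal$ to absorb these singularities in an integrated rather than pointwise sense, so that the $H^2_{\mathrm{per}}$ (in fact $W^{2,p}_{\mathrm{per}}$) bounds persist along the whole curve although any $W^{2,\infty}$ bound must deteriorate. Once this is achieved, the global alternative leaves only the film-rupture scenario $\min v(s) \to -1$, completing the proof.
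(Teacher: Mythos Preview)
Your overall architecture matches the paper's: analytic global bifurcation produces a curve with the standard alternatives, nodal properties rule out the closed loop, and uniform bounds in $\Xcal\times(0,\infty)$ force the film-rupture alternative. Your phase-space argument for the nodal class (orbits lie on simple closed level sets of $\Hcal$, so exactly one interior maximum and minimum per period, pinned by evenness) is a reasonable alternative to the paper's cone formulation via \cite[Thm.~9.2.2]{buffoni2003}.

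The genuine gap is in your $M$-bound. Testing \eqref{eq:shifted-steady-state-equation} against $\cos(k_0 x)$ yields
\begin{equation*}
    (k_0^2+g)\,\widehat v_1 \;=\; M(s)\int_{-\pi/k_0}^{\pi/k_0}\omega(v(s))\cos(k_0 x)\de x,
\end{equation*}
but you cannot invoke ``uniform control of the nonlinearity on compact subsets of $(-1,\infty)$'' here: the conclusion you want is precisely that $v(s)$ leaves every such compact set, so $\omega(v(s))$ is unbounded and the integral on the right can degenerate. This is circular, and no bound on $M$ follows. The paper's mechanism is entirely different and uses no Fourier identity. For the upper bound one rescales to $\tilde\Hcal=\tfrac12 w^2+\Gcal(v)/M$, notes that $\Gcal/M\to\Gcal_\infty$ uniformly in $K$ on compacta, and shows via the period relation $T_{\Hcal}(ME)=M^{-1/2}T_{\tilde\Hcal}(E)$ that the maximal period of admissible orbits tends to $0$ as $M\to\infty$, contradicting the fixed period $2\pi/k_0$. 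For the lower bound (and for the $K$-bound) one shows that the level set through $(-1,0)$ has $v_{\max}<0$, contradicting mean zero. These phase-plane period and level-set arguments are the technical core you are missing. A related structural point: the paper proves film rupture \emph{first}, by contradiction with the easier conditional bounds (assuming $\min v\geq c>-1$), and only then establishes the unconditional $H^2_{\mathrm{per}}$ bound; the latter in fact \emph{uses} film rupture to locate the singularity at $x_{\min}=\pm\pi/k_0$ and then shows $1+v(s)\geq |x-x_{\min}|^2$ there via a lower bound $\partial_x^2 v\geq 2$ from the equation, which makes $|\log(1+v)|^p$ integrable. Your plan to do bounds first and deduce film rupture afterwards is logically possible but strictly harder, and your proposed resolution (``absorb singularities via the mean-zero constraint'') does not identify this mechanism.
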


The proof of Theorem \ref{thm:global-bifurcation} consists of multiple steps. First, using \cite[Thm. 9.1.1]{buffoni2003}, we construct a global extension of the local bifurcation curve that may be unbounded, a closed loop or approach the boundary of the phase space, c.f. Proposition \ref{prop:global-bifurcation}. We will then proceed to rule out that it is a closed loop in Proposition \ref{prop:no-closed-loop}. Finally, we will obtain uniform bounds for \((v(s),M(s))\) in \(\Xcal\times (0,\infty)\) in Lemma \ref{lem:uniform-bound-M} and Proposition \ref{prop:uniform-bound-H2}.

The results of Theorem \ref{thm:global-bifurcation} are also found in a numerical treatment of the problem using a numerical continuation algorithm. These findings are represented in the following Figure \ref{fig:numerical-continuation-plots}.

\begin{figure}[H]
    \centering
        \includegraphics[width=0.8\textwidth]{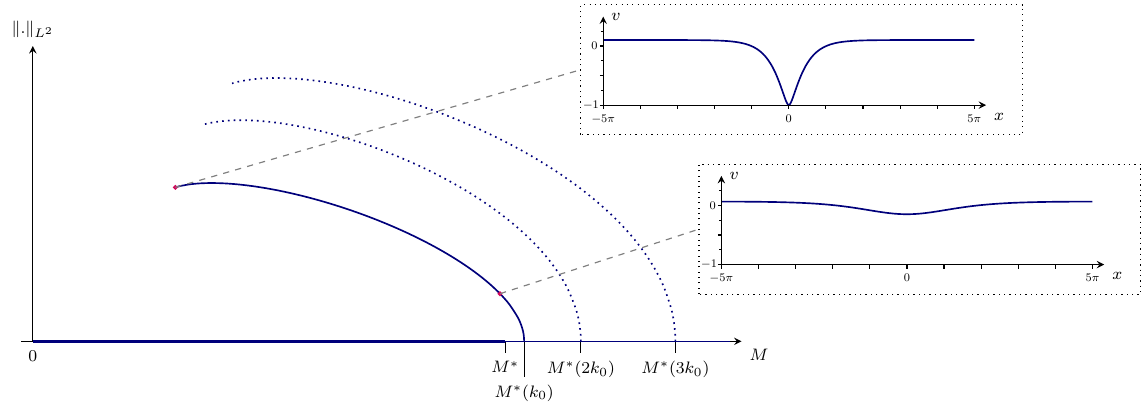}
    \caption{Numerical plot of the global bifurcation diagram and the associated solution plots at selected points on the branch. Notice that for illustration purposes, we plot the periodic solutions shifted by half a period (otherwise the minimum would occur at the boundary). The bifurcation curve is given for $g = 1$ on a domain $[-5\pi,5\pi]$ using Neumann boundary conditions. The dotted lines mark the secondary and tertiary bifurcation branches that emerge at \(M^*(2k_0)\) and \(M^*(3k_0)\) and contain solutions with half and one-third period.  The continuation was performed with pde2path, see \cite{uecker2014,uecker2021} and \cite{zotero-3112}.}
    \label{fig:numerical-continuation-plots}
\end{figure}

\subsection{Existence of a global bifurcation branch}\label{sec:global-bifurcation}

Next, we show that there exists a global extension of the local bifurcation branch established in Theorem \ref{thm:local-bifurcation}. To achieve this, we use analytic global bifurcation theory \cite{buffoni2003}.

\begin{proposition}\label{prop:global-bifurcation}
    Let \(k_0>0\) and 
    \begin{equation*}
        \{(v(s),M(s)) : s \in (-\eps,\eps)\} \subset \Ucal \times \R
    \end{equation*}
    the bifurcation branch obtained in Theorem \ref{thm:local-bifurcation}. Then, there exists a globally defined continuous curve
    \begin{equation*}
        \{(v(s),M(s)) : s \in \R\} \subset \Ucal \times \R
    \end{equation*}
    consisting of smooth solutions to the bifurcation problem \eqref{eq:bifurcation-problem} and extending the local bifurcation branch such that at least one of the following conditions is satisfied:
    \begin{enumerate}[label=(C\arabic*)]
        \item\label{it:global-condition-1} \(\|(v(s),M(s))\|_{\Xcal\times \R} \longrightarrow +\infty\) as \(s\to \pm \infty\);
        \item\label{it:global-condition-2} \((v(s),M(s))\) approaches the boundary of \(\Ucal\);
        \item\label{it:global-condition-3} \((v(s),M(s))\) is a closed loop.
    \end{enumerate}
\end{proposition}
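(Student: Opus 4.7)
The plan is to apply the real-analytic global bifurcation theorem of Buffoni--Toland \cite[Thm.~9.1.1]{buffoni2003} to the operator $F : \Ucal \times \R \to \Ycal$ and the local curve provided by Theorem~\ref{thm:local-bifurcation}. This requires the verification of three hypotheses: (i) $F$ is real analytic; (ii) $\partial_v F(v,M)$ is Fredholm of index zero at every solution; and (iii) sequences of solutions that stay bounded in $\Xcal \times \R$ and bounded away from $\partial \Ucal$ are relatively compact in $\Ucal \times \R$. Once these are established, the theorem produces a continuous global extension that satisfies at least one of the alternatives \ref{it:global-condition-1}--\ref{it:global-condition-3}.

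For (i), the Sobolev embedding $H^2_{\per} \hookrightarrow C^0_{\per}$ makes the constraint $v > -1$ defining $\Ucal$ open, and it renders the pointwise nonlinearity $v \mapsto \tfrac{1}{2+v} + \log\tfrac{1+v}{2+v}$ a real-analytic Nemytskii operator into $L^2_{\per}$; the scalar $K(v)$ is the average of this nonlinearity and is analytic for the same reason. For (ii), linearising yields
\begin{equation*}
    \partial_v F(v,M)\,\varphi = \partial_x^2 \varphi + a(v,M)\,\varphi - M\,\dprb{K'(v), \varphi},
\end{equation*}
with $a(v,M) \in C^0_{\per}$ bounded. The principal part $\partial_x^2 : \Xcal \to \Ycal$ is an isomorphism, since its Fourier symbol $-(\ell k_0)^2$ does not vanish on the relevant index set $\ell \geq 1$, while the multiplication operator and the rank-one correction are compact from $\Xcal$ into $\Ycal$ via $H^2_{\per} \cembed L^2_{\per}$. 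Thus $\partial_v F(v,M)$ is a compact perturbation of an isomorphism and hence Fredholm of index zero.

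Hypothesis (iii) is the technical core. Given any sequence of solutions $(v_n, M_n)$ with $\|v_n\|_{H^2_{\per}} + |M_n| + \|(1 + v_n)^{-1}\|_{C^0_{\per}}$ uniformly bounded, $f_{M_n}(v_n)$ is uniformly bounded in $L^\infty_{\per}$, so the equation $\partial_x^2 v_n = -f_{M_n}(v_n)$ upgrades this to uniform $W^{2,\infty}_{\per}$-bounds. Compact embedding yields, along a subsequence, $v_n \to v_\infty$ in $C^1_{\per}$ and $M_n \to M_\infty$, hence $f_{M_n}(v_n) \to f_{M_\infty}(v_\infty)$ in $L^2_{\per}$, and the equation then forces $v_n \to v_\infty$ in $H^2_{\per}$, with $v_\infty \in \Ucal$. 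Smoothness of each solution on the curve follows by bootstrapping \eqref{eq:shifted-steady-state-equation}, whose right-hand side is smooth as long as $v > -1$.

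With (i)--(iii) in place, the global theorem applies directly and yields the continuous extension. The main obstacle I anticipate is precisely (iii): one must ensure that the two distinct ways of leaving a closed bounded subset of $\Ucal \times \R$, namely $\|(v,M)\|_{\Xcal \times \R} \to \infty$ and $\min v \to -1$, are cleanly separated so that the standard alternative \enquote{leaving every compact subset of the domain} splits into the two alternatives \ref{it:global-condition-1} and \ref{it:global-condition-2} stated here. The remaining possibility \ref{it:global-condition-3} of a closed loop is then addressed separately in Proposition~\ref{prop:no-closed-loop}.
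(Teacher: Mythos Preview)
Your proposal is correct and follows essentially the same route as the paper: both apply \cite[Thm.~9.1.1]{buffoni2003}, verify the Fredholm-index-zero property by recognising $\partial_v F(v,M)$ as a compact perturbation of the invertible operator $\partial_x^2 : \Xcal \to \Ycal$, and establish the compactness hypothesis by bootstrapping the elliptic equation. Minor differences are that you make the analyticity of $F$ explicit (the paper leaves this implicit), and the paper additionally records that $\dot M(s) \not\equiv 0$ on the local branch, which is required by the Buffoni--Toland framework but follows immediately from the expansion \eqref{eq:expansions}.
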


\begin{remark}
    Notice that the conditions \ref{it:global-condition-1}-\ref{it:global-condition-3} at infinity in Proposition \ref{prop:global-bifurcation} are not mutually exclusive. This can for example be seen in \cite{ehrnström2019}, where both \ref{it:global-condition-1} and \ref{it:global-condition-2} occur simultaneously. The bifurcation branch approaches the boundary and looses regularity, which leads to a norm blow-up.
\end{remark}

\begin{proof}
    To prove Proposition \ref{prop:global-bifurcation}, we use \cite[Thm. 9.1.1]{buffoni2003}. We already know that \((0,M)\in \Ucal\) for all \(M\in \R\), that \(\dot{M}(s) \not\equiv 0\) on \((-\eps,\eps)\) by \eqref{eq:expansions} and we have checked condition (G3) in \cite[Ch. 9]{buffoni2003} by verifying \ref{it:local-2} and \ref{it:local-3} in the proof of Theorem \ref{thm:local-bifurcation}. Hence, it remains to check the following conditions.
    \begin{enumerate}[label=(\arabic*)]
        \item\label{it:global-1} \(\partial_v F(v,M)\) is a Fredholm operator of index zero, whenever \((v,M)\in \Ucal\times \R\) is in the set
        \begin{equation*}
            \Scal:=\{(v,M) \in \Ucal \times \R \,:\, F(v,M) = 0\}
        \end{equation*}
        of solutions to the bifurcation problem \eqref{eq:bifurcation-problem};
        \item\label{it:global-2} all bounded closed subsets of the solution set \(\Scal\) are compact in \(\Xcal\times \R\).
    \end{enumerate}
    To check \ref{it:global-1}, we compute
    \begin{equation*}
        \partial_v F(v,M)v_0 = \partial_x^2v_0 + \Bigl(M\frac{1}{(v+1)(v+2)^2}-g\Bigr)v_0 - M \fint_{-\frac{\pi}{k_0^2}}^{-\frac{\pi}{k_0^2}} \frac{1}{(v+1)(v+2)^2}v_0 \de x.
    \end{equation*}
    Note that this is a nonlocal operator. Since \(\partial_x^2 \colon \Xcal \to \Ycal\) is invertible, we may write
    \begin{equation*}
        \partial_v F(v,M) = \partial_x^2 \partial_x^{-2}\partial_v F(v,M)
    \end{equation*}
    and obtain that
    \begin{equation*}
        \tilde{L}_{v,M}v_0 := \partial_x^{-2}\partial_v F(v,M)v_0 = v_0 + \partial_x^{-2}\left[\Bigl(M\frac{1}{(v+1)(v+2)^2}-g\Bigr)v_0 - M \fint_{-\frac{\pi}{k_0^2}}^{-\frac{\pi}{k_0^2}} \frac{1}{(v+1)(v+2)^2}v_0 \de x\right].
    \end{equation*}
    Since \(\partial_x^{-2}\colon \Xcal \to \Xcal\) is  compact, we find that \(\tilde{L}_{v,M}\colon \Xcal \to \Xcal\) is of the form identity plus a compact operator. Hence, we can apply \cite[Thm. 2.7.6]{buffoni2003} to obtain that \(\tilde{L}_{v,M}\) is a Fredholm operator of index zero. Since \(\partial_x^2\colon \Xcal \to \Ycal\) is also Fredholm of index zero by invertibility, this shows that the composition \(\partial_v F(v,M) = \partial_x^2\tilde{L}_{v,M}\) is a Fredholm operator of index zero as well, see \cite[Thms. 16.5 and 16.12]{müller2007}.

    By standard elliptic regularity theory, solutions to the bifurcation equation \eqref{eq:bifurcation-problem} are smooth. Thus, the second statement follows by compact embedding.

    This proves both conditions \ref{it:global-1} and \ref{it:global-2} and  the statement follows from \cite[Thm. 9.1.1]{buffoni2003}.
\end{proof}

\subsection{Behaviour of the global bifurcation branch}\label{sec:boundary-behaviour}

In this step, we study the eventual behaviour of the global bifurcation curve obtained in Theorem \ref{thm:global-bifurcation}. First, we will rule out that the curve is a closed loop, hence alternative \ref{it:global-condition-3} cannot occur. Next, we will show that at least condition \ref{it:global-condition-2} does occur in the form of the solution approaching the film-rupture condition
\begin{equation*}
    \inf_{s\in \R} \min_{x} v(s) = -1.
\end{equation*}

\begin{proposition}\label{prop:no-closed-loop}
    Condition \ref{it:global-condition-3} in Proposition \ref{prop:global-bifurcation} does not occur.
\end{proposition}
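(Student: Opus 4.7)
The approach is to propagate a strict nodal pattern along the global bifurcation curve and then to derive a contradiction from the behaviour of the pointwise evaluation functional $s \mapsto v(s)(0)$ under the closed-loop hypothesis.

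Define the nodal set
\[
    \mathcal{N} := \bigl\{(v,M) \in \mathcal{U}\times\R \,:\, F(v,M) = 0 \text{ and } \partial_x v > 0 \text{ on } \bigl(-\tfrac{\pi}{k_0},0\bigr)\bigr\}.
\]
The expansion $v(s) = s\cos(k_0 x) + \mathrm{O}(s^2)$ from Theorem \ref{thm:local-bifurcation} shows that the local branch for small $s > 0$ lies in $\mathcal{N}$. The first step is to verify that $\mathcal{N}$ is relatively open and closed within the set of nontrivial branch solutions. Openness is immediate from continuity of the branch in $H^2_\mathrm{per} \hookrightarrow C^1$. For closedness, take a sequence $(v_n, M_n) \in \mathcal{N}$ converging to a nontrivial $(v_*, M_*)$ on the branch, so that $\partial_x v_* \geq 0$ on $(-\pi/k_0, 0)$. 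Should $\partial_x v_*(x_0) = 0$ at some interior $x_0$, the phase orbit of $v_*$ hits the $w = 0$ axis at the turning point $(v_*(x_0), 0)$. By the Hamiltonian structure of Lemma \ref{lem:orbits-Hamiltonian-system}, every nontrivial periodic orbit meets the $w = 0$ axis at exactly two points per period, and $\partial_x v$ strictly changes sign at each turning point; this contradicts the weak monotonicity of $v_*$ near $x_0$. The only remaining possibility is that $v_*$ is a constant solution, in which case the mean-zero condition forces $v_* \equiv 0$, contradicting nontriviality. Hence $(v_*, M_*) \in \mathcal{N}$ and $\mathcal{N}$ is closed.

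Now assume, for contradiction, that the global branch is a closed loop with minimal period $s_* > 0$. Closure forces the loop to pass through a trivial solution $(0, M^{\#})$. The Fourier computation of $\partial_v F(0, M^{\#})$ from the proof of Theorem \ref{thm:local-bifurcation} shows the admissible values are $M^{\#} = M^* + 4\ell^2 k_0^2$ for $\ell \in \mathbb{N}$. For $\ell \geq 2$, the kernel element $\cos(\ell k_0 x)$ changes sign strictly inside $(-\pi/k_0, 0)$, so the Crandall--Rabinowitz branch emanating from such $M^{\#}$ immediately leaves $\mathcal{N}$; nodal invariance then excludes a return at these points. Hence the loop must return at $(0, M^*(k_0))$, i.e.\ $v(s_*) = 0$. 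Consider the functional $\tau(s) := v(s)(0)$; real-analyticity of $s \mapsto v(s)$ into $H^2_\mathrm{per}$ provided by analytic global bifurcation, combined with the continuous linear evaluation at $x = 0$, makes $\tau \colon \R \to \R$ real-analytic and $s_*$-periodic. Nodal invariance ensures that $v(s)$ attains its maximum at $x = 0$ for every $s \in (0, s_*)$, and the mean-zero condition forces this maximum to be strictly positive; thus $\tau > 0$ on $(0, s_*)$ with $\tau(0) = \tau(s_*) = 0$. The local expansion yields $\tau'(0) = 1$. However, $\tau \geq 0$ near $s_*$ with $\tau(s_*) = 0$ makes $s_*$ a local minimum of the differentiable function $\tau$, so $\tau'(s_*) = 0$. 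Periodicity gives $\tau'(0) = \tau'(s_*)$, yielding the contradiction $1 = 0$.

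The most delicate step is the closedness of $\mathcal{N}$, where the Hamiltonian phase-plane analysis of Lemma \ref{lem:orbits-Hamiltonian-system}, specifically the strict sign change of $\partial_x v$ at turning points of nontrivial periodic orbits, is essential to rule out degenerate limits with additional interior zeros of the derivative.
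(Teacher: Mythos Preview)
Your overall strategy—propagating a strict nodal property along the branch and then deriving a contradiction from periodicity of $\tau(s)=v(s)(0)$—is in the same spirit as the paper's approach via the Buffoni--Toland cone theorem, and your closedness argument for $\Ncal$ via the Hamiltonian phase plane is correct: at an interior zero of $w=\partial_x v_*$ that is not a fixed point one has $w'\neq 0$, so $w$ changes sign, contradicting $\partial_x v_*\geq 0$.

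The genuine gap is your openness claim. You assert that ``openness is immediate from continuity of the branch in $H^2_\mathrm{per}\hookrightarrow C^1$,'' but this is false. Every $v_*\in\Ncal$ satisfies $\partial_x v_*(0)=\partial_x v_*\bigl(-\tfrac{\pi}{k_0}\bigr)=0$ by evenness and periodicity, so $\partial_x v_*$ is arbitrarily small near the endpoints; a $C^1$-close solution $v$ may therefore have $\partial_x v<0$ in a one-sided neighbourhood of an endpoint. Ruling this out is exactly where the paper invests its effort: one needs control of the \emph{second} derivative at the endpoints. The paper first bootstraps to $H^3_\mathrm{per}$-convergence via elliptic regularity (so that $\partial_x^2 v_n\to\partial_x^2 v$ uniformly), then observes that if $\partial_x v(x^*)=\partial_x^2 v(x^*)=0$ at some point, the differentiated equation $\partial_x^3 v+f_M'(v)\partial_x v=0$ and ODE uniqueness force $\partial_x v\equiv 0$. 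This yields $\partial_x^2 v_*(0)<0$ and $\partial_x^2 v_*\bigl(-\tfrac{\pi}{k_0}\bigr)>0$ strictly, which then transfers to nearby solutions and gives openness. Without this step your open--closed argument does not go through, and the propagation of the nodal property to all of $(0,s_*)$—on which your final $\tau'(0)=1$ versus $\tau'(s_*)=0$ contradiction rests—is not justified.

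A secondary point: your argument that the loop cannot return to the trivial line at any $M^\#\neq M^*(k_0)$ via secondary kernels is reasonable, but you also need to exclude a return to $(0,M^*(k_0))$ itself at some $s_1\in(0,s_*)$, and the case where $M(s_1)$ is a non-bifurcation value (so the implicit function theorem forces $v\equiv 0$ on an $s$-interval). The paper sidesteps all of this by verifying the four hypotheses of \cite[Thm.~9.2.2]{buffoni2003} directly, which packages the exclusion of the closed-loop alternative once the cone conditions—in particular the openness condition you are missing—are checked.
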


In order to prove this proposition, we use \cite[Thm. 9.2.2]{buffoni2003} on global bifurcations in cones. We define the cone
\begin{equation}\label{eq:cone}
    \Kcal = \Bigl\{v \in \Xcal\,:\, v \text{ non-decreasing on }\bigl(-\tfrac{\pi}{k_0},0\bigr)\Bigr\}.
\end{equation}

\begin{proof}
    Following \cite[Thm. 9.2.2]{buffoni2003}, we need to check the following conditions.
    \begin{enumerate}[label=(\arabic*)]
        \item\label{it:nodal-1} \(\Kcal\) is a cone in \(\Xcal\);
        \item\label{it:nodal-2} there is \(\eps >0\) such that the local bifurcation branch satisfies \(v(s)\in \Kcal\) for \(0\leq s < \eps\);
        \item\label{it:nodal-3} if \(\xi \in \ker\partial_v F(0,M)\cap \Kcal\) for some \(M\in \R\), then \(\xi = \alpha\xi_0\) for some \(\alpha \geq 0\) and \(M=M^\ast(k_0)\).
        \item\label{it:nodal-4} \(\{(v,M)\in \Scal : v\neq 0\}\cap (\Kcal\times \R)\) is open in \(\Scal\).
    \end{enumerate}
    Notice that the last condition implies condition (d) of \cite[Thm. 9.2.2]{buffoni2003}.

    Indeed, \(\Kcal\) is a cone in \(\Xcal\) since it is closed and invariant under multiplication by non-negative scalars. For \ref{it:nodal-3} note that the Fourier symbol of \(L_{0,M}=\partial_v F(0,M)\) is given by
    \begin{equation*}
        \hat{L}_{0,M}(\ell) = -k_0^2\ell^2 + \frac{1}{4}M-g.
    \end{equation*}
    Since \(L_{0,M}\) operates diagonally on Fourier modes, for every \(M\) the kernel is given by the span of those Fourier modes \(\cos(k_0\ell \cdot)\) such that \(\hat{L}_{0,M}(\ell) = 0\). In particular, since \(\ell \geq 1\), each kernel is at most one-dimensional. The only Fourier mode to be in the kernel is given by the mode for \(\ell = 1\), that is \(\xi_0 = \cos(k_0\cdot)\). But \(\xi_0 \in \ker L_{0,M}\) if and only if \(M = M^\ast(k_0)\). Hence, condition \ref{it:nodal-3} is satisfied.

    To prove \ref{it:nodal-2} and \ref{it:nodal-4}, we follow roughly the ideas presented in \cite[Sec. 9.3]{buffoni2003}. We first prove \ref{it:nodal-4}. Let \((v,M) \in \Scal\cap (\Kcal\times \R)\), \((v,M)\neq 0\), i.e. we know that
    \begin{equation*}
        F(v,M) = \partial_x^2 v + f_M(v) = 0.
    \end{equation*}
    Assume by contradiction that \((v,M)\) is not in the interior of \(\Scal\cap (\Kcal\times \R)\). Then there exists a sequence \((v_n,M_n)\in \Scal\setminus (\Kcal \times \R)\) such that
    \begin{equation*}
        (v_n,M_n) \longrightarrow (v,M) \quad \text{in } \Xcal \times \R.
    \end{equation*}
    Since \(v_n\notin \Kcal\) and \(\Xcal\) embeds into \(C^1\), there further exists \(x_n \in \bigl(-\tfrac{\pi}{k_0},0\bigr)\) such that \(v_n'(x_n) < 0\). We have that \(v_n'(0) = 0 = v_n'\bigl(-\frac{\pi}{k_0}\bigr)\) using that \(v_n\) is even and periodic. Thus, we may  assume that \(x_n\in \bigl(-\tfrac{\pi}{k_0},0\bigr)\) is chosen such that \(v_n''(x_n) = 0\). Keep in mind that since \((v_n,M_n)\) is a solution to the bifurcation problem, \(v_n\) is smooth.

    Now, we may extract a convergent subsequence (not relabelled) and \(x^*\in \bigl[-\tfrac{\pi}{k_0},0\bigr]\) such that \(x_n \to x^*\). Since \(v_n\) converges to \(v\) in \(\Xcal\) and hence, by Sobolev embedding, in particular it holds \(v_n' \longrightarrow v'\) uniformly, we have
    \begin{equation*}
        0\leq v'(x^*) = \lim_{n\to \infty} v_n'(x_n) \leq 0,
    \end{equation*}
    where we use that \(v\in \Kcal\). Thus, \(v'(x^*) = 0\). To obtain that the second derivatives converge uniformly, notice that by standard elliptic regularity theory, see \cite[Thm. 6.3.2]{evans2010}, we obtain
    \begin{equation}\label{eq:elliptic-reg-estimate}
        \|v_n-v\|_{H^3_{\mathrm{per}}} \leq C\Bigl( \|v_n-v\|_{L^2_{\mathrm{per}}} + \|f_{M_n}(v_n)-f_M(v)\|_{H^1_{\mathrm{per}}} \Bigr).
    \end{equation}
    Since \(f \colon (-1,\infty)\to \R\) is smooth, the operator \(v \mapsto f_M(v)\) is a locally Lipschitz continuous Nemitski operator on \(\{v\in H^1_{\mathrm{per}}\,:\, v > c\}\) for every \(c>-1\), see \cite[Thm. 5.5.1]{runst1996}. Hence, we obtain the estimate
    \begin{equation*}
    \begin{split}
        \|f_{M_n}(v_n)-f_M(v)\|_{H^1_{\mathrm{per}}} & \leq \|f_{M_n}(v_n)-f_M(v_n)\|_{H^1_{\mathrm{per}}} + \|f_{M}(v_n)-f_M(v)\|_{H^1_{\mathrm{per}}} \\
        &\leq C\|v_n-v\|_{H^1_{\mathrm{per}}} + C|M_n-M|\sup_{n}\|v_n\|_{H^1_\mathrm{per}},
    \end{split}
    \end{equation*}
    since for $n$ sufficiently large,  $v_n$ satisfies 
    \begin{equation*}
        -1 < -\frac{1}{2} + \frac{1}{2}\min_{x\in \bigl[-\tfrac{\pi}{k_0},\tfrac{\pi}{k_0}\bigr]} v(x) \leq v_n(x) \leq 2 \max_{x\in \bigl[-\tfrac{\pi}{k_0},\tfrac{\pi}{k_0}\bigr]} v(x) < \infty.
    \end{equation*}
    We conclude that \(v_n\) converges to \(v\) in \(H^3_{\per}\) and by the Sobolev embedding, we obtain uniform convergence of \(v_n''\) to \(v\). Combining this with \(v_n''(x_n)=0\), it holds
    \begin{equation*}
        v''(x^*) = 0.
    \end{equation*}
    Differentiating the bifurcation equation for \(v\) with respect to \(x\) yields
    \begin{equation}\label{eq:elliptic-ODE-v'}
        \partial_x^3 v + f_M'(v)\partial_x v = 0.
    \end{equation}
    In particular, the non-local term in \(f_M(v)\) vanishes. Equation \eqref{eq:elliptic-ODE-v'} is an ordinary differential equation for \(\partial_x v\) with regular coefficients. Due to uniqueness of solutions to the initial-value problem with initial value \(v'(x^*)=0\) and $v''(x^\ast) = 0$, we obtain \(v\equiv 0\). This is a contradiction to the choice of \(v \in \Scal \setminus\{0\}\) and we obtain condition \ref{it:nodal-4}.

    Eventually, we prove condition \ref{it:nodal-2}. Recalling \eqref{eq:expansions}, it holds that
    \begin{equation*}
        v(s) = s \cos(k_0 x) + \tau(s)
    \end{equation*}
    with $\tau = \Ocal(\vert s \vert^2)$ in $\Xcal$. This is in $\Kcal$ if
    \begin{equation*}
        \partial_x v(s) = s k_0 \sin(k_0 x) + \partial_x \tau(s) \geq 0
    \end{equation*}
    for all $x \in \bigl(-\tfrac{\pi}{k_0},0\bigl)$. In order to be able to use uniform bounds in \(s\), we note that this is equivalent to
    \begin{equation*}
        \partial_x \frac{v(s)}{s} \geq 0
    \end{equation*}
    for all $x \in \bigl(-\tfrac{\pi}{k_0},0\bigl)$. To simplify notation, for \(s>0\) define \(\tilde{v}(s) = \tfrac{v(s)}{s}\) and $\tilde{\tau}(s) = \tfrac{\tau(s)}{s} = \Ocal(|s|)$ in $\Xcal$.
    
    We claim that it is enough to show that \(\partial_x^2 \tilde{v}(s) > \kappa >0\) uniformly for \(s\) small enough and in a neighborhood of \(x=-\frac{\pi}{k_0}\), while \(\partial_x^2 \tilde{v}(s) < - \kappa <0\) uniformly for \(s\) small enough in a neighborhood of \(x=0\). Since by the same regularity argument as in the proof of condition \ref{it:nodal-4}, we obtain uniform \(H^3\)-bounds for \(v(s)\), it is enough to show that \(\partial_x^2\tilde{v}(s)|_{x=-\tfrac{\pi}{k_0}}>\kappa >0\) and \(\partial_x^2\tilde{v}(s)|_{x=-0}<-\kappa <0\) uniformly for \(0\leq s < \eps\).

    Indeed, since \(\partial_x \tilde{v}(s)|_{x=0} = \partial_x \tilde{v}(s)|_{x=-\tfrac{\pi}{k_0}} = 0\) as \(\tilde{v}(s)\) is even and periodic, there exists \(\delta >0\) such that \(\partial_x \tilde{v}(s) \geq 0\) on \(\bigl[-\tfrac{\pi}{k_0},-\tfrac{\pi}{k_0}+\delta\bigr]\cup [-\delta,0]\) for all \(0\leq s < \eps\) and 
    \begin{equation*}
        \partial_x \tilde{v}(s)|_{x=-\tfrac{\pi}{k_0}+\delta} \geq \frac{1}{2}\delta\kappa>0, \quad \text{and} \quad \partial_x \tilde{v}(s)|_{x=-\delta} \geq \frac{1}{2}\delta\kappa>0.
    \end{equation*}
    Recalling that \(\tilde{v}(s) = \xi_0 + \tilde{\tau}(s)\) and using that \(\partial_x\xi_0 > c >0\) on \(\bigl[-\tfrac{\pi}{k_0}+\delta,-\delta\bigr]\) together with \(\tilde{\tau} = \Ocal(s)\) in $\Xcal\hookrightarrow C^1(\R)$, we may choose \(\eps>0\) even smaller to obtain
    \begin{equation*}
        \partial_x v(s) \geq 0 \text{ on } \bigl[-\tfrac{\pi}{k_0},0\bigr] \text{ for all } 0\leq s < \eps.
    \end{equation*}

    We are left to prove the bounds for the second derivative. Note that \(\tilde{v}(s) = \cos(k_0x) + \tilde{\tau}(s)\) and thus $\partial_x^2 \tilde{v}(s) = -k_0^2 \cos(k_0 x) + \partial_x^2 \tilde{\tau}(s)$. Since $\cos\bigl(-k_0 \tfrac{\pi}{k_0}\bigr) = -1$ and $\cos(0) = 1$ it is sufficient to show that $\partial_x^2 \tilde{\tau}(s) = \Ocal(|s|)$ in $C^0_{\mathrm{per}}$. To do this, we show that $\tau = \Ocal(|s|^2)$ in $H^3_\mathrm{per}$ by using elliptic regularity estimates. Note that the regularity estimates previously obtained for \(v\) are not sufficient since they would only imply that \(\tau=\Ocal(s)\) in \(H^3_{\mathrm{per}}\). To improve this estimates, we observe that $\tau$ satisfies
    \begin{equation}\label{eq:elliptic-tau}
        \partial_x^2 \tau - sk_0^2 \cos(k_0 x) + f_{M(s)}\bigl(s \cos(k_0 x) + \tau(s)\bigr) = 0.
    \end{equation}
    Next, we Taylor expand $v \mapsto f_M(v)$ at $v = 0$ in $\Xcal$ which, together with the expansion \eqref{eq:expansions} for \(M(s)\), gives
    \begin{equation*}
        f_{M(s)}(v) = 0 + \Bigl(- g + \frac{1}{4}(M^\ast(k_0) + \Ocal(|s|^2))\Bigl) v + \Ocal(\|v\|_{\Xcal}^2) = + k_0^2 v + \Ocal(|s|^2\|v\|_{\Xcal}) + \Ocal(\|v\|_{\Xcal}^2).
    \end{equation*}
    Inserting this expansion into \eqref{eq:elliptic-tau} and using $v(s) = \Ocal(|s|)$ yields
    \begin{equation*}
        \partial_x^2 \tau + g_s(v(s)) = 0
    \end{equation*}
    with $g_s(v(s)) = f_{M(s)}\bigl(s\cos(k_0x)+\tau(s)) \bigr)-sk_0^2\cos(k_0x) = \Ocal(|s|^2)$ in $\Xcal$. Using the elliptic regularity estimate \eqref{eq:elliptic-reg-estimate}, c.f. the proof of \ref{it:nodal-4}, we find that
    \begin{equation*}
        \|\tau(s)\|_{H^3_{\mathrm{per}}} = \Ocal(|s|^2),
    \end{equation*}
    which completes the proof of both the claim and the proposition.
\end{proof}

\begin{remark}\label{rem:smallness-regularity}
    In fact, we remark that the remainder term $\tau(s)$ is of order $s^2$ in $H^m_\mathrm{per}$ for all $m \geq 0$. This can be found by iteratively applying the elliptic regularity estimate
    \begin{equation*}
        \|\tau(s)\|_{H^m_\mathrm{per}} \leq C (\|\tau(s)\|_{L^2_\mathrm{per}} + \|g_s(v(s))\|_{H^{m-2}_\mathrm{per}}),
    \end{equation*}
    see \cite[Thm. 6.3.2]{evans2010}. Here, we use that $g_s(v(s))$ is analytic and hence we obtain the estimate
    \begin{equation*}
        \|g_s(v(s))\|_{H^{m-2}_\mathrm{per}} \leq  C\bigl(\|\tau(s)\|_{H^{m-2}_\mathrm{per}} + |M(s)-M^*(k_0)| \|v(s)\|_{H^{m-2}_\mathrm{per}} + \|v(s)\|_{H^{m-2}_\mathrm{per}}^2\bigr) = \Ocal(|s|^2).
    \end{equation*}
\end{remark}

\begin{remark}\label{rem:nodal-property}
    We point out that the proof of \ref{it:nodal-4} shows that if a periodic solution $v$ to \eqref{eq:steady-state-equation} is non-decreasing in $(-\tfrac{\pi}{k_0},0)$, then it is strictly increasing in $(-\tfrac{\pi}{k_0},0)$. Since all solutions $v(s)$ on the global bifuration curve lie in the cone $\Kcal$, this means that the $v(s)$ have their only minima on $[-\tfrac{\pi}{k_0},\tfrac{\pi}{k_0}]$ at $x = \pm \tfrac{\pi}{k_0}$ and their only maximum at $x = 0$. These minima and maxima are invariant along the bifurcation curve and thus the solutions form a nodal pattern. This is also referred to as nodal property, see e.g.~\cite[Theorem 4.9]{ehrnström2019}.
\end{remark}

Now we have ruled out the possibility of the global bifurcation branch to form a closed loop. We show next that the curve approaches the boundary of the domain \(\Ucal\) in the sense that \(\min_{x}v(s)\) approaches \(-1\) at which point film rupture occurs.

\begin{proposition}\label{prop:film-rupture}
    Let \(\{(v(s),M(s))\in \Ucal \times \R : s \in \R\}\) the global bifurcation branch obtained in Proposition \ref{prop:global-bifurcation}. Then, as \(s\to \infty\), \(v(s)\) approaches the boundary of \(\Ucal\) in the sense that
    \begin{equation*}
        \inf_{s\in \R} \min_{x\in \bigl[-\tfrac{\pi}{k_0},\tfrac{\pi}{k_0}\bigr]}v(s) = -1.
    \end{equation*}
\end{proposition}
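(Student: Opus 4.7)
The plan is to invoke the dichotomy in Proposition \ref{prop:global-bifurcation}, excluding two of the three alternatives, so that only the approach to the boundary of $\Ucal$ remains. First, I would use the uniform bound on the Marangoni number provided by Lemma \ref{lem:uniform-bound-M} together with the uniform $H^2_{\mathrm{per}}$ bound on $v(s)$ from Proposition \ref{prop:uniform-bound-H2}. These two bounds together imply that $\|(v(s),M(s))\|_{\Xcal \times \R}$ stays bounded along the entire branch, so alternative \ref{it:global-condition-1} cannot occur. By Proposition \ref{prop:no-closed-loop}, alternative \ref{it:global-condition-3} is also ruled out. Hence alternative \ref{it:global-condition-2} must hold, namely that the curve $\{(v(s),M(s))\}$ approaches the boundary of $\Ucal \times \R$ in $\Xcal \times \R$.

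Next, I would translate this abstract statement into the pointwise condition $\inf_s \min_x v(s) = -1$. Since $v(s) \in \Ucal$ means $v(s) > -1$ pointwise, the lower bound $\inf_s \min_x v(s) \geq -1$ is automatic. To get equality, I would argue by contradiction: assume $\inf_s \min_x v(s) \geq -1 + \delta$ for some $\delta > 0$. Combined with the uniform $H^2_{\mathrm{per}}$-bound $\|v(s)\|_{H^2_{\mathrm{per}}} \leq C$, this places the entire branch inside the set
\begin{equation*}
    K_\delta := \bigl\{w \in \Xcal \,:\, \|w\|_{H^2_{\mathrm{per}}} \leq C,\ \min_x w \geq -1+\delta\bigr\}.
\end{equation*}
Exploiting the Sobolev embedding $\Xcal \hookrightarrow C^0_{\mathrm{per}}$ with embedding constant $C_S$, any $w \in \Xcal$ within $\Xcal$-distance $\delta/(2C_S)$ of an element of $K_\delta$ still satisfies $w > -1 + \delta/2 > -1$. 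Hence $\dist_{\Xcal}(K_\delta, \partial \Ucal) \geq \delta/(2C_S) > 0$, so $K_\delta$ sits strictly inside $\Ucal$, contradicting \ref{it:global-condition-2}. This forces the existence of a sequence $s_n$ with $\min_x v(s_n) \to -1$, proving the claim.

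The main obstacle here is not the deduction itself, which is essentially a bookkeeping of the global bifurcation alternatives, but rather the two auxiliary inputs Lemma \ref{lem:uniform-bound-M} and Proposition \ref{prop:uniform-bound-H2}. The former requires controlling the Marangoni parameter along the branch — plausibly by combining the Hamiltonian structure of \eqref{eq:first-order-system} with the mass constraint and the period constraint ($\tfrac{2\pi}{k_0}$-periodicity forces the orbit to have a specific period, which in the phase plane of Section \ref{sec:Hamiltonian} constrains $M$ and the integration constant $K$). The latter amounts to an $L^\infty$-bound on $v(s)$ from above, followed by elliptic bootstrap in \eqref{eq:shifted-steady-state-equation}: once $v(s)$ is bounded above and below away from $-1$ (with the lower bound potentially degenerating, which is fine for $H^2$ bounds but not $W^{2,\infty}$), the right-hand side $f_{M(s)}(v(s))$ is bounded in $L^\infty$, hence $\partial_x^2 v(s)$ is bounded in $L^\infty$, giving the required $H^2$ bound uniformly in $s$.
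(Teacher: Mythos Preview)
Your overall strategy---rule out \ref{it:global-condition-1} and \ref{it:global-condition-3}, forcing \ref{it:global-condition-2}, then translate to the pointwise statement---is sound, but there is a circularity in the logical ordering, and the sketch you give for the auxiliary $H^2$ bound does not stand on its own.

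You invoke Proposition~\ref{prop:uniform-bound-H2} as an unconditional input to rule out alternative~\ref{it:global-condition-1}. In the paper, however, Proposition~\ref{prop:uniform-bound-H2} is proved \emph{after} Proposition~\ref{prop:film-rupture} and its proof uses the conclusion of Proposition~\ref{prop:film-rupture}: to control $\int|\log(1+v(s))|^2\,\de x$ near the minimum one needs to know that the limiting profile actually touches $-1$ there, so that $\partial_x^2 v(s)$ is forced to be large and one gets the quadratic lower bound $v(s,x)+1\geq |x-x_{\min}|^2$. As written, your argument is therefore circular. Your sketch in the last paragraph does not repair this: you claim that once $v(s)$ is bounded above and ``below away from $-1$ (with the lower bound potentially degenerating),'' the right-hand side $f_{M(s)}(v(s))$ is bounded in $L^\infty$. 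That is false precisely when the lower bound degenerates, since $f_M(v)$ contains the term $-M\log\bigl(\tfrac{1+v}{2+v}\bigr)$, which blows up as $v\to -1$. An unconditional $L^\infty$ bound on $f_M(v)$ is not available; only an $L^p$ bound for finite $p$ is, and obtaining it is exactly the delicate part of Proposition~\ref{prop:uniform-bound-H2}.

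The paper's proof sidesteps both issues by inverting the order: it assumes \emph{first}, by contradiction, that $\min_x v(s)\geq c>-1$ for all $s$, and then, \emph{under this hypothesis}, derives bounds on $M(s)$ (via a period-length argument in the Hamiltonian phase plane, showing the maximal period of orbits tends to $0$ as $M\to\infty$) and on $\|v(s)\|_{\Xcal}$ (now the $L^\infty$ bound on $f_M(v)$ is legitimate because $v\geq c>-1$). This simultaneously excludes~\ref{it:global-condition-1} and, by the hypothesis itself, \ref{it:global-condition-2}; since~\ref{it:global-condition-3} was already ruled out, one has a contradiction. In short, the bounds you need are conditional on the contradiction assumption, not unconditional, and proving them in that order is both easier and logically clean.
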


\begin{proof}
    We argue by contradiction and assume that
    \begin{equation}\label{eq:lower-bound-min-v}
        \min_{x\in \bigl[-\tfrac{\pi}{k_0},\tfrac{\pi}{k_0}\bigr]} v(s) \geq c > -1 \quad \text{for all } s\in \R.
    \end{equation}
    In this case, the alternatives of Theorem \eqref{thm:global-bifurcation} imply that \(\|v(s)\|_{\Xcal}\to+\infty\) as \(s\to \infty\) or \(|M(s)|\to \infty\) as \(s\to \infty\). We will rule out both possibilities, leading to a contradiction. The remainder of the proof is structured in three steps. In \hyperlink{film-rupture-step1}{step 1}, we establish pointwise upper bounds for the maximum and the minimum of $v(s)$ using the Hamiltonian structure of the equation. In \hyperlink{film-rupture-step2}{step 2}, we then establish lower and upper bounds on $M(s)$ using assumption \eqref{eq:lower-bound-min-v}. Finally, in \hyperlink{film-rupture-step3}{step 3}, we combine the results of steps \hyperlink{film-rupture-step1}{1} and \hyperlink{film-rupture-step2}{2} to obtain uniform bounds on the $H^2_\mathrm{per}$-norm of $v(s)$.
    
    \noindent\textbf{Step 1: pointwise bounds for $v(s)$ using the Hamiltonian system.}\linkdest{film-rupture-step1}{} We first show that we obtain pointwise upper bounds for \(v(s)\) using the fixed points of the Hamiltonian system \eqref{eq:Hamiltonian}. Recall that any periodic solution is a periodic orbit of the Hamiltonian system with Hamiltonian
    \begin{equation*}
        \Hcal(v,w) = \frac{1}{2}w^2 - \frac{g}{2} v^2 + M(1+v)\log\Bigl(\frac{1+v}{2+v}\Bigr) - MK(v)v
    \end{equation*}
    with $K(v)$ given by
    \begin{equation*}
        K(v) = \fint_{-\frac{\pi}{k_0}}^{\frac{\pi}{k_0}} \frac{1}{2+v} + \log\Bigl(\frac{1+v}{2+v}\Bigr) \de x.
    \end{equation*}
    In order to apply the results of Lemma \ref{lem:orbits-Hamiltonian-system}, we now show that $K(v)$ satisfies the estimate
    \begin{equation*}
        K(v) \leq K(0).
    \end{equation*}
    This follows from the fact that $v \mapsto \tfrac{1}{2+v} + \log\bigl(\tfrac{1+v}{2+v}\bigr) = \omega(v)$ is strictly concave and thus
    \begin{equation*}
        K(v) = \fint_{-\frac{\pi}{k_0}}^{\frac{\pi}{k_0}} \omega(v) \de x \leq \omega\left(\fint_{-\frac{\pi}{k_0}}^{\frac{\pi}{k_0}} v \de x\right)= \omega(0) = K(0),
    \end{equation*}
    by Jensen's inequality and the fact that the mean value of $v$ vanishes. Notice that we have equality if and only if \(v=0\). Since Proposition \ref{prop:no-closed-loop} rules out the possibility of the bifurcation curve to be a closed loop, we have that $v(s) \neq 0$ for $s \neq 0$. Therefore, we can always assume that $K < K(0)$. Then, Lemma \ref{lem:orbits-Hamiltonian-system} guarantees that there are two distinct fixed points $(v_l,0)$ and $(v_u,0)$ of the Hamiltonian system $\Hcal$ with $-1 < v_l < 0 < v_u$. In particular, since any periodic orbit loops around $(v_l,0)$, we have the estimate
    \begin{equation*}\label{eq:bound-min-v}
        c \leq \min_{x\in \bigl[-\tfrac{\pi}{k_0},\tfrac{\pi}{k_0}\bigr]} v(s) \leq v_l.
    \end{equation*}
    Additionally, we also find that
    \begin{equation}\label{eq:bound-max-v}
        \max_{x\in \bigl[-\tfrac{\pi}{k_0},\tfrac{\pi}{k_0}\bigr]} v(s) \leq v_u
    \end{equation}
    since the neighborhood $U$ filled with periodic orbits is bounded either by the level set of $v = -1$ or $v = v_u$ with smaller energy.
    By assumption \eqref{eq:lower-bound-min-v} we also have that $K(v) \geq K_l(c) > -\infty$.

    \noindent \textbf{Step 2: bounds for $\mathbf{M(s)}$.}\linkdest{film-rupture-step2}{} We now show that $M(s) \in (M_l,M_u)$ with constants $0< M_l < M_u < \infty$, provided we assume \eqref{eq:lower-bound-min-v}. Since $v_l$ satisfies \eqref{eq:fixed-point} and recalling Figure \ref{fig:fixed-points}, we obtain that $M \rightarrow 0$ implies that $v_l \rightarrow -1$ for any $K \in [K_l(c),K(0))$. Since we assume \eqref{eq:lower-bound-min-v}, this implies that $M(s) > M_l > 0$.

    To obtain the upper bound for \(M(s)\), we show that the maximal period of the periodic orbits of the Hamiltonian system $\Hcal$ tends to $0$ as $M \rightarrow \infty$. Since  $v(s)$ has the fixed period $\tfrac{2\pi}{k_0}$ this leads to a contradiction and shows that $M < M_u < \infty$. 
    
    To show that the maximal period vanishes, we introduce the Hamiltonian
    \begin{equation*}
        \tilde{\Hcal}(v,w) = \frac{1}{2} w^2 - \frac{g}{2M} v^2 + (1+v)\log\Bigl(\frac{1+v}{2+v}\Bigr) - K v = \frac{1}{2} w^2 + \Gcal(v)/M,
    \end{equation*}
    where we recall that $\Hcal(v,w) = \frac{1}{2}w^2 + \Gcal(v)$, see \eqref{eq:Gcal} for the definition of $\Gcal$. This Hamiltonian $\tilde{\Hcal}$ is rescaled in such a way to make the relevant dynamics of $\Hcal$ visible when $M$ is large. We now show that periodic orbits of $\tilde{\Hcal}$ have maximal period by treating $\tfrac{g}{M} v^2$ as a small perturbation for $M$ sufficiently large. The period can be calculated by
    \begin{equation*}
        T_{\tilde{\Hcal}}(E) = \sqrt{2} \int_{q_0}^{q_1} \frac{1}{\sqrt{E - \tfrac{\Gcal(v)}{M}}} \de v = \sqrt{M} \sqrt{2} \int_{q_0}^{q_1} \frac{1}{\sqrt{M E - \Gcal(v)}} \de v = \sqrt{M} T_{\Hcal}(ME),
    \end{equation*}
    where $q_0,q_1$ are the lower and upper root of $q \mapsto E - \tilde{\Hcal}(q,0)$, with $E$ being an energy level at which periodic solutions exists, see e.g. \cite{chicone1987}. Observe that \(q_0,q_1\) are also the lower and upper root of \(q\mapsto ME-\Hcal(q,0)\). Notice that
    \begin{equation*}
        \tilde{\Hcal}(v,0) = \frac{\Gcal(v)}{M} = \frac{\Hcal(v,0)}{M}
    \end{equation*}
    and thus, if $I(\tilde{\Hcal})$ is the set of all energy levels corresponding to periodic orbits in $\tilde{\Hcal}$, then $I(\Hcal) = M I(\tilde{\Hcal})$. Additionally, following the proof of Lemma \ref{lem:orbits-Hamiltonian-system} it holds that $I(\tilde{\Hcal}) = (\tilde{\Hcal}(v_l,0), \min(\tilde{\Hcal}(-1,0),\tilde{\Hcal}(v_u,0)))$ is an interval.
    
    We show next that \(T_{\tilde{\Hcal}}(E)\) is bounded from above for all \(E\in I(\tilde{\Hcal})\), all \(M\) large enough, and all \(K \in (K_l(c),K(0))\). We observe that $v \mapsto \tfrac{\Gcal(v)}{M}$ and all its derivatives converge uniformly on every compact interval to 
    \begin{equation*}
        \Gcal_\infty(v) = (1+v) \log\bigl(\tfrac{1+v}{2+v}\bigr) - Kv
    \end{equation*}
    as $M \rightarrow \infty$. Additionally, we conclude from \eqref{eq:fixed-point} and Figure \ref{fig:fixed-points} that $v_u \rightarrow \infty$ as $M \rightarrow \infty$. Hence, for $M$ sufficiently large, it holds that $\Gcal_\infty(-1) < \Gcal_\infty(v_u/2)$.

    \begin{figure}[H]
    \centering
    \includegraphics[width=0.5\textwidth]{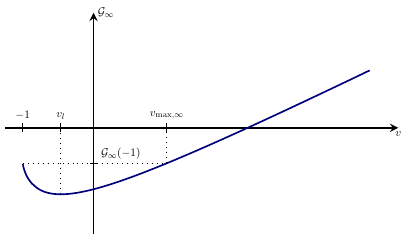}
    \caption{Plot of \(\Gcal_{\infty}\) with $K = -0.4 < K(0) \approx -0.193$.}
    \label{fig:energy-function}
\end{figure}
    
    Next, we fix such a sufficiently large $M_u < \infty$ and define $V = [-1,2v_{\mathrm{max},\infty}]$ with $\Gcal_\infty(-1) = \Gcal_\infty(v_{\mathrm{max},\infty})$ and \(v_{\mathrm{max},\infty}>-1\). The existence and uniqueness of such a $v_{\mathrm{max},\infty}$ follows since $\Gcal_\infty$ is uniformly convex, see Figure \ref{fig:energy-function}. In fact, $v_{\mathrm{max},\infty}$ is explicitly given by
    \begin{equation*}
        v_{\mathrm{max},\infty} = -\frac{2 e^K-1}{e^K-1}.
    \end{equation*}
    In particular, since $v \mapsto \tfrac{\Gcal(v)}{M}$ converges uniformly to $\Gcal_\infty$ on $V$, we can enforce that $\tfrac{\Gcal(-1)}{M} < \tfrac{\Gcal(2v_{\mathrm{max},\infty})}{M}$ and \(\tfrac{\Gcal}{M}\) is uniformly convex for \(v\in V\) uniformly for \(M>M_u\) choosing $M_u$ potentially even larger.

    Then the upper fixed point \(v_u\) satisfies \(\tfrac{\Gcal(v_u)}{M} > \tfrac{\Gcal(2 v_{\mathrm{max},\infty})}{M} > \frac{\Gcal(-1)}{M}\) since \(v_u\) is a maximum of $\Gcal$ according to Lemma \ref{lem:orbits-Hamiltonian-system} and thus of \(\tfrac{\Gcal}{M}\). This implies that there is no homoclinic orbit and the level of \(\tilde{\Hcal}(-1,0)\) encloses the set of periodic orbits.
    
    Since the level set of \((-1,0)\) is contained in \(V\times \R\), we obtain that all periodic orbits of $\tilde{\Hcal}$, which loop around $(v_l,0) \in V \times \{0\}$, are contained in the set $V \times \R$. In particular, for all $E \in I(\tilde{\Hcal})$ we find that the corresponding $q_0$ and $q_1$ are bounded away from $v_u$. Additionally, if $E \geq \tilde{\Hcal}(v_l,0) + \delta$ for $\delta$ small, the corresponding $q_0$ and $q_1$ are also bounded away from $v_l$. This yields that $T_{\tilde{\Hcal}}(E)$ is uniformly bounded for $\tilde{\Hcal}(v_l,0) + \delta \leq E \leq \tilde{\Hcal}(-1,0)$, since
    \begin{equation*}
        E - \frac{\Gcal(v)}{M} = \frac{\Gcal'(v)}{M}(v-q_1) + \Ocal(|v-q_1|^2)
    \end{equation*}
    by Taylor expansion around $v = q_1$ (similar for $q_0$) and thus $\Bigl(\sqrt{E - \tfrac{\Gcal(v)}{M}}\Bigr)^{-1}$ behaves like $\tfrac{1}{\sqrt{v}}$ for $v$ close to $q_0$ or $q_1$ and thus is integrable.

    To control $T_{\tilde{\Hcal}}(E)$ for $\tilde{\Hcal}(v_l,0) < E \leq \tilde{\Hcal}(v_l,0) + \delta$, we use that
    \begin{equation*}
        J D^2 \tilde{H}(v_l,0) = \begin{pmatrix}
            0 & 1 \\
            - \tfrac{\Gcal''(v_l)}{M} & 0
        \end{pmatrix} = \begin{pmatrix}
            0 & 1 \\
            \tfrac{g}{M} - \tfrac{1}{(v_l+1)(v_+2)^2} & 0
        \end{pmatrix}.
    \end{equation*}
    Therefore, the eigenvalues of $J D^2 \tilde{\Hcal}$ are given by
    \begin{equation*}
        \lambda_{\pm} = \pm\sqrt{\frac{1}{(v_l+1)(v_l+2)^2} - \frac{g}{M}}.
    \end{equation*}
    Using the Lyapunov subcentre theorem \cite[Thm. 4.1.8]{schneider2017}, we know that the periods of the periodic orbits as \(E\to \tilde{\Hcal}(v_l,0)\) are close to \(\frac{2\pi}{\lambda_+}\). Since \(\lambda_+\) is bounded away from zero as \(M>M_u\), the period is also uniformly (in \(M\)) bounded for \(E\in[\tilde{\Hcal}(v_l,0),\tilde{\Hcal}(v_l,0)+\delta]\). We conclude, that the period remains bounded.

    \noindent\textbf{Step 3: $\mathbf{v(s)}$ is bounded in \(\Xcal\).}\linkdest{film-rupture-step3}{} We show that \(\|v(s)\|_{L^{\infty}}\) remains uniformly bounded for all \(s\geq 0\). 
    Indeed, once we have shown this, we have that \(f_{M(s)}(v(s))\) is uniformly bounded in \(L^2_{\mathrm{per}}\), where we also make use of the fact that \(M(s) \in [M_l,M_u]\) is uniformly bounded. Now using standard elliptic regularity theory, see e.g.~\cite[Thm. 6.3.1]{evans2010}, we may conclude that \(v(s)\) is uniformly bounded in \(\Xcal\). 
    Thus, it remains to show that \(\|v(s)\|_{L^{\infty}}\) remains uniformly bounded. By \eqref{eq:bound-max-v} it is sufficient to bound the upper fixed point of the Hamiltonian system $\Hcal$. Recalling that $v_u$ solves \eqref{eq:fixed-point} and Figure \ref{fig:fixed-points} and using that $M \in (M_l,M_u)$ and $K \in (K_l(c),K(0))$ we find that $v_u$ enjoys the estimate
    \begin{equation}\label{eq:uniform-bound-L-infty}
        v_u \leq - K_l(c)\frac{M_u}{g} < \infty.
    \end{equation}
    This yields the statement.

    We conclude that both remaining alternatives \ref{it:global-condition-1} and \ref{it:global-condition-2} of Proposition \ref{prop:global-bifurcation} are ruled out and therefore obtain the contradiction that \eqref{eq:lower-bound-min-v} cannot hold. This implies that the global bifurcation curve approaches the boundary of \(U\) at which point film rupture occurs. 
\end{proof}

\section{Limit points of the bifurcation branch are weak stationary film-rupture solutions}\label{sec:film-rupture}

We study the limit points of the global bifurcation branch obtained in Proposition \ref{prop:global-bifurcation} to show that in the limit \(s\to \infty\) we obtain a weak stationary periodic solution to the thermocapillary thin-film equation \eqref{eq:thin-film-equation} that exhibits film rupture.

\subsection{Uniform bounds for the bifurcation branch}\label{sec:uniform-bounds}

In order to identify limit points of the global bifurcation branch, we begin by establishing uniform bounds both for \(M(s)\) and \(v(s)\). In the following lemma, we first provide estimates for \(M(s)\) and \(K\).

\begin{lemma}\label{lem:uniform-bound-M}
    Let \(\{(v(s),M(s)) : s \in \R\} \subset \Ucal \times \R\) be the global bifurcation branch obtained in Proposition \ref{prop:global-bifurcation} and
    \begin{equation*}
          K(s) = K(v(s)) = \fint_{-\frac{\pi}{k_0}}^{\frac{\pi}{k_0}} \frac{1}{2+v(s)} + \log\Bigl(\frac{1+v(s)}{2+v(s)}\Bigr) \de x.
    \end{equation*}
    Then there are constants \(0<M_l < M_u < \infty\) and \(-\infty < K_l < K_u < K(0) = \frac{1}{2} + \log\bigl(\tfrac{1}{2}\bigr)< 0\) such that
    \begin{equation*}
        M_l \leq M(s) \leq M_u\quad \text{and}\quad K_l \leq K(s) \leq K(0)
    \end{equation*}
    holds for all \(s\in \R\) and $K(s) < K_u < K(0)$ for $s > 1$.
\end{lemma}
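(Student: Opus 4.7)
The five bounds will be established in the order (i) $K(s) \le K(0)$, (ii) $M(s) \le M_u$, (iii) $M(s) \ge M_l$, (iv) $K(s) \ge K_l$, and (v) the strict bound $K(s) \le K_u < K(0)$ for $s > 1$. Each step uses the Hamiltonian orbit structure of Section 2.

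\textbf{Easy bounds.} Step (i) follows from Jensen's inequality applied to the strictly concave function $\omega(v) = (v+2)^{-1} + \log((v+1)/(v+2))$ using $\fint v(s)\, \de x = 0$. Step (ii) follows verbatim from Step 2 of the proof of Proposition \ref{prop:film-rupture}: the rescaling $\tilde{\Hcal} = \Hcal/M$ together with the identity $T_\Hcal(ME) = T_{\tilde{\Hcal}}(E)/\sqrt{M}$ force every periodic orbit to have vanishing period as $M \to \infty$, contradicting $T = 2\pi/k_0$. This argument uses only the two-sided control on $K$ from (i) and (iv).

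\textbf{Lower bounds on $M$ and $K$.} For (iii), I argue by contradiction: if $M(s_n) \to 0$ then the fixed-point equation $(g/M)v = \omega(v) - K$ together with the bound $K \le K(0)$ forces $v_l(s_n) \to -1$ and $v_u(s_n) \to 0$. Since $\Gcal(-1) = -g/2 + MK < \Gcal(v_u)$, Lemma \ref{lem:orbits-Hamiltonian-system} says that the neighborhood of periodic orbits is bounded by the level set $\{\Gcal = \Gcal(-1)\}$, and solving $\Gcal(v_1) = \Gcal(-1)$ asymptotically yields $v_1 + 1 = \exp(K - g/M) \to 0$. Hence the right turning point $v_{\max}(s_n) \le v_1(s_n) \to -1$ of any orbit in this basin satisfies $v(s_n) \le v_{\max}(s_n) \to -1$ uniformly, incompatible with $\fint v(s_n) = 0$. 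For (iv), I use the Hamiltonian integral representation
\[
K(s) = \frac{2}{T}\int_{v_{\min}}^{v_{\max}} \frac{\omega(v)}{\sqrt{2(E - \Gcal(v))}}\, \de v.
\]
The only delicate region is near $v = v_{\min}$, where $\omega(v) \sim \log(v+1) \to -\infty$ as $\delta := 1 + v_{\min} \to 0$. The Taylor expansion $E - \Gcal(v) \approx |\Gcal'(v_{\min})|(v - v_{\min})$ exhibits the compensating blow-up $|\Gcal'(v_{\min})| \sim M |\log \delta|$; the substitutions $u = v - v_{\min}$ and $t = u/\delta$ then reduce the near-$v_{\min}$ contribution to a prefactor of order $(M |\log \delta|)^{-1/2}$ times $\int_0^{\eta/\delta} \log(1+t)/\sqrt{t}\, \de t$, which is uniformly bounded in $\delta$. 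The contribution away from $v_{\min}$ is bounded by continuity of $\omega$ on compact subsets of $(-1,\infty)$ together with the lower bound on $M$ from (iii).

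\textbf{Strict bound for $s > 1$ and main obstacle.} For (v), continuity of $s \mapsto v(s)$ in $\Xcal$ together with $v(s) \not\equiv 0$ for $s \neq 0$ (no closed loop, by Proposition \ref{prop:no-closed-loop}) and the strict form of Jensen's inequality yield $K(s) \le K_u^R < K(0)$ on any compact subinterval $[1, R]$; the film-rupture conclusion of Proposition \ref{prop:film-rupture} forbids $v(s) \to 0$ as $s \to \infty$, so a compactness argument (using the $\Xcal$-bound obtained from the $L^\infty$-control provided by (i)-(iv) and elliptic regularity) gives the uniform strict bound. The hardest step is (iv): the logarithmic blow-up of $\omega$ near $v_{\min}$ must be traded against the simultaneous blow-up of $|\Gcal'(v_{\min})|$, a cancellation that is transparent in the $v$-variable Hamiltonian integral but hidden in any direct $x$-variable $H^2$-Sobolev estimate, which would only give a $\delta$-dependent lower bound that collapses in the film-rupture limit.
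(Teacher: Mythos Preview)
Your overall plan—Jensen for (i), the period-shrinking argument for (ii), a $v_{\max}<0$ contradiction for (iii), and the strict Jensen/no-loop argument for (v)—matches the paper's logic in spirit, though the order of dependencies differs. The genuine gap is in step (iv), which you correctly identify as the hardest.

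\textbf{The claimed cancellation in (iv) fails.} First, the integral $\int_0^{\eta/\delta}\tfrac{\log(1+t)}{\sqrt{t}}\,\de t$ is \emph{not} uniformly bounded in $\delta$: for $N=\eta/\delta$ large one has $\int_0^N t^{-1/2}\log(1+t)\,\de t\sim 2\sqrt{N}\log N$, which diverges. Second, when you expand $\log(\delta+u)=\log\delta+\log(1+t)$ you have silently dropped the $\log\delta$ piece; that piece contributes a term of order $\sqrt{|\log\delta|}$ after the prefactor, which also diverges. Third, and more structurally, the linear Taylor approximation $E-\Gcal(v)\approx|\Gcal'(v_{\min})|(v-v_{\min})$ goes the wrong way: since $\Gcal''(v)\sim M/(1+v)\to+\infty$ near $v=-1$, the quantity $|\Gcal'|$ is maximal at $v_{\min}$ and decreases, so $E-\Gcal(v)=\int_{v_{\min}}^v|\Gcal'|\,\de s\le|\Gcal'(v_{\min})|(v-v_{\min})$. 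Hence your linearisation gives only a \emph{lower} bound on $\int|\omega|/\sqrt{E-\Gcal}$, which is useless for bounding $K$ from below. The approximation is in any case only valid on the scale $u\ll\delta$, not up to a fixed $\eta$.

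The paper avoids the integral representation entirely. It rescales the potential by $1/|K|$, writing $\hat\Gcal(v)=\Gcal(v)/(-K)$, and observes that as $K\to-\infty$ (with $M\in[M_l,M_u]$ already known) one has $\hat\Gcal(v)\to Mv$ uniformly on compact $v$-intervals. A linear potential forces $v_{\max}<0$ for the basin of periodic orbits, contradicting zero mean. This is a three-line argument once (ii) and (iii) are in hand. Correspondingly the paper's order is (i), then (ii) \emph{without} assuming (iv) (it shows directly that $M\to\infty$ forces $K$ bounded below as part of the contradiction, using that $|\Gcal(v)/M-\Gcal_\infty(v)|=\tfrac{g}{2M}v^2$ is independent of $K$), then (iii), then (iv). Your appeal to Step~2 of Proposition~\ref{prop:film-rupture} ``verbatim'' for (ii) is therefore also not quite right: that step assumed $\min v(s)\ge c>-1$, which gave the lower bound on $K$ there; here no such a~priori bound is available, and the paper supplies a separate argument.
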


\begin{proof}
    We recall from Lemma \ref{lem:orbits-Hamiltonian-system} that the boundary of the set of all periodic orbits around $(v_l,0)$ is either given by the level set of $\Hcal(-1,0)$ or the level set of $\Hcal(v_u,0)$ if $\Hcal(v_u,0) \leq \Hcal(-1,0)$. In particular, this implies that any periodic orbit around $(v_l,0)$ satisfies
    \begin{equation*}
        v \leq v_\mathrm{max},
    \end{equation*}
    where $v_\mathrm{max} := \min\{v > -1 \,:\, \Hcal(v,0) = \Hcal(-1,0)\}$.
    In particular, we note that if $v_\mathrm{max} < 0$ for some $K,M$, then there are no periodic solutions with vanishing mass.

    We split the proof into three steps. First, we show that $M(s)$ is bounded from above independently of the behaviour of $K(s)$. Second, we bound $M(s)$ from below again independently of the behaviour of $K(s)$. Finally, using that $M(s) \in (M_l,M_u)$ we find that $K(s)$ is bounded from below.

    \noindent\textbf{Step 1: upper bound for $\mathbf{M(s)}$.} To obtain an upper bound on $M(s)$ we recall from \hyperlink{film-rupture-step2}{step 2} of the proof of Proposition \ref{prop:film-rupture} that if $K \geq K_l > -\infty$ then $M \leq M_u < \infty$. Now, we show that $M(s) \rightarrow +\infty$ implies that $K(s) \geq K_l > -\infty$, which leads to a contradiction and thus $M(s) \leq M_u < \infty$.

    We follow the proof of Proposition \ref{prop:film-rupture} and recall that
    \begin{equation*}
        \Gcal_\infty(v) = (1+v) \log\Bigl(\frac{1+v}{2+v}\Bigr) - Kv.
    \end{equation*}
    Next, we note that
    \begin{equation*}
        \Bigl\vert\frac{\Gcal(v)}{M} - \Gcal_\infty(v)\Bigr\vert = \frac{g}{2M}v^2,
    \end{equation*}
    which is independent of $K$. Hence, if $M(s) \rightarrow \infty$ then $\tfrac{\Gcal}{M}$ converges to $\Gcal_\infty$ uniformly in $K$ and on compact intervals of $v$. In particular, if $|K(s)|$ is sufficiently large, then $v_{\mathrm{max},\infty}$ is negative (recall that \(v_{\mathrm{max},\infty}\) is the unique solution \(v > -1\) to \(\Gcal_{\infty}(v) = \Gcal_{\infty}(-1)\)). Thus, $v_\mathrm{max}$ is also negative for $M$ sufficiently large, which is a contradiction to the fact that the periodic solutions have vanishing mass. We conclude that \(M(s) \to +\infty\) implies that \(K(s)\) is bounded from below. This rules out, by contradiction, that \(M(s) \to +\infty\).

    \noindent\textbf{Step 2: lower bound for $\mathbf{M(s)}$.} Again, we argue by contradiction and assume that $M(s) \rightarrow 0$. We estimate
    \begin{equation*}
        \Gcal(v) - \Gcal(-1) = \frac{g}{2}(1-v^2) + M(1+v)\log\Bigl(\frac{1+v}{2+v}\Bigr) - MKv + MK \geq \frac{g}{2}(1-v^2) + M(1+v)\log\Bigl(\frac{1+v}{2+v}\Bigr),
    \end{equation*}
    using that $M \geq 0$ and $K < 0$. Notice that $(1+v) \log\bigl(\tfrac{1+v}{2+v}\bigr) \in (-1,0)$ and thus the lower bound converges to $\tfrac{g}{2}(1-v^2)$ uniformly in $v$. In particular, $\tfrac{g}{2}(1-v^2) > 0$ for $v \in (-1,0)$. This then implies that $v_\mathrm{max} < 0$ for $M$ sufficiently small and therefore, again by contradiction, we find that $M(s) \geq M_l > 0$.

    \noindent\textbf{Step 3: lower bound for $\mathbf{K(s)}$.} We use that $M \in [M_l,M_u]$. Additionally, using that $K < 0$, we consider
    \begin{equation*}
        \hat{\Gcal}(v) := \frac{\Gcal(v)}{-K} = \frac{g}{2K} v^2 - \frac{M}{K}(1+v)\log\Bigl(\frac{1+v}{2+v}\Bigr) + Mv.
    \end{equation*}
    Assuming that $K(s) \rightarrow -\infty$, the first two terms vanish uniformly on compact intervals in $v$, in particular for $v \in (-1,1)$ since $M \in [M_l,M_u]$. Since the last remaining term is increasing in $v$, we find that $v_\mathrm{max}$ is negative for $|K(s)|$ sufficiently large. Again, this contradicts the existence of periodic solutions with vanishing mass. Hence, there exists a lower bound \(-\infty <K_l \leq K(s)\). Finally, \(K(s) = K(0)\) implies that \(v(s) \equiv 0\) is a constant solution since $K$ is defined as an integral over a strictly concave function. By Proposition \eqref{prop:no-closed-loop} this only occurs for \(s=0\) and we conclude that \(K(s) \leq K_u < K(0)\) for \(s \geq 1\). This concludes the proof.
\end{proof}

Using the bounds for \(M(s)\) and \(K(s)\) obtained in Lemma \ref{lem:uniform-bound-M}, we obtain uniform bounds for the set of bifurcating solutions \(\{v(s)\,:\, s\in \R\}\).

\begin{proposition}\label{prop:uniform-bound-H2}
    Let \(\{(v(s),M(s)) : s \in \R\} \subset \Ucal \times \R\) be the global bifurcation branch obtained in Proposition \ref{prop:global-bifurcation}. Then \(\{v(s) \,:\, s\in \R\}\) is uniformly bounded in \(\Xcal\). In particular, there exists a constant \(C>0\) such that
    \begin{equation*}\label{eq:uniform-bound-H2}
        \|v(s)\|_{H^2_{\mathrm{per}}} \leq C.
    \end{equation*}
    Moreover, \(\{v(s) \,:\, s\in \R\}\) is uniformly bounded in \(W^{2,p}_\mathrm{per}\) for every \(p\in [1,\infty)\), but blows up in \(W^{2,\infty}_{\mathrm{per}}\).
\end{proposition}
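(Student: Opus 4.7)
The plan is to combine the Hamiltonian conservation law for \eqref{eq:first-order-system} with the uniform bounds on $M(s)$ and $K(s)$ from Lemma \ref{lem:uniform-bound-M} to derive pointwise bounds on $v(s)$ and $\partial_x v(s)$, and then to estimate $\partial_x^2 v(s) = -f_{M(s)}(v(s))$ in $L^p$ by changing variables to $v$.

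First, each $v(s)$ is a periodic orbit of the Hamiltonian system with Hamiltonian $\Hcal$ from \eqref{eq:Hamiltonian}, so $v(s) \leq v_u(s)$, the upper fixed point. The estimate \eqref{eq:uniform-bound-L-infty}, together with $M(s) \leq M_u$ and $K(s) \geq K_l$ from Lemma \ref{lem:uniform-bound-M}, yields $v_u(s) \leq -K_l M_u/g$ uniformly; combined with $v(s) > -1$, this gives $\|v(s)\|_{L^\infty} \leq C$. Hamiltonian conservation then implies
\[
\tfrac{1}{2}(\partial_x v(s))^2 = \Gcal(v_{\min}(s)) - \Gcal(v(s))
\]
with $\Gcal$ as in the proof of Lemma \ref{lem:orbits-Hamiltonian-system}. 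Since $(1+v)\log\bigl(\tfrac{1+v}{2+v}\bigr) \to 0$ as $v \to -1^+$, $\Gcal$ extends continuously to the compact interval $[-1, v_u(s)]$, and the previous uniform bounds give $\|\partial_x v(s)\|_{L^\infty} \leq C$ uniformly.

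For the $W^{2,p}$ bound the analysis is more delicate. Writing $u = 1+v$ and $u_m = 1+v_{\min}(s)$, one has $f_{M(s)}(v) = M(s)\log u + \Ocal(1)$ near $v = -1$, so any $L^p$-blow-up must come from the region where $u$ is small. Using that $v(s)$ is strictly monotone on each half period by Remark \ref{rem:nodal-property}, I change variables from $x$ to $v$ and obtain
\[
\int_{-\pi/k_0}^{\pi/k_0} |f_{M(s)}(v(s))|^p \de x = 2\int_{v_{\min}(s)}^{v_{\max}(s)} \frac{|f_{M(s)}(v)|^p}{\sqrt{2\bigl(\Gcal(v_{\min}(s)) - \Gcal(v)\bigr)}} \de v.
\]
A Taylor expansion around $v = v_{\min}(s)$ gives $\Gcal(v_{\min}(s)) - \Gcal(v) \sim M(s)|\log u_m|(u - u_m)$ for $u \in [u_m, 2u_m]$, while for $u \in [2u_m, 1/e]$ the dominant contribution comes from $\Gcal(v) - \Gcal(-1) \sim M(s) u \log u$, giving $\Gcal(v_{\min}(s)) - \Gcal(v) \sim M(s) u|\log u|$. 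Splitting the integral accordingly, the inner region contributes at most $C M(s)^{p-1/2}|\log u_m|^{p-1/2}\sqrt{u_m} \to 0$ as $u_m \to 0$; the middle region contributes at most $C M(s)^{p-1/2}\int_0^{1/e} u^{-1/2}|\log u|^{p-1/2}\de u$, which is finite for every $p < \infty$ (via $u = e^{-t}$); and the outer region $v \in [1/e - 1, v_{\max}(s)]$ is harmless since $f_{M(s)}$ is bounded there and $1/\sqrt{\Gcal(v_{\min}) - \Gcal(v)}$ has only an integrable square-root singularity at $v_{\max}(s)$. Using $M(s) \leq M_u$, this yields $\|\partial_x^2 v(s)\|_{L^p} \leq C_p$ uniformly in $s$.

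The main obstacle is precisely this $L^p$ estimate: the naive pointwise bound $|f_{M(s)}(v)|^p \leq C(1 + |\log u_m|)^p$ is not uniform in $s$, and one must exploit the Hamiltonian weight to obtain the integrable singularity $u^{-1/2}|\log u|^{p-1/2}$, whose integrability breaks down precisely at $p = \infty$. This also explains the blow-up in $W^{2,\infty}$: by Proposition \ref{prop:film-rupture} there exists a sequence $s_n \to \infty$ with $u_m(s_n) \to 0$, and then
\[
\|\partial_x^2 v(s_n)\|_{L^\infty} \geq |f_{M(s_n)}(v_{\min}(s_n))| \geq M_l\bigl|\log u_m(s_n)\bigr| - C \longrightarrow +\infty.
\]
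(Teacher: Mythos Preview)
Your Step 1 (the $W^{1,\infty}$ bound via the Hamiltonian and the $W^{2,\infty}$ blow-up) matches the paper exactly. For the $W^{2,p}$ bound, however, your route is genuinely different. The paper argues in the $x$-variable: along a convergent subsequence $v(s_n)\to v_\infty$, it observes from \eqref{eq:shifted-steady-state-equation} that $\partial_x^2 v(s_n)\geq 2$ on a fixed ball $B_r(x_{\min})$ (since $|\log(1+v(s_n))|$ is large there), so by Taylor's formula $1+v(s_n,x)\geq |x-x_{\min}|^2$ on $B_r(x_{\min})$, and then simply integrates $|\log(|x-x_{\min}|^2)|^p$. Outside $B_r(x_{\min})$ the function $1+v(s_n)$ is bounded below and nothing happens. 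This quadratic touch-down estimate is short and transparent, and it also feeds directly into the limit analysis of Section~\ref{sec:film-rupture}.

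Your argument instead passes to the $v$-variable using the Hamiltonian weight. This is a legitimate alternative, and the inner and middle contributions are handled correctly: the mean-value form $\Gcal(v_{\min})-\Gcal(v)=-\Gcal'(\xi)(v-v_{\min})$ with $-\Gcal'(\xi)\sim M|\log u_m|$ covers $u\in[u_m,2u_m]$, and the $u|\log u|$ lower bound on $[2u_m,1/e]$ follows from $\Gcal(v_{\min})-\Gcal(v)=\int_{u_m}^{u} (-f_M)\,d\sigma$ and the monotonicity of $t\mapsto t|\log t|$ on $(0,1/e)$ (one should also note that $v_l$ stays uniformly away from $-1$ by Lemma~\ref{lem:uniform-bound-M} and the fixed-point equation, so the middle region does not cross $v_l$). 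One point that does need a word, though, is the \emph{uniformity} of your outer-region bound: saying the weight has ``an integrable square-root singularity at $v_{\max}(s)$'' is true for each fixed $s$, but a uniform estimate requires $v_{\max}(s)$ to stay away from $v_u(s)$, which you have not shown. The clean fix is to observe that
\[
\int_{v_{\min}(s)}^{v_{\max}(s)}\frac{dv}{\sqrt{2(\Gcal(v_{\min}(s))-\Gcal(v))}}=\frac{\pi}{k_0}
\]
is the fixed half-period, so on the outer region (where $|f_M|^p\leq C$) the contribution is at most $C\pi/k_0$ regardless of how close $v_{\max}(s)$ comes to $v_u(s)$.
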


\begin{remark}
    \begin{enumerate}
        \item Proposition \ref{prop:uniform-bound-H2} and Lemma \ref{lem:uniform-bound-M} imply in particular that condition \ref{it:global-condition-1} in Proposition \ref{prop:global-bifurcation} does not occur. Hence, only condition \ref{it:global-condition-2} occurs.
        \item Note that the $C^2$-norm of $v(s)$ is not uniformly bounded since the right-hand side of \eqref{eq:shifted-steady-state-equation} becomes unbounded as $v$ approaches $-1$. In particular, we cannot obtain uniform bounds in $H^3_\mathrm{per}$.
    \end{enumerate}
\end{remark}

\begin{proof}
    We first prove a uniform bound in \(W^{1,\infty}(\R)\) using the Hamiltonian system \eqref{eq:Hamiltonian}. Then, we use the differential equation \eqref{eq:shifted-steady-state-equation} to deduce a uniform bound in \(H^2_\mathrm{per}\).

    \noindent \textbf{Step 1: uniform bound in \(\mathbf{W^{1,\infty}(\R)}\).}\linkdest{uniform-bound-step1}{} As in the proof of Proposition \ref{prop:film-rupture} we obtain an upper bound for $v$ by using that
    \begin{equation*}
         v_u \leq -K_l \frac{M_u}{g} < \infty,
    \end{equation*}
    see \eqref{eq:uniform-bound-L-infty}. Here $K_l$ and $M_u$ are the lower bound on $K$ and upper bound on $M$ provided in the previous Lemma \ref{lem:uniform-bound-M}. This implies that $v(s)$ is uniformly bounded in $L^\infty(\R)$.

    To bound $w^2 = (\partial_x v)^2$, we use that the periodic orbits lie on the level sets of the Hamiltonian $\Hcal$. Hence, $w^2$ satisfies
    \begin{equation*}
        \frac{1}{2}w^2 = \frac{g}{2} v^2 - M(1+v)\log\Bigl(\frac{1+v}{2+v}\Bigr) + MKv + \Hcal(v,w).
    \end{equation*}
    Since $v$ is bounded in $L^\infty(\R)$, $M$ and $K$ are bounded from above and below by Lemma \ref{lem:uniform-bound-M}, and the fact that $(1+v)\log\bigl(\tfrac{1+v}{2+v}\bigr) \in (-1,0)$, we obtain  that $w^2$ is uniformly bounded in \(L^{\infty}(\R)\) if $\Hcal(v,w)$ is bounded. In view of \(v(s)\) being a family of periodic solutions, it is sufficient to show that $\Hcal(v,0)$ is bounded on $\bigl(-1,-K_l\tfrac{M_u}{g}\bigr)$. Since $v \mapsto \Hcal(v,0)$ can be continuously extended in $v = -1$, the uniform bound follows from continuity. This implies that \(w=\partial_x v\) is uniformly bounded in \(L^{\infty}(\R)\).

    \noindent \textbf{Step 2: uniform bound in \(\mathbf{H^2_{\mathrm{per}}}\).}\linkdest{uniform-bound-step2}{} To obtain a uniform bound for the \(L^2_\mathrm{per}\)-norm, we use that \(v(s)\) is a solution to the differential equation
    \begin{equation*}
        \partial_x^2 v(s) = gv(s) - M\Bigl(\frac{1}{2+v(s)} + \log\Bigl(\frac{1+v(s)}{2+v(s)}\Bigr)\Bigr) + M(s)K(s).
    \end{equation*}
    Hence, it is sufficient to bound the \(L^2_{\mathrm{per}}\)-norm of the right-hand side. Since \(v(s)\) is uniformly bounded in \(L^{\infty}(\R)\) by \hyperlink{uniform-bound-step1}{step 1} with \(v(s) > -1\) for all \(s\in \R\) and furthermore, by Lemma \ref{lem:uniform-bound-M} \(|M(s)|\) and \(|K(s)|\) are uniformly bounded, it suffices to obtain a uniform bound for
    \begin{equation*}
        \int_{-\frac{\pi}{k_0}}^{\frac{\pi}{k_0}} \Bigl|\log\Bigl(\frac{1+v(s)}{2+v(s)}\Bigr)\Bigr|^2 \de x \leq 2\int_{-\frac{\pi}{k_0}}^{\frac{\pi}{k_0}} |\log(1+v(s))|^2 \de x + C.
    \end{equation*}
    By Proposition \ref{prop:no-closed-loop} and Remark \ref{rem:nodal-property} the minima of \(v(s)\) on the interval \(\bigl[-\tfrac{\pi}{k_0},\tfrac{\pi}{k_0}\bigr]\) lie exactly at the boundary points \(\pm\tfrac{\pi}{k_0}\). Without loss of generality it is enough to study the integral close to \(x_{\mathrm{min}}=\tfrac{\pi}{k_0}\), since
    \begin{equation*}
        \int_{-\frac{\pi}{k_0}}^{\frac{\pi}{k_0}} |\log(1+v(s))|^2 \de x = \int_{0}^{\frac{2\pi}{k_0}} |\log(1+v(s))|^2 \de x
    \end{equation*}
    by periodicity of \(v(s)\). We will show that there exists \(0 < \eps < \tfrac{1}{2}\), a radius \(r>0\), and \(N_0>0\) such that
    \begin{equation*}
        v(s,x) + 1 \geq v(s,x)-v(s,x_{\mathrm{min}}) \geq |x-x_\mathrm{min}|^2 \quad \text{and} \quad v(s,x) + 1 \leq \frac{3}{2}\eps
    \end{equation*}
    for every \(|x-x_\mathrm{min}|<r\) and \(s \geq N_0\) and furthermore
    \begin{equation*}
        v(s,x) + 1 \geq \frac{\eps}{2} 
    \end{equation*}
    for every \(|x-x_\mathrm{min}|\geq r\) and \(s \geq N_0\). Indeed, this suffices for the uniform bound since
    \begin{equation*}
    \begin{split}
        \int_{0}^{\frac{2\pi}{k_0}} |\log(1+v(s))|^2 \de x & = \int_{B_r(x_\mathrm{min})} |\log(1+v(s))|^2 \de x + \int_{\bigl[0,\tfrac{2\pi}{k_0}\bigr]\setminus B_r(x_\mathrm{min})} |\log(1+v(s))|^2 \de x \\
        & \leq \int_{B_r(x_\mathrm{min})} |\log(|x-x_\mathrm{\min}|^2)|^2 \de x + C \\
        & \leq C,
    \end{split}
    \end{equation*}
    where $B_r(x_\mathrm{min})$ denotes the open ball with radius $r$ around $x_\mathrm{min}$.
    To obtain the pointwise bounds for \(v(s)\), notice that by \hyperlink{uniform-bound-step1}{step 1} there exists a periodic function \(v_{\infty}\in W^{1,\infty}(\R)\) and a sequence \(s_n \to \infty\) such that \(v(s_n) \overset{\ast}{\rightharpoonup} v_{\infty}\) as \(n\to \infty\) converges in the weak-star sense. In particular, \(v_{\infty}\) is continuous and \(v(s_n)\) converges to \(v_{\infty}\) uniformly. Furthermore, using Proposition \ref{prop:film-rupture} we know that \(v_{\infty}(x_{\min}) = -1\). Since \(|K(s)|\) is uniformly bounded and $v(s_n)$ converges uniformly to $v_{\infty}$, we obtain that \(v_{\infty}(x) > -1\) for all \(x\) in a neighborhood of \(x_{\mathrm{min}}\). Since \(v_{\infty}\in \Kcal\), c.f.~\eqref{eq:cone} for the definition of \(\Kcal\), we conclude that \(x_{\mathrm{min}}\) is the unique minimum of \(v_{\infty}\) in \(\bigl[0,\tfrac{2\pi}{k_0}\bigr]\).

    Now, choose \(0 < \eps <\frac{1}{2}\) and \(r = r(\eps) >0\) maximal so that
    \begin{equation}\label{eq:eps-bound-1}
        v_{\infty}(x) + 1 < \eps \quad \text{for all }x\in B_r(x_\mathrm{min}).
    \end{equation}
    Choose \(N_0 = N_0(\eps)\) large enough so that
    \begin{equation}\label{eq:eps-bound-2}
        \|v(s_n) - v_{\infty}\|_{L^{\infty}} < \frac{\eps}{2} \quad \text{for all } n \geq N_0.
    \end{equation}
    Using equation \eqref{eq:shifted-steady-state-equation}, we obtain the lower bound for the second derivative given by
    \begin{equation*}
    \begin{split}
        \partial_x^2 v(s_n) & = gv(s_n) - M \log(1+v(s_n)) + M\log(2+v(s_n)) - M \frac{1}{2+v(s_n)} + M(s_n)K(s_n)\\
        & \geq M |\log(1+v(s_n))| - M_u(1-K_l) - g.
    \end{split}
    \end{equation*}
    Since \eqref{eq:eps-bound-1} and \eqref{eq:eps-bound-2} imply that 
    \begin{equation*}
        v(s_n,x)+1 < \frac{3}{2} \eps \quad  \text{for all }x\in B_r(x_\mathrm{min}) \text{ and all } n \geq N_0,
    \end{equation*}
    we can choose \(\eps >0\) small enough such that 
    \begin{equation*}
        M |\log(1+v(s_n))| - M_u(1-K_l) - g > 2\quad  \text{for all }x\in B_r(x_\mathrm{min}) \text{ and all } n\geq  N_0.
    \end{equation*}
    By Taylor's formula with remainder, we obtain
    \begin{equation*}
        v(s_n,x) = v(s_n,x_\mathrm{min}) + \frac{1}{2}\partial_x^2 v(\xi)|x-x_\mathrm{min}|^2 \quad \text{for all }x\in B_r(x_\mathrm{min})
    \end{equation*}
    for some \(\xi=\xi(x) \in B_r(x_\mathrm{min})\). In particular, we obtain the lower bound
    \begin{equation*}
        v(s_n,x) - v(s_n,x_\mathrm{min}) \geq |x-x_\mathrm{min}|^2\quad  \text{for all }x\in B_r(x_\mathrm{min}) \text{ and all } n \geq N_0.
    \end{equation*}
    Furthermore, since \(v_{\infty} \in \Kcal\) it is monotonically increasing for \(\tfrac{\pi}{k_0}<x<\tfrac{2\pi}{k_0}\) and even about \(\tfrac{\pi}{k_0}\) by periodicity, we have \(v_{\infty}(x)+1 \geq \eps\) for all \(x \in \bigl[0,\tfrac{2\pi}{k_0}\bigr]\setminus B_r(x_\mathrm{min})\) by the maximality of \(r\). Hence, by \eqref{eq:eps-bound-2}, we obtain
    \begin{equation*}
        v(s_n,x) + 1 \geq v_{\infty}(x) +1 - |v(s_n,x) - v_{\infty}(x)|  \geq \frac{\eps}{2}
    \end{equation*}
    for all \(x \in \bigl[0,\tfrac{2\pi}{k_0}\bigr]\setminus B_r(x_\mathrm{min})\) and \(n\geq N_0\). This concludes the proof of step 2.

    \noindent \textbf{Step 3: uniform bound in \(\mathbf{W^{2,p}_{\mathrm{per}}}\).}\linkdest{uniform-bound-step3}{} The uniform bounds in \(W^{2,p}_\mathrm{per}\) follow by the same argument using that there exists a constant \(C_p>0\) such that
    \begin{equation*}
        \int_{B_r(x_{\mathrm{min}})}  |\log(|x-x_\mathrm{min}|^2)|^p \de x \leq C_p
    \end{equation*}
    for all \(1\leq p < \infty\). For the blow up of the \(W^{2,\infty}_{\mathrm{per}}\)-norm, observe that \(\log(1+v(s))\) becomes unbounded as \(\inf_{x\in \bigl(-\tfrac{\pi}{k_0},\tfrac{\pi}{k_0}\bigr)} v(s)\) approaches \(-1\).
\end{proof}

\subsection{Limit points are weak stationary solutions exhibiting film rupture}
With the uniform bounds obtained in the previous part of this section, we are now in the position to identify and study the limit points of the global bifurcation branch. In particular, we conclude from Proposition \ref{prop:uniform-bound-H2} that there is \(v_{\infty}\in \Xcal\) such that
\begin{equation*}
    v(s_n) \rightharpoonup v_{\infty} \quad \text{weakly in } H^2_{\mathrm{per}}
\end{equation*}
along a sequence \(s_n\to \infty\). We want to prove that \(h_{\infty}=1+v_{\infty}\) is a weak stationary solution to the thin-film equation \eqref{eq:thin-film-equation}. In order to pass to  the limit in the nonlinear terms of the weak formulation for \(h(s)=1+v(s)\), c.f.~Definition \ref{def:weak-solution}, we first need to provide uniform bounds for the third derivative locally on the positivity set
\begin{equation*}
    \{h_{\infty} > 0\} := \{x\in \R\, :\, h_{\infty}(x) >0\}.
\end{equation*}

\begin{lemma}\label{lem:uniform-bound-third-derivative}
    Let \(Z\subset\{h_{\infty}>0\}\) be a compact set. Then there exists a constant \(C=C(Z) >0\) such that
    \begin{equation*}
        \|\partial_x^3 v(s_n)\|_{L^2(Z)} = \|\partial_x^3 h(s_n)\|_{L^2(Z)} \leq C
    \end{equation*}
    uniformly for all \(n\in \N\).
\end{lemma}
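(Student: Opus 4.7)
My plan is to exploit the once-integrated stationary equation together with a uniform positive lower bound on $h(s_n)$ over $Z$. After one integration with the constant set to zero as in Section \ref{sec:Hamiltonian}, every solution on the bifurcation branch satisfies
\begin{equation*}
  h(s_n)^3\bigl(\partial_x^3 h(s_n) - g\,\partial_x h(s_n)\bigr) + M(s_n)\frac{h(s_n)^2}{(1+h(s_n))^2}\partial_x h(s_n) = 0.
\end{equation*}
Wherever $h(s_n) > 0$, I can divide by $h(s_n)^3$ and obtain the pointwise identity
\begin{equation*}
  \partial_x^3 h(s_n) = g\,\partial_x h(s_n) - \frac{M(s_n)}{h(s_n)(1+h(s_n))^2}\,\partial_x h(s_n).
\end{equation*}
This reduces the claimed $L^2$-bound to two ingredients: (i) a pointwise lower bound $h(s_n) \geq \kappa(Z) > 0$ on $Z$ that is uniform in $n$, and (ii) the uniform $H^1_\mathrm{per}$-bound on $v(s_n) = h(s_n)-1$ that is already supplied by Proposition \ref{prop:uniform-bound-H2}.

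For (i) I would argue as follows. Since $Z \subset \{h_\infty > 0\}$ is compact and $h_\infty \in W^{2,p}_\mathrm{per}$ is in particular continuous, there exists $\kappa_0 > 0$ with $h_\infty \geq 2\kappa_0$ on $Z$. The weak convergence $v(s_n) \rightharpoonup v_\infty$ in $H^2_\mathrm{per}$ together with the compact embedding $H^2_\mathrm{per} \embed C^0_\mathrm{per}$ yields uniform convergence $h(s_n) \to h_\infty$, so there exists $N_0$ with $h(s_n) \geq \kappa_0$ on $Z$ for every $n \geq N_0$. For the finitely many remaining indices $n < N_0$, each $h(s_n)$ is smooth and strictly positive on $\R$ (since $v(s_n) \in \Ucal$), and hence admits a strictly positive minimum on the compact set $Z$. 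Letting $\kappa(Z)$ be the minimum of $\kappa_0$ and these finitely many positive numbers produces the required uniform lower bound.

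Combining (i) with the upper bound $M(s_n) \leq M_u$ from Lemma \ref{lem:uniform-bound-M}, the pointwise identity immediately yields
\begin{equation*}
  \absb{\partial_x^3 h(s_n)} \leq \Bigl(g + \frac{M_u}{\kappa(1+\kappa)^2}\Bigr)\absb{\partial_x h(s_n)} \quad \text{on } Z,
\end{equation*}
and squaring, integrating over $Z$, and invoking (ii) closes the estimate with a constant $C(Z)$ independent of $n$.

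The only delicate point in this plan is the passage from pointwise positivity of $h_\infty$ on $Z$ to an $n$-uniform lower bound on $h(s_n)$ on $Z$; this crucially rests on the uniform convergence granted by the compact embedding $H^2_\mathrm{per} \embed C^0_\mathrm{per}$ applied to the $H^2$-bound of Proposition \ref{prop:uniform-bound-H2}. Everything else is an elementary consequence of the once-integrated equation and Lemma \ref{lem:uniform-bound-M}.
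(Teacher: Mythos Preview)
Your proposal is correct and follows essentially the same approach as the paper: both derive the pointwise identity $\partial_x^3 h = g\,\partial_x h - M\,h^{-1}(1+h)^{-2}\partial_x h$ (the paper obtains it by differentiating \eqref{eq:shifted-steady-state-equation}, you by integrating once and dividing by $h^3$, which is the same thing), then combine a uniform lower bound on $h(s_n)$ over $Z$ from the $C^0$-convergence with the uniform bounds on $M(s_n)$ and $\partial_x h(s_n)$. Your treatment of the finitely many indices $n<N_0$ is in fact slightly more explicit than the paper's, which tacitly restricts to $n$ large enough.
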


\begin{proof}
    By Proposition \ref{prop:global-bifurcation}, \(v(s)\) is smooth. Differentiating \eqref{eq:shifted-steady-state-equation} we obtain that $\partial_x^3 v$ satisfies
    \begin{equation*}
        \partial_x^3 v(s) = g \partial_x v(s) - M \frac{\partial_x v(s)}{(1+v(s))(2+v(s))^2}.
    \end{equation*}
    Since \(Z\subset\{h_{\infty}>0\}\) is compact and \(h_{\infty}\) is continuous, there exists \(N>0\) such that \(Z\subset \{h_{\infty}> \tfrac{1}{N}\}\). Furthermore, since \(v(s_n)\) converges to \(v_{\infty}\) uniformly, we conclude that
    \begin{equation*}
        1 + v(s_n) \geq \frac{1}{2N} \quad \text{for all }x\in\{h_{\infty}> \tfrac{1}{N}\} \text{ and } n \text{ large enough}. 
    \end{equation*}
    Thus,
    \begin{equation*}
        |\partial_x^3 v(s_n)| \leq |\partial_x v(s_n)| (g + 2N M) \quad \text{for all } x\in Z.
    \end{equation*}
    Using that \(v(s_n)\) is uniformly bounded in \(W^{1,\infty}(\R)\) by Proposition \ref{prop:uniform-bound-H2}, this implies the desired uniform \(L^2\)-bound for \(\partial_x^3h(s_n)\) on \(Z\).
\end{proof}

Now, we prove the main theorem of this section on the existence of weak stationary periodic film-rupture solutions that are the limit points of the global bifurcation branches.

\begin{theorem}[Film rupture]\label{thm:weak-solutions}
    For all $k_0 > 0$ exist \(M_{\infty} > 0\) and a function \(v_{\infty} \in W^{2,p}_{\mathrm{per}}\) for all \(p\in [1,\infty)\) with \(\partial_x^3 v_{\infty} \in L^2_{\loc}(\{h_{\infty} >0\})\), where \(h_{\infty}=1+v_{\infty}\). The function \(v_{\infty}\) belongs to \(\Xcal \cap \Kcal\). In particular, $v_\infty$ is even, periodic, and non-decreasing in $[-\tfrac{\pi}{k_0},0)$. Furthermore, \(v_{\infty}\geq -1\) with \(v_{\infty}(x) = -1\) precisely in \(x= \pm\tfrac{\pi}{k_0}+2j\tfrac{\pi}{k_0}\), \(j\in \Z\).
    Eventually, we have the following convergence results (along a subsequence):
    \begin{enumerate}
        \item \(v(s) \rightharpoonup v_{\infty}\) weakly in \(W^{2,p}_{\mathrm{per}}\), in particular \(v(s) \longrightarrow v_{\infty}\) strongly in \(C^1(\R)\);
        \item \(\partial_x^3 v(s) \rightharpoonup \partial_x^3 v_{\infty}\) weakly in \(L^2_{\loc}(\{h_{\infty} >0\})\);
        \item \(h(s)^3\partial_x^3 h(s) \longrightarrow h_{\infty}^3 \partial_x^3 h_{\infty} = g\partial_x h_{\infty} - M_{\infty}\frac{h_{\infty}^2}{(1+h_{\infty})^2}\partial_x h_{\infty}\) strongly in \(C^0(\R)\), where \(h(s)=1+v(s)\) and we extend \(h_{\infty}^3\partial_x^3h_{\infty}\) continuously by zero, where \(h_{\infty}=0\).
    \end{enumerate}
    In particular, \(h_{\infty}\) is a weak stationary solution to the equation
    \begin{equation*}
        \partial_x\Bigl(h_{\infty}^3 \bigl(\partial_x^3 h_{\infty}-g\partial_x h_{\infty}\bigr) + M_{\infty}\frac{h_{\infty}^2}{(1+h_{\infty})^2}\partial_x h_{\infty}\Bigr) = 0
    \end{equation*}
    in the sense of Definition \ref{def:weak-solution}.
\end{theorem}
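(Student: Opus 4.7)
The plan is to extract a weakly convergent subsequence from the global bifurcation curve and pass to the limit in the integrated stationary equation. First, by Lemma~\ref{lem:uniform-bound-M} we have $0 < M_l \le M(s) \le M_u$, so along a subsequence $s_n \to \infty$ we obtain $M(s_n) \to M_\infty \in [M_l, M_u]$. By Proposition~\ref{prop:uniform-bound-H2} the family $\{v(s)\}$ is uniformly bounded in $W^{2,p}_{\mathrm{per}}$ for every $p \in [1,\infty)$, so reflexivity together with a diagonal extraction yields $v(s_n) \rightharpoonup v_\infty$ weakly in $W^{2,p}_{\mathrm{per}}$ for every $p \in (1,\infty)$, and the compact embedding $W^{2,p}_{\mathrm{per}} \cembed C^1(\R)$ upgrades this to strong convergence in $C^1(\R)$. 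The qualitative properties of $v_\infty$ then pass to the limit: evenness, periodicity, the vanishing-mean condition, membership in the closed convex cone $\Kcal$, and $v_\infty \ge -1$ are all stable. Refining the subsequence so that $\min v(s_n) \to -1$ (which is possible by Proposition~\ref{prop:film-rupture}) and combining this with the nodal property of Remark~\ref{rem:nodal-property}, which locates the minima of $v(s)$ precisely at $x = \pm \pi/k_0$, yields $v_\infty(\pm\pi/k_0) = -1$. Since $v_\infty$ is non-decreasing on $[-\pi/k_0, 0)$, even, and its maximum at $x=0$ is strictly above $-1$ via the Hamiltonian upper bound used in Proposition~\ref{prop:uniform-bound-H2}, these are the only zeros of $h_\infty$ on one period.

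For the higher-regularity convergence, the essential input is Lemma~\ref{lem:uniform-bound-third-derivative}: on every compact set $Z \subset \{h_\infty > 0\}$ we obtain a uniform $L^2$-bound on $\partial_x^3 v(s_n)$, so a further diagonal extraction produces $\partial_x^3 v(s_n) \rightharpoonup \partial_x^3 v_\infty$ weakly in $L^2_{\loc}(\{h_\infty > 0\})$. The $C^0(\R)$-convergence of the nonlinear flux is obtained from the pointwise identity derived from the first integration of the stationary equation in Section~\ref{sec:Hamiltonian},
\begin{equation*}
h(s)^3 \partial_x^3 h(s) = g\, h(s)^3\, \partial_x h(s) - M(s)\, \frac{h(s)^2}{(1+h(s))^2}\, \partial_x h(s),
\end{equation*}
whose right-hand side converges uniformly on all of $\R$ by the $C^1$-convergence of $h(s_n)$ together with the scalar convergence $M(s_n) \to M_\infty$, and whose $h^2$, $h^3$ prefactors force the limit to vanish at the rupture points. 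This yields both the continuous extension and the explicit identity claimed for $h_\infty^3 \partial_x^3 h_\infty$.

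Finally, because the pointwise identity above encodes exactly that the flux
\begin{equation*}
h(s)^3\bigl(\partial_x^3 h(s) - g \partial_x h(s)\bigr) + M(s)\, \frac{h(s)^2}{(1+h(s))^2}\, \partial_x h(s)
\end{equation*}
vanishes for every $s$, passing to the limit on $\{h_\infty > 0\}$ shows that the corresponding flux of $h_\infty$ is identically zero there; the integrand in Definition~\ref{def:weak-solution} therefore vanishes pointwise and the weak formulation is satisfied trivially for any admissible test function $\phi$. The main obstacle I anticipate is the convergence of the nonlinear flux through the rupture points, where $\partial_x^3 h(s)$ need not remain bounded; this is resolved precisely by the pointwise identity displayed above, which rewrites the problematic term $h^3 \partial_x^3 h$ as a combination of lower-order quantities that are uniformly controlled in $C^0(\R)$ via the $C^1$-convergence established in the first paragraph.
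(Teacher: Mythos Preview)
Your overall approach mirrors the paper's closely: extract a weakly convergent subsequence via the uniform $W^{2,p}$ bounds, upgrade to $C^1$ convergence by compact embedding, invoke Lemma~\ref{lem:uniform-bound-third-derivative} for the third derivative, and pass to the limit in the flux identity obtained by differentiating \eqref{eq:shifted-steady-state-equation} and multiplying by $h^3$. Your handling of the flux convergence through the rupture points is correct and matches the paper's reasoning.

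There is, however, a genuine gap in your argument that $\{h_\infty = 0\}$ consists \emph{only} of the points $x = \pm\tfrac{\pi}{k_0} + 2j\tfrac{\pi}{k_0}$. You write that since $v_\infty$ is non-decreasing on $[-\tfrac{\pi}{k_0},0)$ and its maximum at $x=0$ is strictly above $-1$, ``these are the only zeros of $h_\infty$.'' But non-decreasing plus a positive maximum does \emph{not} rule out a plateau: $v_\infty$ could in principle equal $-1$ on an interval $[-\tfrac{\pi}{k_0}, -\tfrac{\pi}{k_0}+\delta]$ before increasing. The strict monotonicity from Remark~\ref{rem:nodal-property} applies to $v(s)$, not to the limit $v_\infty$. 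Moreover, the phrase ``Hamiltonian upper bound used in Proposition~\ref{prop:uniform-bound-H2}'' is misplaced---that bound controls $\max v(s)$ from \emph{above}, not from below; the fact that $v_\infty(0) > -1$ follows simply from $v_\infty \in \Xcal$ having vanishing mean.

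The paper closes this gap differently: it uses the uniform bound on $K(s)$ from Lemma~\ref{lem:uniform-bound-M} together with Fatou's lemma to show that $\int |\log(1+v_\infty)|\,dx < \infty$, which forces $\{v_\infty = -1\}$ to have Lebesgue measure zero; since $v_\infty \in \Kcal$, any such set must be an interval containing $\pm\tfrac{\pi}{k_0}$, hence a single point. An alternative fix, implicit in the proof of Proposition~\ref{prop:uniform-bound-H2}, is to pass the quadratic lower bound $v(s_n,x) - v(s_n,x_{\min}) \geq |x - x_{\min}|^2$ to the limit. Either argument would complete your proof.
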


\begin{proof}
    By Proposition \ref{prop:uniform-bound-H2}, \((v(s),M(s))\) is uniformly bounded in \(W^{2,p}_{\mathrm{per}}\times \R\) for all \(p\in [1,\infty)\). Hence, there exists  \(v_{\infty}\in W^{2,p}_{\mathrm{per}}\), for all \(p\in [1,\infty)\), \(M_{\infty} \geq M_l > 0\), and a subsequence (not relabelled) such that
    \begin{equation*}
        (v(s),M(s)) \rightharpoonup (v_{\infty},M_{\infty}) \quad \text{weakly in } W^{2,p}_{\mathrm{per}}\times \R
    \end{equation*}
    for all \(p\in [1,\infty)\). In particular, \(v(s)\) converges to \(v_{\infty}\) strongly in \(C^1(\R)\) since the embedding \(H^2_\mathrm{per}\hookrightarrow C^1(\R)\) is compact. Furthermore, since \(\Xcal\) and \(\Kcal\) are closed with respect to weak convergence in \(H^2_{\mathrm{per}}\) and $v(s) \in \Xcal \cap \Kcal$ for all $s \geq 0$, we obtain \(v_{\infty}\in \Xcal\cap \Kcal\). By Proposition \ref{prop:film-rupture}, we obtain \(\min_{x\in \bigl[-\tfrac{\pi}{k_0},\tfrac{\pi}{k_0}\bigr]} v_{\infty} = -1\) and since \(v_{\infty}\in \Kcal\), we find that the minimum is taken at \(x=\pm \frac{\pi}{k_0}\). In view of \(|K(s)|\) being bounded uniformly by Lemma \ref{lem:uniform-bound-M}, we conclude that
    \begin{equation}\label{eq:relevant-part-of-K}
        \fint_{-\frac{\pi}{k_0}}^{\frac{\pi}{k_0}} \log(1+v(s))\de x
    \end{equation}
    is uniformly bounded. The uniform convergence of \(v(s)\) to \(v_{\infty}\) and Fatou's lemma  imply that
    \begin{equation*}
    \begin{split}
        \int_{\{v_{\infty}<0\}\cap \bigl[-\tfrac{\pi}{k_0},\tfrac{\pi}{k_0}\bigr]} |\log(1+v_{\infty})| \de x & \leq \liminf\limits_{s\to \infty} \int_{\{v_{\infty} <0\}} |\log(1+v(s))| \de x \\
        & \leq \liminf\limits_{s\to \infty} \int_{\bigl[-\tfrac{\pi}{k_0},\tfrac{\pi}{k_0}\bigr]}|\log(1+v(s))| \de x,
    \end{split}
    \end{equation*}
    which is bounded since \eqref{eq:relevant-part-of-K} is uniformly bounded and \(v(s)\) is uniformly bounded from above. This implies that \(v_{\infty}=-1\) on a set of measure zero. Since \(v_{\infty}\in \Kcal\), \(v_{\infty}=-1\) can only hold on an interval around $x = \pm \tfrac{\pi}{k_0}$, hence \(v_{\infty}=-1\) precisely on \(x=\pm \tfrac{\pi}{k_0}\).

    By Lemma \ref{lem:uniform-bound-third-derivative} (after taking another non-relabelled subsequence), we have \(\partial_x^3 v_{\infty} \in L^2_\loc(\{h_{\infty}>0\})\)  and
    \begin{equation*}
        \partial_x^3 v(s) \rightharpoonup \partial_x^3 v_{\infty} \quad \text{weakly in } L^2_\loc(\{h_{\infty}>0\}).
    \end{equation*}
    Differentiating \(\eqref{eq:shifted-steady-state-equation}\) and multiplying with \(h(s)^3 = (1+v(s))^3\), we find that
    \begin{equation*}
        h(s)^3 \partial_x^3 h(s) = gh(s)^3 \partial_x h(s) - M(s)\frac{h(s)^2}{(1+h(s))^2} \partial_xh(s).
    \end{equation*}
    Since \(v(s)\) converges in \(C^1(\R)\) (and so does \(h(s)\)), we conclude that
    \begin{equation*}
        h(s)^3\partial_x^3h(s) \longrightarrow gh_{\infty}^3 \partial_xh_{\infty} - M_{\infty}\frac{h_{\infty}^2}{(1+h_{\infty})^2}\partial_xh_{\infty} \quad \text{strongly in }C^0(\R).
    \end{equation*}
    Since \(h(s)\) converges uniformly to \(h_{\infty}\) and \(\partial_x^3 h(s)\) converges weakly in \(L^2_\loc(\{h_{\infty}>0\})\) to \(\partial_x^3 h_{\infty}\), we further find that
    \begin{equation*}
        h(s)^3 \partial_x^3 h(s) \rightharpoonup h_{\infty}^3 \partial_x^3 h_{\infty} \quad \text{weakly in } L^2_{\loc}(\{h_{\infty}>0\}).
    \end{equation*}
    In particular, 
    \begin{equation*}
        h_{\infty}^3 \partial_x^3 h_{\infty} = gh_{\infty}^3 \partial_xh_{\infty} - M_{\infty}\frac{h_{\infty}^2}{(1+h_{\infty})^2}\partial_xh_{\infty} \quad \text{almost everywhere}.
    \end{equation*}
    We may identify \(h_{\infty}^3 \partial_x^3 h_{\infty}\) with \(gh_{\infty}^3 \partial_xh_{\infty} - M_{\infty}\frac{h_{\infty}^2}{(1+h_{\infty})^2}\partial_xh_{\infty} \) (by changing it potentially on a set of measure zero). Since \(h(s)=1+v(s)\) is a smooth solution to
    \begin{equation*}
        \partial_x\Bigl(h(s)^3 \bigl(\partial_x^3 h(s)-g\partial_x h(s)\bigr) + M(s)\frac{h(s)^2}{(1+h(s))^2}\partial_x h(s)\Bigr) = 0,
    \end{equation*}
    integrating by parts yields
    \begin{equation*}
        \int_{\R} \Bigl(h(s)^3\bigl(\partial_x^3h(s) - g\partial_xh(s)\bigr) + M(s) \frac{h(s)^2}{(1+h(s))^2}\Bigr)\partial_x\phi \de x = 0
    \end{equation*}
    for every \(\phi\in H^1(\R)\) with compact support. Passing to the limit in this integral, we find that \(h_{\infty}\) solves
    \begin{equation*}
        \int_{\R} \Bigl(h_{\infty}^3\bigl(\partial_x^3h_{\infty} - g\partial_xh_{\infty}\bigr) + M_{\infty} \frac{h_{\infty}^2}{(1+h_{\infty})^2}\Bigr)\partial_x\phi \de x = 0
    \end{equation*}
    for every \(\phi\in H^1(\R)\) with compact support. This agrees with Definition \ref{def:weak-solution} since \(\{h_{\infty}=0\}\) is a null set. Hence, the theorem is proved. 
\end{proof}

\section{Stability and instability}\label{sec:stability}

In this section we discuss the (spectral) stability properties of the flat surface and of the bifurcating periodic solutions close to the bifurcation point. Following \cite[Def. 4.1.7]{kapitula2013}, we call a solution $h$ of \eqref{eq:thin-film-equation} \emph{spectrally stable} if the spectrum \(\sigma(\Lcal)\) of the  linearisation \(\Lcal\) about $h$ is contained in the closed left complex half plane, that is \(\sigma(\Lcal)\subset \{\lambda \in \C \,:\, \Re(\lambda)\leq 0\}\). Otherwise, we call \(h\) \emph{spectrally unstable}.

To study the stability of the constant steady state $\bar{h} = 1$, we linearise the thin-film equation \eqref{eq:thin-film-equation} about $\bar{h}$ and obtain the linearised equation
\begin{equation*}
    \partial_t u = \Lcal_M u, \quad \Lcal_M u = -\partial_x \Bigl(\bar{h}^3 (\partial_x^3 u -g \partial_x u) + M \frac{\bar{h}^2}{(1+\bar{h})^2}\partial_x u\Bigr) = -\partial_x^4 u  + \Bigl(g-\frac{1}{4}M\Bigr) \partial_x^2 u
\end{equation*}
The $L^2(\R)$-spectrum of $\Lcal_M$ can thus be obtained by Fourier transform and is given by
\begin{equation}\label{eq:spectrum-LM}
    \sigma_{L^2(\R)}(\Lcal_M) = \Bigl\{-\ell^4+\Bigl(\frac{1}{4}M-g\Bigr)\ell^2\,:\, \ell \in \R\Bigr\}.
\end{equation}

\begin{figure}[H]
    \centering
    \includegraphics[width=0.5\textwidth]{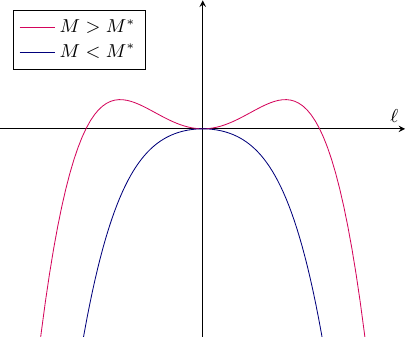}
    \caption{Spectral curves for $\mathcal{L}_M$.}
    \label{fig:spectral-curves}
\end{figure}

For $M \leq M^* = 4g$ we find that
\begin{equation*}
    \sigma_{L^2(\R)}(\Lcal_M) \subset \C_- \cup \{0\},
\end{equation*}
where $\C_- = \{\lambda \in \C \,:\, \Re(\lambda) < 0\}$ denotes the left complex half plane. This implies that $\bar{h} = 1$ is spectrally stable for $M \leq M^*$.
For $M > M^*$ there exists a $\ell_0$ such that
\begin{equation*}
    -\ell^4 + \Bigl(\frac{1}{4}M-g\Bigr)\ell^2 > 0
\end{equation*}
for all $\ell \in (-\ell_0,\ell_0) \setminus \{0\}$, see Figure \ref{fig:spectral-curves}. We point out that this type of destabilisation is also referred to as a \emph{(conserved) long-wave instability}, see \cite{cross1993} (where it is referred to as type \(\mathrm{II}_s\) instability) and \cite{schneider2017}.

In this situation, the dynamics of \eqref{eq:thin-film-equation} for $M > M^*$ close to $M^*$ is formally determined by a Cahn--Hilliard-type equation, which can be found by a weakly nonlinear analysis using a multiple-scaling expansion. Indeed, letting $\varepsilon^2 = M - M^*$ and inserting the ansatz
\begin{equation*}\label{eq:ansatz}
    h(t,x) = 1 + \eps^2 V(T,X), \quad T = \eps^4 t, X = \eps x
\end{equation*}
into \eqref{eq:thin-film-equation}, we find by equating the different powers of $\eps$ to zero that $V$ formally satisfies
\begin{equation}\label{eq:amplitude-equation}
    \partial_T V = - \partial_X^4 V - \partial_X^2 V - 2g \partial_X(V\partial_X V)
\end{equation}
to lowest order in $\eps$, see Appendix \ref{sec:appendix} and also \cite{schneider1999,doelman2009,shklyaev2017}.

We point out that the amplitude scaling in the ansatz is consistent with the amplitude scaling found on the local bifurcation curve, see Corollary \ref{cor:expansion}. Indeed, to guarantee that $\eps^2 = M - M^*$ we need to assume that $k_0 = \Ocal(\eps)$, which also yields that $\delta^2 = M^*(k_0)-M(s)$ is of order $\eps^2$. Inserting this into the expansion \eqref{eq:expansion-bifurcation} we find that the bifurcating periodic solutions have amplitude of order $\eps^2$.

Equation \eqref{eq:amplitude-equation} is known as \emph{Sivashinsky equation}, see \cite{sivashinsky1983}. It exhibits a family of stationary periodic solutions and we refer to \cite[Sec. 3.2.2.6]{shklyaev2017} for details. Additionally, the trivial steady state $V = 0$ of \eqref{eq:amplitude-equation} is unstable, but due to the negative sign of the nonlinearity, the dynamic does not saturate at a nontrivial steady state and the equation has finite-time blow up in the following sense. Fix $k > 0$. Then there exists initial data $V_0$, which are $2\pi/k$-periodic and arbitrary small in $H^3_\mathrm{per}$, such that the corresponding solution to the Sivashinsky equation \eqref{eq:amplitude-equation} blows up in finite time. To be precise, there exists a $T^\ast > 0$ such that the solution satisfies $\int_0^t \| \partial_{X}^2 V \|_{L^\infty}^2 \de t \rightarrow \infty$ as $t \rightarrow T^\ast$.
For details we refer to \cite[Theorem 3]{bernoff1995} with $\lambda = 1$ and $\sigma = - 1$. We point out that this seems consistent with the stationary results in Theorem \ref{thm:main-film-rupture}, which have an unbounded second derivative.

The physical interpretation of this finite time blow-up is the formation of film rupture in finite time, which can be observed in experiments, see for example \cite{krishnamoorthy1995,vanhook1995,vanhook1997}.

\subsection{Spectral instability of periodic solutions}

We study the spectral instability of the periodic solutions to \eqref{eq:thin-film-equation}, which were constructed in Theorem \ref{thm:local-bifurcation}. Let $k_0>0$ be a fixed wave number. Then a family of periodic periodic solutions $h_\mathrm{per}=1+v$ with period $\tfrac{2\pi}{k_0}$ bifurcates subcritically from the trivial, flat solution $\bar{h} = 1$ at $M^*(k_0) = M^\ast + 4 k_0^2$. In particular, using \eqref{eq:expansions} it satisfies the following expansion
\begin{equation*}
    h_\mathrm{per}(x) = 1 + s\cos(k_0 x) + \tau(s)
\end{equation*}
for $s > 0$ and it solves the thin-film equation \eqref{eq:thin-film-equation} with Marangoni number $M(s) = M^*(k_0) + \Ocal(s^2)$. By Remark \ref{rem:smallness-regularity} we find that $\|\tau(s)\|_{C^4} = \Ocal(s^2)$.

For $s > 0$ sufficiently small, we linearise \eqref{eq:thin-film-equation} about $h_\mathrm{per}$ and obtain the linearisation
\begin{equation*}
    \Lcal_{\mathrm{per},M} u = -\partial_x \Bigl(h_\mathrm{per}^3 (\partial_x^3 u -g \partial_x u) + 3h_\mathrm{per}^2 (\partial_x^3 h_\mathrm{per} -g \partial_x h_\mathrm{per}) u + M \frac{h_\mathrm{per}^2}{(1+h_\mathrm{per})^2}\partial_x u + M \frac{2h_\mathrm{per}\partial_x h_\mathrm{per}}{(1+h_\mathrm{per})^3} u\Bigr).
\end{equation*}
The expansion of $h_\mathrm{per}$ in $s$ yields that
\begin{equation*}
    \Lcal_{\mathrm{per},M} u = \Lcal_M u + s \Rcal u, \quad \Rcal u = \alpha_4(x,s) \partial_x^4 u + \alpha_3(x,s) \partial_x^3 u + \alpha_2(x,s) \partial_x^2 u + \alpha_1(x,s) \partial_x u + \alpha_0(x,s) u.
\end{equation*}
Here, the coefficients $\alpha_j$, $j = 0,\ldots,4$ are uniformly bounded in $L^\infty(\R)$ for $s > 0$ sufficiently small, where we use that $h_\mathrm{per} = 1 + s \cos(k_0 x) + \tau(s)$ and $\tau = \Ocal(s^2)$ in $C^4(\R)$. Since $\Rcal$ is a fourth-order operator with bounded coefficients, it is $\Lcal_{\mathrm{per},M}$-bounded in the sense that there are constants $a, b > 0$ such that 
\begin{equation}\label{eq:Lper-boundedness}
    \|\Rcal u \|_{L^2(\R)} \leq a \|u\|_{L^2(\R)} + b \|\Lcal_{\mathrm{per},M} u\|_{L^2(\R)}.
\end{equation}
Indeed, we have that the highest-order term of $\Lcal_{\mathrm{per},M}$ is given by $h_\mathrm{per}^3 \partial_x^4 u$ and $h_\mathrm{per}$ is uniformly positive for $s > 0$ sufficiently small. Therefore, the inequality \eqref{eq:Lper-boundedness} follows by standard Sobolev interpolation estimates.
In particular, since the lower bound on $h_\mathrm{per}^3$ and the upper bounds on $\alpha_j$, $j = 0,\ldots,4$, are uniform in $s$ for $s > 0$ sufficiently small, the constants $a,b$ in \eqref{eq:Lper-boundedness} can be chosen uniformly in $s$.

We now argue that $\Lcal_{\mathrm{per},M}$ has unstable spectrum for $s > 0$ sufficiently small. First, we note that since $\Lcal_M = \Lcal_{\mathrm{per},M} - s \Rcal$, where $s\Rcal$ is $\Lcal_{\mathrm{per},M}$-bounded with constants $\tilde{a}(s) = sa$ and $\tilde{b}(s) = sb$, $\Lcal_{\mathrm{per},M}$ is closed by \cite[Thm. IV.1.1]{kato1995}. Next, we use \cite[Thm. IV.2.17]{kato1995} to find that the distance of the graphs of $\Lcal_M$ and $\Lcal_{\mathrm{per},M}$ is of order $s$ for $s > 0$ sufficiently small. Finally, \cite[Thm. IV.3.1]{kato1995} implies that there exists $s_0 = s_0(k_0) > 0$ sufficiently small such that for every \(0<s < s_0\) there exists a $\lambda_0 > 0$ which lies in the $L^2(\R)$-spectrum of $\Lcal_{\mathrm{per},M}$. In particular, \(h_\mathrm{per}\) is spectrally unstable in $L^2(\R)$.

Indeed, assume that such \(\lambda_0>0\) does not exist. Then \(\sigma_{L^2}(\Lcal_{\mathrm{per},M})\cap [0,k_0^2]=\varnothing\). But now, if \(s<s_0\) and \(s_0\) is small enough, \cite[Thm. IV.3.1]{kato1995} would imply that \([0,k_0^2]\) lies in the resolvent set of \(\Lcal_M\), which contradicts \eqref{eq:spectrum-LM} since $M(s) > M^*$ for $s < s_0$.
This proves the following theorem.

\begin{theorem}[Instability]\label{thm:instability}
   Let \(k_0>0\) and 
    \begin{equation*}
        \{(v(s),M(s)) : s \in (-\eps,\eps)\} \subset \Ucal \times \R
    \end{equation*}
    the bifurcation branch obtained in Theorem \ref{thm:local-bifurcation}. Then, there exists \(s_0>0\) such that \(h(s) = 1+v(s)\) is a $L^2$-spectrally unstable solution to \eqref{eq:thin-film-equation} for every \(|s|<s_0\).
\end{theorem}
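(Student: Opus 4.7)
The plan is to view the linearisation $\Lcal_{\mathrm{per},M(s)}$ about the small bifurcating periodic solution $h(s)=1+v(s)$ as an analytic perturbation of the linearisation $\Lcal_{M(s)}$ about the constant state $\bar h=1$, and to transfer the long-wave instability of $\Lcal_{M(s)}$ recorded in \eqref{eq:spectrum-LM} to $\Lcal_{\mathrm{per},M(s)}$ via Kato's relative-boundedness perturbation theory, see \cite{kato1995}.

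First I would linearise \eqref{eq:thin-film-equation} around $h(s)$ and use the expansion $h(s)=1+s\cos(k_0 x)+\tau(s)$ with $\tau(s)=\Ocal(s^2)$ in $C^4(\R)$ (which follows from the elliptic bootstrap in Remark \ref{rem:smallness-regularity}) to decompose
\begin{equation*}
    \Lcal_{\mathrm{per},M(s)} = \Lcal_{M(s)} + s\Rcal_s,
\end{equation*}
where $\Rcal_s=\sum_{j=0}^{4}\alpha_j(x,s)\partial_x^j$ is a fourth-order differential operator whose coefficients $\alpha_j$ are uniformly bounded in $L^\infty(\R)$ for small $s$. Since the leading coefficient $h(s)^3$ of $\Lcal_{\mathrm{per},M(s)}$ stays uniformly bounded below by a positive constant for small $s$, a standard Sobolev interpolation estimate yields
\begin{equation*}
    \|\Rcal_s u\|_{L^2(\R)} \leq a\|u\|_{L^2(\R)} + b\|\Lcal_{\mathrm{per},M(s)}u\|_{L^2(\R)}
\end{equation*}
with constants $a,b$ independent of $s$, so that $s\Rcal_s$ is $\Lcal_{\mathrm{per},M(s)}$-bounded with arbitrarily small relative bound $sb$ as $s\to 0$.

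Next I would invoke Kato's framework: \cite[Thm.~IV.1.1]{kato1995} ensures closedness of $\Lcal_{\mathrm{per},M(s)}$, and \cite[Thm.~IV.2.17]{kato1995} yields that the gap between the graphs of $\Lcal_{M(s)}$ and $\Lcal_{\mathrm{per},M(s)}$ is of order $s$. I would then argue by contradiction: if no $\lambda_0>0$ lay in $\sigma_{L^2(\R)}(\Lcal_{\mathrm{per},M(s)})$, then a compact subinterval of $(0,\infty)$ such as $[0,k_0^2]$ would lie entirely in its resolvent set, and \cite[Thm.~IV.3.1]{kato1995} would force the same interval into the resolvent set of $\Lcal_{M(s)}$ for $s$ small enough. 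This contradicts the explicit Fourier-symbol description \eqref{eq:spectrum-LM}, since the expansion \eqref{eq:expansions} gives $M(s)=M^{\ast}(k_0)+\Ocal(s^2)>M^{\ast}=4g$ for $|s|$ sufficiently small, and hence $[0,k_0^2]$ intersects $\sigma_{L^2(\R)}(\Lcal_{M(s)})$ nontrivially.

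The main obstacle is to obtain all estimates uniformly in $s$ on a common parameter interval: the constants $a,b$ in the relative-boundedness inequality and the positive lower bound on $h(s)^3$ must not degenerate as $s\to 0$, so that Kato's perturbation lemmas apply with a single threshold $s_0>0$. This uniformity is precisely what is delivered by the quantitative bound $\tau(s)=\Ocal(s^2)$ in $C^4(\R)$ from Remark \ref{rem:smallness-regularity} together with the uniform convergence $h(s)\to 1$ as $s\to 0$.
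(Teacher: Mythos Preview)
Your proposal is correct and follows essentially the same route as the paper: the same decomposition $\Lcal_{\mathrm{per},M(s)}=\Lcal_{M(s)}+s\Rcal_s$ with $\Rcal_s$ relatively bounded (using $\tau(s)=\Ocal(s^2)$ in $C^4$ from Remark~\ref{rem:smallness-regularity}), the same three results from \cite{kato1995} (Thms.~IV.1.1, IV.2.17, IV.3.1), and the same contradiction via the interval $[0,k_0^2]$ and the Fourier symbol \eqref{eq:spectrum-LM}. Your emphasis on the uniformity of the constants in $s$ is exactly the point the paper also stresses.
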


\begin{remark}
    After this work was published, we found that instability results for a class of generalised Cahn--Hilliard equations that includes our model \eqref{eq:thin-film-equation} have been discussed in \cite{laugesen2000} and \cite{laugesen2002}. There, the authors show linear and energy instability of positive periodic stationary solutions for equations of the form
    \begin{equation*}
        \partial_t h + \partial_x\bigl(F(h)\partial_x^3 h + G(h)\partial_x h\bigr) = 0
    \end{equation*}
    under co-periodic perturbations provided that  $F>0$ and $G/F$ is strictly convex. In fact, these conditions hold in our case showing that the spatially positive periodic solutions constructed in Theorem \ref{thm:global-bifurcation} are linearly and energetically unstable along the whole bifurcation curve. This does not show the instability of the film-rupture state constructed in Theorem \ref{thm:main-film-rupture} which remains an open question.
\end{remark}


\section*{Acknowledgements}

B.H.~acknowledges the support of the Swedish Research Council (grant no 2020-00440).

\section*{Data Availability Statement}

The data generated for the numerical plot of the global bifurcation branch in Figure \ref{fig:numerical-continuation-plots} was obtained using the pde2path library, which can be found on \cite{zotero-3112}. The code used to generate the corresponding data is available under \url{https://github.com/Bastian-Hilder/global-bif-thermocapillary-thin-film-equation}.

\printbibliography

\appendix

\section{Formal derivation of the amplitude equation}\label{sec:appendix}

We provide the formal derivation of the amplitude equation \eqref{eq:amplitude-equation}. Therefore, we set \(\eps^2 = M-M^*\) and insert the ansatz
\begin{equation*}
    h(t,x) = 1 + \eps^2 V(T,X), \quad T = \eps^4 t, X = \eps x
\end{equation*}
into the thin-film equation \eqref{eq:thin-film-equation} and use that
\begin{equation*}
    \partial_t h = \eps^6 \partial_T V, \quad \partial_x^jh = \eps^{2+j} \partial_X V.
\end{equation*}
Rewriting \eqref{eq:thin-film-equation} via its linearisation about the constant steady state
\begin{equation*}
    \partial_t h = \Lcal_M h + \Ncal(h),
\end{equation*}
with linearisation and nonlinearity given by
\begin{equation*}
    \begin{split}
        \Lcal_M h = - \partial_x^4 h + \bigl(g-\frac{1}{4}M\bigr)\partial_x^2 u, \quad \Ncal(h) = -\partial_x\bigl((h^3-1)(\partial_x^3h-g\partial_xh) + M \bigl(\frac{h^2}{(1+h)^2}-1\bigr)\partial_x h \bigr),
    \end{split}
\end{equation*}
we obtain
\begin{equation*}
    \Lcal_M h = - \eps^6 \partial_X^4 V - \eps^6 \partial_X^2 V,
\end{equation*}
where we have used that \(M= M^* + \eps^2 = 4g + \eps^2\). Furthermore, using the Taylor expansion, it is
\begin{equation*}
    \Ncal(h) = \Ncal(1+v) = - \partial_x \bigl( (3v+3v^2+v^3)(\partial_x^3v-g\partial_xv) + M\bigl(\tfrac{v}{4} - \tfrac{v^2}{16} + \tfrac{v^4}{64} + \Ocal(|v|^5) \bigr)\partial_xv\bigr).
\end{equation*}
Inserting the ansatz, we find
\begin{equation*}
\begin{split}
    \Ncal(h) & = -\eps \partial_X\bigl( (3\eps^2V + \Ocal(\eps^4))(\eps^5\partial_X^3V - g \eps^3\partial_XV) + \tfrac{4g + \eps^2}{4} (\eps^2V +\Ocal(\eps^4))\eps^3\partial_XV\bigr) \\
    & = \eps^6 2g\partial_X(V\partial_XV) + \Ocal(\eps^8).
\end{split}
\end{equation*}
Hence, 
\begin{equation*}
    \eps^6\partial_T V = -\eps^6\bigl(\partial_X^4 V + \partial_X^2 V + 2g\partial_X(V\partial_XV)\bigr) + \Ocal(\eps^8).
\end{equation*}
Dividing by \(\eps^6\) and sending \(\eps\) to $0$, we obtain \eqref{eq:amplitude-equation}.

\vspace{2.5cm}

\end{document}